\newtheorem{theorem}{Theorem}[section] 
\newtheorem{corollary}[theorem]{Corollary} 
\newtheorem{proposition}[theorem]{Proposition} 
\newtheorem{lemma}[theorem]{Lemma}
\theoremstyle{definition}
\newtheorem{definition}[theorem]{Definition}
\newtheorem{example}[theorem]{Example}
\newtheorem{remark}[theorem]{Remark}
\DeclareMathOperator{\val}{\nu}
\DeclareMathOperator{\initial}{in}
\DeclareMathOperator{\lc}{lc}
\DeclareMathOperator{\rk}{rk}
\DeclareMathOperator{\Trop}{Trop}
\DeclareMathOperator{\Tropw}{Trop_\mathcal{C}^+}
\DeclareMathOperator{\Log}{Log}
\DeclareMathOperator{\rowspan}{rowspan}
\DeclareMathOperator{\Id}{Id}
\DeclareMathOperator{\diag}{diag}
\DeclareMathOperator{\Cone}{Cone}
\DeclareMathOperator{\Conv}{Conv}
\DeclareMathOperator{\relint}{relint}
\DeclareMathOperator{\mult}{mult}
\newcommand{\GG}{\mathbb{G}}
\newcommand{\Z}{\mathbb{Z}}
\newcommand{\R}{\mathbb{R}}
\newcommand{\C}{\mathbb{C}}
\newcommand{\RR}{\mathcal{R}}
\newcommand{\CC}{\mathcal{C}}
\newcommand{\LL}{\mathcal{L}}
\newcommand{\inw}{\operatorname{in}_w}
\title{Computing positive tropical varieties and lower bounds on the number of positive roots}
\author[1]{Kemal Rose} 
\author[2]{Máté L. Telek \thanks{corresponding author}}
\affil[1]{KTH Royal Institute of Technology, Stockholm, Sweden. kemalr@kth.se}
\affil[2]{Max Planck Institute for Mathematics in the Sciences, Leipzig, Germany. mate.telek@mis.mpg.de}
\date{} 
\begin{document}
\maketitle

\begin{abstract}
We present two effective tools for computing the positive tropicalization of an algebraic variety. First, we outline conditions under which the initial ideal can be used to compute the positive tropicalization, offering a real analogue to the Fundamental Theorem of Tropical Geometry. Additionally, under certain technical assumptions, we provide a real version of the Transverse Intersection Theorem. Building on these results, we propose an algorithm to compute a combinatorial bound on the number of positive real roots of a system of parametrized polynomial equations. Furthermore, we discuss how this combinatorial bound can be applied to study the number of positive steady states of chemical reaction networks.
    \vskip 0.1in
	
	\noindent
	{\bf Keywords:} tropical geometry, real algebraic geometry,  Viro's patchworking
\end{abstract}

\section{Introduction}

Systems of polynomial equations are fundamental in various scientific disciplines, serving as powerful tools to model and solve real-world problems. Methods from applied algebraic geometry \cite{cox2005using} and nonlinear algebra \cite{MS_invitation,NonLinAlgPaper} are used to study the \emph{algebraic varieties} defined by these polynomials. 
Tropical geometry provides additional tools, which simplify the study of solution sets by transforming curvy algebraic varieties into piece-wise linear polyhedral complexes, known as \emph{tropical varieties}. 
Nowadays, tropical geometry is a well-established field commonly used to study solutions of polynomial equations over algebraically closed fields~\cite{maclagan2015introduction,TropAlgGeo_book}. For example, methods from tropical geometry are employed in general-purpose solvers in numerical algebraic geometry~\cite{HuberSturmfels,LEYKIN2019173, HCBreiding,HelminckHenrikssonRen}.

Many scientific applications lead to the study of positive solutions of polynomial equations. This includes scattering amplitudes in particle physics \cite{lam2022invitationpositivegeometries,ArkeniHamedLam}, statistical models in phylogenetics~\cite{PhyloGen}, determining steady states in chemical reaction networks~\cite{DickensteinInvitation,FeliuRoleofAlg}, and identifying Nash equilibria in game theory~\cite{GameTheory}. These scientific problems motivate the development of tools for studying positive solutions. Recently, there has been increasing attention for the tropicalization of the positive part of algebraic varieties \cite{SpeyerWilliams::PosGrassmannian,ARDILA2006577,Tabera::Bases,Brandenburg::TropicalPositivity,BENDLE2024102224,akhmedova2023tropicalamplituhedron}, and more generally, of semi-algebraic sets \cite{Alessandrini::LogLimit,Allamigeon::TropSpecta,JellScheidererYu::RealTrop,Blekherman::Moments}.

The computation of positive tropicalization is challenging and only feasible in specific cases. We identify two main technical obstacles. On the one hand, there is a lack of tools to certify containment of a vector in the positive tropicalization. On the other hand, tropicalization in general does not commute with forming intersections. Specifically, the tropicalization of the intersection of two varieties might be strictly contained in the
intersection of their tropicalizations.
Over algebraically closed base-fields, the first obstruction is addressed by the Fundamental Theorem of Tropical Geometry \cite[Theorem 3.2.3]{maclagan2015introduction}, which characterizes containment in the tropicalization in terms of initial ideals.
The second obstruction is addressed by the Transverse Intersection Theorem \cite{BOGART200754,LiftingInters,maclagan2015introduction},
which provides a genericity condition under which the tropicalization of the intersection of two varieties is the intersection of their tropicalizations. Under a restrictive technical assumption, in this paper we prove real analogues of both statements.
\Cref{thm: multiplicity one cones are positive} is an analogue of 
the Fundamental Theorem of Tropical Geometry
 and \Cref{thm: weakly positive transverse intersection} is an analogue of the Transverse Intersection Theorem.

Building on these results, we investigate polynomial equation systems with coefficients in a real closed field $\mathcal{R}$ with a non-Archimedean valuation.
In \Cref{thm: tropical equation solving}, we tropicalize the set of positive solutions and give an explicit combinatorial description under appropriate genericity conditions.
As a special case, for a system of $n$ equations in $n$ variables $x_1, \dots ,x_n$
\begin{align}
\label{Eq:FREE}
 f_i(x) = \sum_{j=1}^r c_{i,j} x^{\alpha_j} = 0, \qquad i = 1, \dots, n
\end{align}
we obtain a combinatorial lower bound on the number of positive solutions.
These lower bounds are expressed as the intersection $\Trop^+(\ker(C)) \cap \rowspan(A)$ of the positive tropicalization of a linear space $\Trop^+(\ker(C))$ with the rowspan of $A$.
Here,  $C = (c_{i,j}) \in \mathcal{R}^{n \times r}$ denotes the coefficient matrix of \eqref{Eq:FREE} and the matrix $A \in \mathbb{Z}^{n \times r}$ contains the exponents $\alpha_1, \dots , \alpha_r$ as columns.

In Section~\ref{Sec::LinearSpaces}, we present an algorithm for computing the positive tropicalization of linear spaces, such as $\Trop^+(\ker(C))$. We also discuss how this algorithm can be implemented in \texttt{Oscar.jl} \cite{OSCARbook}, thereby making the combinatorial bound on the number of positive solutions to~\eqref{Eq:FREE} computable.

As a direct consequence, for sufficiently small values of $t> 0$, we provide an algorithm for computing lower bounds on the number of positive real roots of a polynomial equation system of the form
\begin{align}
\label{Eq:System}
 N \diag(t^h) x^A = 0,
\end{align}
 where $N  \in \mathbb{R}^{n\times r}$, $h \in \mathbb{R}^r$, $t^h = (t^{h_1},\dots,t^{h_r})$, $A = (\alpha_1 | \dots| \alpha_r)  \in \mathbb{Z}^{n \times r}$, and $x^A = (x^{\alpha_1}, \dots x^{\alpha_r})^\top$. Following \cite{RenHelminck,FeliuDim}, we call the equation system \eqref{Eq:System} \emph{vertically parametrized}.
This name reflects that the coefficient matrix $N \diag(t^h)$ is obtained from $N$ by multiplying its $j$-th column by the same parameter $t^{h_j}$. In Theorem~\ref{Prop::LowerBoundVertic}, we show that for generic $h \in \mathbb{R}^r$ and sufficiently small $t > 0$, the number of positive real solutions of~\eqref{Eq:System}  is at least the cardinality of
\begin{align}
\label{Eq:PosTropInt}
 (\Trop^+(\ker(N))-h) \cap \rowspan(A).
\end{align}

We view Theorem~\ref{Prop::LowerBoundVertic} as an extension of Viro's method \cite{Viro_Dissertation, Bernd::Patchworking} to vertically parameterized systems. In Remark~\ref{Remark:ViroforVertParam}, we discuss why Viro's method cannot be directly applied to vertically parameterized systems and how Theorem~\ref{Prop::LowerBoundVertic} overcomes this obstacle. Theorem~\ref{Prop::LowerBoundVertic} can also be seen as a real version of \cite[Proposition 6.5]{RenHelminck}, where the authors showed that the generic root count of~\eqref{Eq:System} in  $(\mathbb{C}^*)^n$ is given by a tropical intersection number, which generalizes the well-known BKK-bound~\cite{Bernstein,Kushnirenko}. It should also be noted that in \cite{PolyhedralMethodSparsePositive}, a bound on the number of positive real solutions of~\eqref{Eq:System}, for small values of $t > 0$, was provided in terms of \emph{positively decorated simplices} of a regular triangulation of the columns of~$A$. In \Cref{Prop:InjMapPosDec}, we prove that the lower bound given by the number of positively decorated simplices is always at most the number of points in \eqref{Eq:PosTropInt}. Moreover, in Example~\ref{Eq:StircExample}  we present a vertically parametrized system where strict inequality occurs.

One of the main motivations for studying equation systems of the form~\eqref{Eq:System} comes from chemical reaction network theory \cite{DickensteinInvitation, FeliuRoleofAlg}, where the steady states of the network are given by vertically parametrized systems augmented with some linear equations, called conservation laws. In Section~\ref{Sec:CRNT}, we showcase how Theorem~\ref{Prop::LowerBoundVertic} can be applied in this setting. A comprehensive study of applying Theorem~\ref{Prop::LowerBoundVertic} to biologically relevant reaction networks will be part of future work.

The paper is organized as follows. In Section~\ref{Sec::RealTrop}, we review notions and techniques from tropical geometry and discuss fields with non-Archimedean valuation. In Section~\ref{Section:PositiveTrop}, we focus on positive tropicalization and provide a real analogue of the Fundamental Theorem of Tropical Geometry (\Cref{thm: multiplicity one cones are positive}). In Section~\ref{Sec::Comp}, we discuss how to compute the positive tropicalization of algebraic varieties, in particular linear spaces such as $\Trop^+(\ker(N))$, and prove a real version of the Transverse Intersection Theorem (Theorem~\ref{thm: weakly positive transverse intersection}). In Section~\ref{Sec:PosSolsTrop}, we apply tropical methods to study the positive solutions of a polynomial equation system as in~\eqref{Eq:FREE} with coefficients in a real closed field. We conclude the paper in Section~\ref{Section::SparsePolys}, where we provide lower bounds for vertically parametrized systems, compare this result with the bound based on positively decorated simplices from~\cite{PolyhedralMethodSparsePositive}, and using our tropical method we compute lower bounds on the number of positive steady states of reaction networks.

\paragraph{\textbf{Notation. }}
For a field $K$, we write $K^* = K \setminus \{0\}$. We denote by $V(I)$ the very affine vanishing locus of an ideal $I \subseteq K[x_1^\pm, \dots, x_n^\pm]$. If the ideal $I$ is generated by polynomials $f_1, \dots ,f_k$, we write $V(f_1,\dots,f_k) = V(I)$. For variables $x_1, \dots , x_n$ and $\alpha \in \mathbb{Z}^n$, we use the shorthand notation $x^\alpha = x_1^{\alpha_1}\dots x_n^{\alpha_n}$. For two vectors $v,w \in \mathbb{R}^n$, $v\cdot w$ denotes the Euclidean scalar product, and $v \ast w$ the  coordinate-wise Hadamard product. The transpose of a matrix $M$ will be denoted by $M^\top$.

\section{Background on tropical geometry}
\label{Sec::RealTrop}
\subsection{Non-Archimedean valued fields}

To define the tropicalization of an algebraic variety, we work over a field equipped with a non-Archimedean valuation. To set the stage, we begin by discussing such fields and two slightly different notions of positivity.
\color{black}
Throughout, $\RR$ denotes a real closed field with a non-trivial, non-Archimedean valuation $\nu \colon \mathcal{R}^* \to \mathbb{R}$ that is compatible with the order of $\RR$. That is,
\[\forall a, b \in \RR \ : \ 0 \leq  a \leq b \implies \nu(a) \geq \nu(b).\] 
We denote by $\RR_{>0}$ the set of positive elements of $\RR$ and by 
\[ \RR^\circ := \big\{ x \in \RR \mid  \nu(x) \geq 0 \big\} \quad \text{and} \quad \RR^{\circ \circ} := \big\{ x \in \RR \mid  \nu(x) > 0 \big\} \]
the valuation ring and its unique maximal ideal. The residue field $\RR^\circ/ \RR^{\circ \circ}$ is denoted by~$\widetilde{\mathcal{R}}$. Note that the ordering of $\RR$ induces an ordering of $\widetilde{\mathcal{R}}$.
We write $\overline{x}$ for the image of $x \in \RR^\circ$ under the canonical map $\RR^\circ \to \widetilde{\mathcal{R}}$.

The valuation of $\RR$ uniquely extends to its algebraic closure $\CC$, whose residue field  will be denoted as  $\widetilde{\CC}$.
Since $\mathcal{C}$ is algebraically closed, there exists a section of the valuation map \cite[Lemma 2.1.15]{maclagan2015introduction}, that is, there exists a map $\psi\colon (\nu(\mathcal{C}^*),+) \to (\mathcal{C}^*,\cdot)$ such that $\val(\psi(w)) = w$ for all $w \in \nu(\mathcal{C}^*)$. We denote the element $\psi(w) \in \mathcal{C}^*$ by $t^w$. The section $\psi$ can be choosen such that its restriction to $\nu(\mathcal{R}^*)$ induces a section  $(  \val(\mathcal{R}^* ), +) \to (\mathcal{R}_{>0}, \cdot ), \ w \mapsto t^w$.

The reader might think of $\mathcal{R}$ as the field of \emph{formal real Puiseux series} ~$\mathbb{R}\{\!\{t \}\!\}$ and its algebraic closure $\mathcal{C} = \mathbb{C}\{\!\{t \}\!\}$, the field of \emph{formal complex Puiseux series}. These fields have commonly been used in the literature to study positive tropicalization \cite{SpeyerWilliams::PosGrassmannian,Tabera::Bases,Brandenburg::TropicalPositivity}. 

To be concrete, we recall that a non-zero \emph{formal Puiseux series} is a series of the form
\[ x(t)=\sum_{k=k_0}^{\infty} c_k t^{k / N} \text { for some } k_0 \in \mathbb{Z},\, N \in \mathbb{N},\, c_k \in \mathbb{C}, \, c_{k_0} \neq 0. \]
Its valuation is defined as the smallest exponent of $t$ in $x(t)$ with non-zero coefficient, that is, $\val(x(t)) = \tfrac{k_0}{N}$. The coefficient of the term with smallest exponent is called the \emph{leading coefficient} of $x(t)$, denoted by $\lc (x(t)) = c_{k_0}$.

A formal Puiseux series is \emph{real} if all the coefficients $c_k$ are real numbers. It is known that the field of formal real Puiseux series is a real closed field \cite[Theorem 2.91]{basu2007algorithms}. An element $x(t) \in \mathbb{R}\{\!\{t \}\!\}^*$ is \emph{positive} if $\lc(x(t))$ is positive. In \cite{SpeyerWilliams::PosGrassmannian}, the authors considered a slightly different notion of positivity for the elements of $\mathbb{C}\{\!\{t \}\!\}$. In their definition, an element $x(t) \in \mathbb{C}\{\!\{t \}\!\}$ is positive if $\lc(x(t))$ is real and positive. In the current paper, we call such elements \emph{weakly positive}. Note that for weakly positive $x(t)$, only the leading coefficients must be real; the other coefficients can be complex numbers.

We generalize the notion of weak positivity to any real closed field $\RR$ with a compatible non-trivial, non-Archimedean valuation. We say that an element $x \in \CC$ in the algebraic closure is \emph{weakly positive} if the reduction of $y = t^{-\nu(x)}x \in \CC^\circ$ is real and positive, that is, $\overline{y} \in \widetilde{\RR}_{>0}$.
This definition does not depend on the choice of the section $t^{-\nu(x)} \in \RR_{>0}$, since a different choice replaces the reduction $\overline{y}$ with a positive multiple.
We denote by $\CC_{>0}$ the set of weakly positive elements of $\CC$. By definition, we have
\begin{align}
\label{Eq:DiffPosIncl}
\RR_{>0} \subsetneq \CC_{>0}.
\end{align}

Throughout the paper 
we will work over different base-fields. 
 In Section~\ref{Sec::LinearSpaces}, we will use $\mathbb{R}\{\!\{t \}\!\}$  to compute the positive tropicalization of linear spaces defined over $\mathbb{R}$. 
 In Section~\ref{Section::SparsePolys}, we will work with the \emph{Hardy field} $H(\mathbb{R}_{\text{an}^*})$, as it allows translating statements defined over $H(\mathbb{R}_{\text{an}^*})$ to statements over $\mathbb{R}$. For example, a polynomial equation system has the same number of positive solutions in $H(\mathbb{R}_{\text{an}^*})$  as the number of positive solutions of the induced system in $\mathbb{R}$ when evaluating the coefficients of the polynomials for small values of~$t$ (cf. Theorem~\ref{Thm::NumOfSols}). In the context of tropicalization of semi-algebraic sets, the Hardy field appeared in \cite{Alessandrini::LogLimit}.

The \emph{Hardy field} $H(\mathbb{R}_{\text{an}^*})$ \cite[Section 4.1]{Alessandrini::LogLimit} is a modified version of the field of formal real Puiseux series, and is a well-studied object in model theory and o-minimal geometry. 
In Appendix \ref{AppendixA}, we provide a brief introduction to non-experts in o-minimal geometry, and for further details, we refer to \cite{Alessandrini::LogLimit,Ebbinghaus::Logic}. Here, we proceed by describing the elements of $H(\mathbb{R}_{\text{an}^*})$. A \emph{generalized real Puiseux series} is a power series of the form
\begin{align}
\label{Eq::GenRealPuisSeries}
 x(t) = t^r \sum_{\alpha \in \mathbb{R}_{\geq 0}} c_\alpha t^\alpha, \text{ where } r \in \mathbb{R}, \, c_\alpha \in \mathbb{R}, \, c_0 \neq 0.
\end{align}
 We say that $x(t)$ is (absolutely) \emph{locally convergent} if there exists $\epsilon \in \mathbb{R}_{>0}$ such that 
\[ \sum_{\alpha \in \mathbb{R}_{\geq 0}} \lvert c_\alpha \rvert \epsilon^\alpha < \infty.\]
Here $ \lvert c_\alpha \rvert$ denotes the usual Archimedean absolute value of $c_\alpha \in \mathbb{R}$.
The valuation of $x(t)$ is $\nu(x(t)) := r$, and $x(t)$ is defined to be positive, $x(t) > 0$, if its leading coefficient $c_0$ is positive. Two locally convergent generalized real Puiseux series $x(t),\, y(t)$ are equivalent if and only if there exists $\epsilon \in \mathbb{R}_{>0}$ such that for all $\tau \in (0,\epsilon)$ both $x(\tau)$ and $y(\tau)$ converge, and $x(\tau) = y(\tau)$. The Hardy field, denoted by $H( \mathbb{R}_{\text{an}^*})$, is the set of the equivalence classes $[x(t)]$ of such locally convergent generalized real Puiseux series under this relation.

The Hardy field possesses several favorable properties that make it a convenient choice for real tropical geometry.
It is known that $H( \mathbb{R}_{\text{an}^*})$ is a real closed field of rank one extending $\mathbb{R}$  \cite[Theorem 5.8]{Coste::oMinimal}. An element $[x(t)] \in H( \mathbb{R}_{\text{an}^*})$ is positive if and only if $x(t) > 0$. Moreover, $H( \mathbb{R}_{\text{an}^*})$ has a non-trivial valuation given by $\nu([x(t)]) = \nu(x(t))$ \cite[Section 4.1]{Alessandrini::LogLimit}. In particular, $\nu( H( \mathbb{R}_{\text{an}^*})^*) = \mathbb{R}$. 

Each polynomial $f \in H( \mathbb{R}_{\text{an}^*})[x_1^{\pm}, \dots , x_n^{\pm}]$ induces a polynomial with real coefficients $f_\tau \in \mathbb{R}[x_1^{\pm}, \dots , x_n^{\pm}]$ for every sufficiently small $\tau \in \mathbb{R}_{>0}$. The polynomial $f_\tau$ is obtained from $f$ by evaluating its coefficients at $t = \tau$.

\begin{theorem}
    \label{Thm::NumOfSols} Let $f_1,\dots, f_n \in H(\mathbb{R}_{\text{an}^*})[x_1^{\pm}, \dots , x_n^{\pm}]$. The equation system $f_1 = \dots = f_n = 0$ has $k$ solutions in $H(\mathbb{R}_{\text{an}^*})^n_{>0}$ if and only if there exists $\epsilon \in \mathbb{R}_{>0}$ such that for all $\tau \in (0,\epsilon)$ the equation system $f_{1,\tau} = \dots = f_{n,\tau} = 0$ has $k$ solutions in $\mathbb{R}^n_{>0}$. 
\end{theorem}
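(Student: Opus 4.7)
The plan is to leverage the o-minimality of the structure $\mathbb{R}_{\text{an}^*}$ whose Hardy field is $H(\mathbb{R}_{\text{an}^*})$, as set up in Appendix~\ref{AppendixA}. The crucial conceptual identification is that an element $[x(t)] \in H(\mathbb{R}_{\text{an}^*})$ may be regarded as the germ at $0^+$ of a locally convergent definable function $x:(0,\epsilon) \to \mathbb{R}$, and that evaluation at sufficiently small $\tau \in (0,\epsilon)$ returns an honest real number. Under this identification, a tuple $\xi \in H(\mathbb{R}_{\text{an}^*})^n$ corresponds to the germ of a definable curve $\tau \mapsto \xi_\tau \in \mathbb{R}^n$; moreover, $\xi$ is positive precisely when $\xi_\tau \in \mathbb{R}^n_{>0}$ for all sufficiently small $\tau$, and a polynomial identity $f(\xi) = 0$ over the Hardy field is equivalent to $f_\tau(\xi_\tau) = 0$ for all small $\tau$, directly from the definition of the equivalence relation on generalized real Puiseux series.

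For the forward implication, suppose $\xi^{(1)}, \ldots, \xi^{(k)} \in H(\mathbb{R}_{\text{an}^*})^n_{>0}$ are $k$ distinct positive solutions of the system. I would choose $\epsilon > 0$ small enough that, for all $\tau \in (0, \epsilon)$, each $\xi^{(j)}_\tau$ lies in $\mathbb{R}^n_{>0}$, satisfies $f_{i,\tau}(\xi^{(j)}_\tau) = 0$ for every $i$, and the tuples $\xi^{(1)}_\tau, \ldots, \xi^{(k)}_\tau$ are pairwise distinct; such $\epsilon$ exists because these finitely many conditions each hold on a right-neighborhood of $0$, and a finite intersection of such neighborhoods is again such a neighborhood. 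This immediately gives \emph{at least} $k$ positive real solutions for every small $\tau$.

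To upgrade this to equality and to handle the converse direction, I would pass to the definable set $S = \{(\tau, x) \in (0,\epsilon) \times \mathbb{R}^n_{>0} : f_{i,\tau}(x) = 0 \text{ for all } i\}$. Once $\epsilon$ is chosen small enough that every coefficient of every $f_i$ is represented by an absolutely convergent series on $(0,\epsilon)$, this set is definable in $\mathbb{R}_{\text{an}^*}$. Invoking cell decomposition together with the o-minimal trivialization/definable choice theorems, after possibly shrinking $\epsilon$ the fiber cardinality $|S_\tau|$ is constant, equal to some $m \in \mathbb{N}$, and $S$ splits as a disjoint union of graphs of definable continuous functions $\phi_1, \ldots, \phi_m : (0,\epsilon) \to \mathbb{R}^n_{>0}$. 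Each $\phi_j$ represents an element of $H(\mathbb{R}_{\text{an}^*})^n_{>0}$ that solves the system, and, conversely, the germ of any positive Hardy-field solution must coincide with one of the $\phi_j$ (because its evaluation at every small $\tau$ is a point of $S_\tau$, which is covered by the $\phi_j$, and the $\phi_j$ have disjoint graphs). Thus the number of positive Hardy-field solutions equals $m$, and both directions follow simultaneously.

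The main obstacle I anticipate is the careful bookkeeping around definability: one must ensure that the coefficients of the $f_i$, which are arbitrary locally convergent generalized real Puiseux series, do in fact organize into a definable family in $\mathbb{R}_{\text{an}^*}$ after choosing a common radius of convergence, and that the uniform cell decomposition may be applied in this family. Once definability of $S$ is secured, the rest is a direct application of standard o-minimal tools.
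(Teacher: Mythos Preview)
Your approach is correct, and the outline is sound. The paper's own proof, however, takes a much shorter route: it simply observes that for each fixed $k$ the statement ``the system has exactly $k$ positive solutions'' is a first-order formula (with parameters given by the coefficients of the $f_i$) in the language of $\mathbb{R}_{\text{an}^*}$, and then invokes the transfer principle \cite[Proposition 5.9]{Coste::oMinimal}, which asserts that such a formula holds in $H(\mathbb{R}_{\text{an}^*})$ if and only if it holds in $\mathbb{R}$ for all sufficiently small $\tau$. Your argument instead unpacks this transfer principle in the case at hand, using cell decomposition and definable choice to obtain constancy of the fiber cardinality $|S_\tau|$ and to produce the definable sections $\phi_j$. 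Both routes are valid; the paper's is a one-line citation, while yours is more self-contained and makes the underlying mechanism explicit.

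One minor omission: your splitting of $S$ into graphs of finitely many $\phi_j$ presupposes that the fibers $S_\tau$ are finite. If after shrinking $\epsilon$ the fibers are infinite, you should argue separately that the Hardy-field system then has more than $k$ solutions (e.g., by extracting $k+1$ pairwise distinct definable sections from a cell of positive fiber dimension via definable choice). This is routine but worth a sentence.
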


\begin{proof}
    Since both statements can be expressed as a definable family of formulas, the result follows from \cite[Proposition 5.9]{Coste::oMinimal}.
\end{proof}

\begin{remark}
    Instead of the Hardy field $H(\mathbb{R}_{\text{an}^*})$, one could also use the field of real algebraic Puiseux series $\mathbb{R}\{t\}$, which is the real closure of the rational function field $\mathbb{R}(t)$ \cite[Corollary 2.98]{basu2007algorithms}.
    The analogue of \Cref{Thm::NumOfSols} also holds true for $\mathbb{R}\{t\}$ \cite[Propositon 3.17]{basu2007algorithms}.
    However, $\nu(\mathbb{R}\{t\}^*) \subsetneq \mathbb{R} $, while $\nu(H(\mathbb{R}_{\text{an}^*})^*) = \mathbb{R}$, and for this reason we use the Hardy field  $H(\mathbb{R}_{\text{an}^*})$ in this work.
\end{remark}

\subsection{Tropicalization of algebraic varieties}
\label{SubSec:TropAlg}
To establish a connection between the positive tropicalization and the tropicalization of algebraic varieties, we first recall the definition and some important properties of the latter.

With a slight abuse of notation, we write $\val \colon (\mathcal{C}^{*})^n\to \mathbb{R}^n$ for the coordinate-wise valuation map.
Throughout the article, all varieties $V = V(I) \subseteq (\mathcal{C}^{*})^n$ will be very affine varieties with defining ideal $I \subseteq \mathcal{C}[x_1^\pm, \dots, x_n^\pm]$ in the Laurent polynomial ring.
We define the \emph{tropicalization $\Trop(V)$} as the closure of $\val(V)$ in the Euclidean topology of $\mathbb{R}^n$ \cite[Theorem 3.2.3]{maclagan2015introduction}. One of the earliest instances of tropical geometry traces back to 1971, when Bergman investigated the logarithmic limit of complex algebraic varieties \cite{loglims}. For $t > 0$, denote
 \[\Log _t:\left(\mathbb{C}^*\right)^n \to \mathbb{R}^n, \quad x \mapsto\left(\log _t\left(\left|x_1\right|\right), \ldots, \log _t\left(\left|x_n\right|\right)\right), \]
 the coordinatewise logarithm map. The \emph{logarithmic limit} of a set $X \subseteq (\C^*)^n$ is traditionally  defined as the limit of the family of sets $\Log_t(X)$ as $t \to \infty$, where the limit should be understood in the sense of \cite[Section 2.1]{Alessandrini::LogLimit}.
 In this work, we use the min-convention for defining tropical varieties, therefore we consider the following definition of the logarithmic limit of $X$
 \begin{align}
     \label{Eq::LogLimitBergman}
     \LL(X) := - \lim_{t \to \infty} \Log_t(X).
 \end{align}
In other words, the logarithmic limit $\LL(X)$ is the reflection of the logarithmic limit set from~\cite{loglims} around the origin. The connection to tropical geometry is due to the following result.

\begin{theorem} 
\label{Thm::ComplexLogLimit}
(\cite{Jonsson:Amoeba}, see also \cite[Theorem 1.4.2]{maclagan2015introduction})
     Let $V_\mathbb{C} \subseteq (\mathbb{C}^*)^n$ be a very affine algebraic variety defined by an ideal $I \subseteq \C[x_1^\pm, \dots ,x_n^\pm]$. Let $\mathcal{C}$ be an algebraically closed extension of $\C$ with non-trivial valuation. Then
     \[ \Trop(V) = \LL(V_\C),\]
     where $V \subseteq (\mathcal{C}^*)^n$ denotes the very affine variety defined by $I$. 
\end{theorem}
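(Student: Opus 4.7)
The plan is to establish the two inclusions $\LL(V_\C) \subseteq \Trop(V)$ and $\Trop(V) \subseteq \LL(V_\C)$ separately, using the Fundamental Theorem of Tropical Geometry as a bridge between the metric description of the logarithmic limit and the valuation-theoretic description of the tropicalization. Recall that this theorem characterizes $\Trop(V)$ both as the Euclidean closure of $\nu(V)$ and as the set of $w \in \R^n$ for which the initial ideal $\inw(I \cdot \mathcal{C}[x_1^{\pm}, \dots, x_n^{\pm}])$ contains no monomial.

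For the inclusion $\LL(V_\C) \subseteq \Trop(V)$, I take $w \in \LL(V_\C)$, so that there exist $t_k \to \infty$ and points $x^{(k)} \in V_\C$ with $-\Log_{t_k}(x^{(k)}) \to w$. Suppose for contradiction that $w \notin \Trop(V)$; then $\inw(I)$ contains a monomial, so there is $f = \sum_\alpha c_\alpha x^\alpha \in I$ for which a unique $\alpha_0$ strictly minimizes $\alpha \cdot w$. Comparing magnitudes on the sequence $x^{(k)}$, the term $c_{\alpha_0} (x^{(k)})^{\alpha_0}$ has magnitude $\sim t_k^{-\alpha_0 \cdot w}$ while every other term decays strictly faster. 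This forces $f(x^{(k)}) \neq 0$ for $k$ large, contradicting $x^{(k)} \in V_\C$.

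For the inclusion $\Trop(V) \subseteq \LL(V_\C)$, since $\LL(V_\C)$ is Euclidean-closed it suffices to show that every $w \in \nu(V)$ lies in $\LL(V_\C)$. Given $y \in V$ with $\nu(y) = w$, I use that the closure of $\nu(V)$ is independent of the choice of algebraically closed non-Archimedean extension $\mathcal{C}$ of $\C$; without loss of generality $\mathcal{C} = \C\{\!\{ t \}\!\}$, and each coordinate can be written as $y_i(t) = c_i t^{w_i} + \cdots$ with $c_i \neq 0$. A formal substitution $t \mapsto 1/t_k$ with $t_k \to \infty$ asymptotically yields $|y_i(1/t_k)| \sim |c_i|\, t_k^{-w_i}$, so $-\Log_{t_k}(y(1/t_k)) \to w$, as required, provided the Puiseux series converge. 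The general case is handled by Artin approximation: one replaces $y$ with a convergent analytic family in $V_\C$ sharing enough leading terms to preserve the asymptotic behavior.

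The main obstacle is this final convergence step: upgrading a formal Puiseux series solution into a genuine analytic family whose real-parameter specializations provide the required sequence in $V_\C$. Resolving this in full generality is the technical heart of the theorem and is carried out in \cite{Jonsson:Amoeba} and \cite[Theorem 1.4.2]{maclagan2015introduction}, either through a direct analysis of amoeba asymptotics or via a Gröbner-degeneration argument over valued rings.
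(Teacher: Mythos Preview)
The paper does not prove this theorem; it is stated with attribution to \cite{Jonsson:Amoeba} and \cite[Theorem 1.4.2]{maclagan2015introduction} and no argument appears in the text. There is therefore no in-paper proof against which to compare your proposal.

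On its own terms your outline is the standard one and is sound. The inclusion $\LL(V_\C)\subseteq\Trop(V)$ is argued correctly and completely: once one knows that $\Trop(V)=\bigcap_{f\in I}\trop(V(f))$ (part of the Fundamental Theorem, together with invariance of the tropicalization under the extension $\C\hookrightarrow\mathcal{C}$), the failure of $w$ to lie in $\Trop(V)$ produces an $f\in I$ with $\C$-coefficients whose initial form at $w$ is a single monomial, and your dominant-term estimate then gives the contradiction. For the reverse inclusion you correctly isolate the substantive step---converting a valued-field point of $V$ into an actual one-parameter family of points in $V_\C$ with the prescribed logarithmic asymptotics---and, exactly as the paper does, you defer that step to the cited sources. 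One small simplification over your Artin-approximation route: since $\Trop(V)$ is independent of the choice of $\mathcal{C}$, you may take $\mathcal{C}$ to be the algebraically closed field of locally \emph{convergent} complex Puiseux series from the outset; then each coordinate $y_i(t)$ is already an honest convergent germ, the substitution $t=\tau$ for small real $\tau>0$ is literal, and $-\Log_{1/\tau}\bigl(y(\tau)\bigr)\to w$ follows directly from $\nu(y)=w$.
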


Throughout the paper, we will write $\Trop(V_{\mathbb{C}})$ for the logarithmic limit, as defined in ~\eqref{Eq::LogLimitBergman}, of a very affine variety $V_\mathbb{C} \subseteq (\mathbb{C}^*)^n$. By Theorem~\ref{Thm::ComplexLogLimit}, $\Trop(V_{\mathbb{C}})$ is equal to the tropicalization of $V$, where $V$ is the Zariski closure of $V_{\mathbb{C}}$ in $(\mathcal{C}^*)^n$ for any algebraically closed extension $\mathcal{C}$ of $\mathbb{C}$ with a non-trivial valuation.  

Next, we recall a characterization of $\Trop(V)$ in terms of \emph{initial ideals}. For $w \in \mathbb{R}^n$ and $f = \sum c_\alpha x^\alpha \in \mathcal{C}[x_1^{ \pm 1}, \ldots, x_n^{ \pm 1}]$, we define the initial form 
\[\inw(f)=\sum_{\alpha \colon \nu\left(c_{\alpha}\right)+ w \cdot \alpha =W} \overline{t^{-\nu\left(c_{\alpha}\right)} c_{\alpha}} \phantom{\cdot} x^{\alpha},  \]
where $W=\min \left\{\nu\left(c_\alpha\right)+w \cdot \alpha \right\}$. The \emph{initial ideal}, denoted as $\inw(I)$, of an ideal $I \subseteq \mathcal{C}[x_1^\pm, \dots , x_n^\pm]$ is defined as the ideal generated by the initial forms $\inw(f), \, f \in I$ in the Laurent polynomial ring $\widetilde{\CC}[x_1^\pm, \dots , x_n^\pm]$ over the residue field.
Furthermore, we define the initial degeneration $\inw(V(I))$ to be the scheme $\operatorname{spec}\left(\widetilde{\CC}[x_1^\pm, \dots , x_n^\pm] / \inw(I)\right)$ defined by $\inw(I)$.
By the Fundamental Theorem of Tropical Algebraic Geometry, one can verify membership of a point in $\Trop(V(I))$ using the initial ideal and initial degeneration.

\begin{theorem}\cite[Fundamental Theorem of Tropical Algebraic Geometry, Theorem 3.2.3]{maclagan2015introduction}
\label{Thm:FundamentalThm}
    Let $I \subseteq \mathcal{C}[x_1^{\pm}, \dots , x_n^{\pm}]$ be an ideal. For $w \in \mathbb{R}^n$, the following are equivalent
\begin{itemize}
    \item[(i)] $w \in \Trop(V(I))$,
    \item[(ii)] $\inw(V(I)) \neq \emptyset$, 
    \item[(iii)] $\inw(I)$ contains no monomials.
\end{itemize}
\end{theorem}

\begin{example}
\label{Ex::ComplexTrop}
    Consider the polynomial $f = x_1^2-2x_1+1 + x_2 \in \mathbb{C}\{\!\{t \}\!\}[x_1,x_2]$. We depict its Newton polytope, that is, the convex hull of the exponent vectors $(2,0),(1,0),(0,0),(0,1)$ in Figure~\ref{FIG1}(a).
    
    As the coefficients of $f$ are in $\mathbb{C}$, the initial form coincides with the truncation $f_{|F}$ to the face $F$ with inner normal vector $w$. For example, for $w = (0,1)$ we have
    \begin{align}
    \label{Ex:InitForm}
     \inw(f) =  x_1^2 - 2 x_1 + 1 = f_{|F}(x_1,x_2)
     \end{align}
    where $F = \Conv((0,0),(2,0))$. By the truncation $f_{|F}$, we simply mean the sum of all monomials whose exponent vectors are contained in the face $F$.

    By computing all initial forms and using Theorem~\ref{Thm:FundamentalThm}, we find that the tropicalization of $V(f)$ consists of the rays spanned by the vectors $(0,1),\,(0,1),\,(-1,-2)$, see Figure~\ref{FIG1}(b). These rays form a polyhedral fan that coincides with the $(n-1)$-skeleton of the inner normal fan of the Newton polytope of $f$. This is a general phenomenon that occurs for the tropicalization of hypersurfaces defined by polynomials whose coefficients have zero valuation \cite[Theorem 3.1.3]{maclagan2015introduction}.

    By Theorem~\ref{Thm::ComplexLogLimit}, the tropicalization of $V(f)$ equals the limit of the sets $-\Log_t(V(f))$ as $t \to \infty$. Figure~\ref{FIG1}(c) shows   $-\Log_t(V(f))$ for $t = e$ (Euler's number). Intuitively, one can think of the logarithmic limit as shrinking the sets $-\Log_t(V(f))$.
\end{example}

Next, we recall the definition of multiplicities of cells in $\Trop(V(I))$  \cite[Section 3.4]{maclagan2015introduction},\cite[Definition 2.6]{Bossinger2017}.
By \cite[Theorem 2.2.1]{Speyer_Thesis}, there exists a polyhedral complex $\Sigma$ with support $ \Trop(V(I))$ such that $\inw(I)$ is constant for each $w \in \relint(\sigma)$ for all $\sigma \in \Sigma$.
We associate a \emph{multiplicity} with each maximal cell $\sigma \in \Sigma$ as follows. Let $\text{Ass}^\text{min}(\inw(I))$ be the set of minimal associated primes of $\inw(I)$ that do not contain any monomial, and denote by $\mult(P,\inw(I))$ the multiplicity of a minimal prime $P \in \text{Ass}^\text{min}(\inw(I))$ (cf. \cite[Definition 3.4.1]{maclagan2015introduction}, \cite[Chapter 3]{eisenbud1995commutative}). The multiplicity of a maximal cell $\sigma \in \Sigma$ is defined as
\begin{align}
\label{Eq:Multiplicity}
 \mult(\sigma) := \sum_{P \in\text{Ass}^\text{min}(\inw(I))} \mult(P,\inw(I)), \qquad \text{for any } w \in \relint(\sigma).  
 \end{align}

\begin{figure}[t]
\centering
\begin{minipage}[h]{0.3\textwidth}
\centering
\includegraphics[scale=0.4]{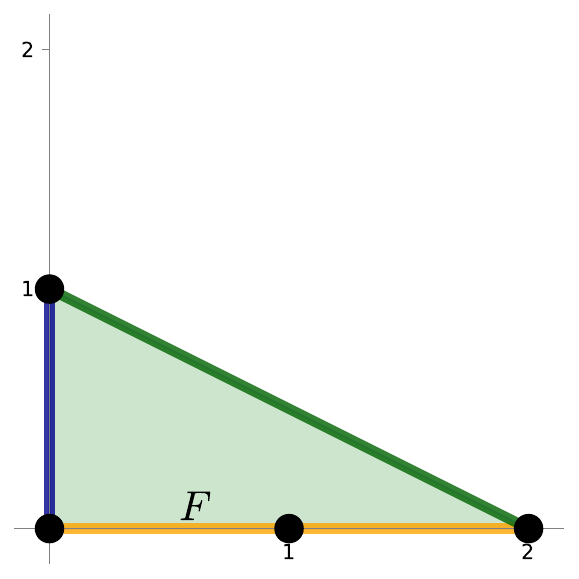}

{\small (a) Newton polytope of $f$}
\end{minipage}
\hspace{10 pt}
\begin{minipage}[h]{0.3\textwidth}
\centering
\includegraphics[scale=0.4]{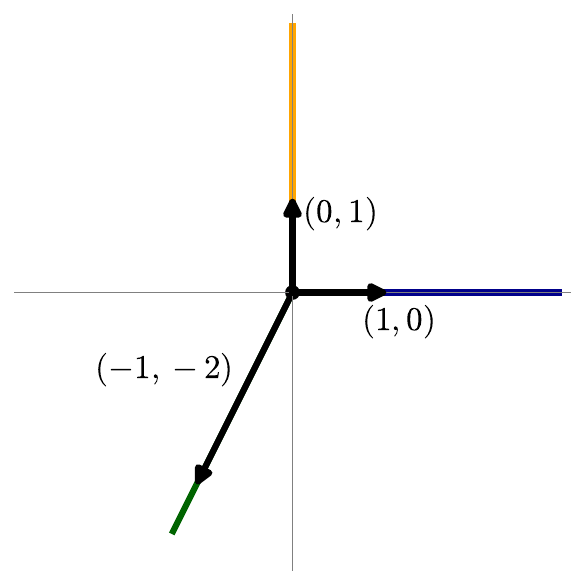}

{\small (b) $\Trop(V(f))$}
\end{minipage}
\hspace{10 pt}
\begin{minipage}[h]{0.3\textwidth}
\centering
\includegraphics[scale=0.4]{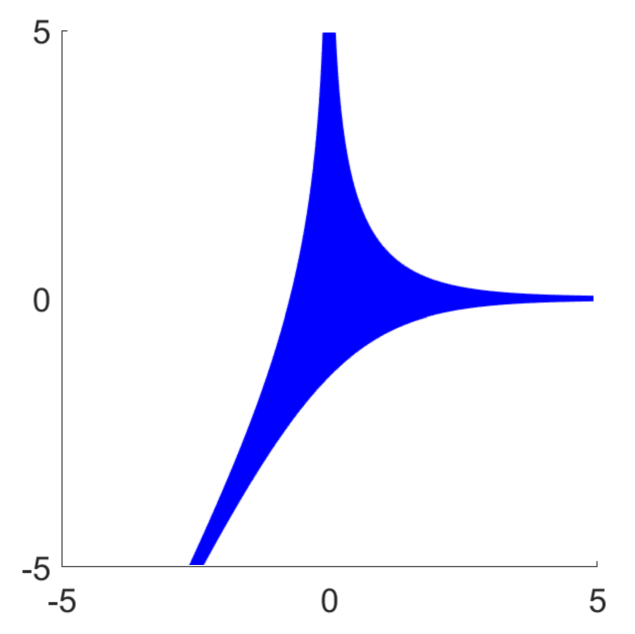}

{\small (c) $-\Log_e(V(f))$}
\end{minipage}

\caption{{\small  Illustration of Example~\ref{Ex::ComplexTrop} with $f = x_1^2 - 2x_1 +1 - x_2$}}\label{FIG1}
\end{figure}

Following \cite{LiftingInters}, we call a point $w \in \mathbb{R}^n$ a \emph{simple point} of the complex tropicalization $\Trop(V(I))$ if $w$ lies in the relative interior of a maximal cell of multiplicity one.

\begin{remark}
\label{Rmk:NotSimplePoint}
    If $I \subseteq \mathcal{C}[x_1^{\pm}, \dots , x_n^{\pm}]$ is a principal ideal generated by $f \in \mathcal{C}[x_1^{\pm}, \dots , x_n^{\pm}]$, then by \cite[Lemma 3.4.6]{maclagan2015introduction} the multiplicity of a maximal cell $\sigma$ is the lattice length of the edge in the polyhedral subdivision of the Newton polytope of $f$ that is dual to $\sigma$. In particular, if $\sigma$ has multiplicity one, the initial form $\inw(f)$ is a binomial for any $w \in \relint(\sigma)$.
    For example, consider the polynomial $f$ from Example~\ref{Ex::ComplexTrop}. The cell $\sigma = \Cone((1,0))$ is dual to the edge $\Conv((0,0),(0,1))$, which has lattice length one. Thus, $\mult(\sigma) = 1$, $w = (1,0)$ is a simple point of $\Trop(V(I))$. Moreover, the initial form $\text{in}_{(1,0)}(f) = 1 + x_2$ is a binomial. 

    The edge $F = \Conv((0,0),(2,0))$ dual to $\Cone((0,1))$ has lattice length $2$. Therefore, $(0,1)$ is not a simple point of $\Trop(V(I))$. Note that the initial form \eqref{Ex:InitForm} is not a binomial.
\end{remark}

The property that initial ideals corresponding to maximal cells of multiplicity one are binomial extends beyond the context discussed in Remark~\ref{Rmk:NotSimplePoint}. Next we state this result, which will play a crucial role in proving the real analogue of the Fundamental Theorem of Tropical Geometry (Theorem~\ref{thm: multiplicity one cones are positive}).

\begin{lemma}\cite[Lemma 2.7]{Bossinger2017}
Let $I \subseteq \mathcal{C}[x_1^{\pm},\dots,x_n^{\pm}]$ be a prime ideal. If $w \in \Trop(V(I))$ is contained in a maximal cone of multiplicity one, then $\inw(I)$ has a unique toric ideal in its primary decomposition.
\end{lemma}

In Theorem~\ref{thm: weakly positive transverse intersection}, we provide a real analogue of the Transverse Intersection Theorem. We conclude this section by recalling the classical version of this theorem. By \cite[Definition 3.4.9]{maclagan2015introduction}, the tropical varieties $\Trop(V(I))$ and $ \Trop(V(J))$ \emph{meet transversally at $w \in \R^n$} if there exist polyhedral complexes
$\Sigma_1, \Sigma_2$ that are supported on $\Trop(V(I))$ and $ \Trop(V(J))$ such that the affine hull of the unique cells $\sigma_1 \in \Sigma_1$ and $\sigma_2 \in \Sigma_2$, that contain $w$ in their relative interior, is $\R^n$.

\begin{theorem}
\label{thm: weakly positive transverse intersection} \cite[Theorem 3.4.12]{maclagan2015introduction}
    Let $I, J \subseteq \mathcal{C}[x_1^\pm, \dots, x_n^\pm]$ be ideals defining the varieties $X = V(I)$, $Y = V(J)$. If $\Trop(X)$ and $ \Trop(Y)$  meet transversally at a point $w \in \R^n$ then $w $ lies in $ \Trop(X \cap Y )$.
\end{theorem}

\color{black}

\section{Positive tropicalization}
\label{Section:PositiveTrop}
The main result of this section is Theorem~\ref{thm: multiplicity one cones are positive}, which is a real analogue of the Fundamental Theorem (Theorem~\ref{Thm:FundamentalThm}). We begin with a brief review of positive tropicalization.
\color{black}
The tropicalization of the positive part of an algebraic variety has been studied in \cite{SpeyerWilliams::PosGrassmannian,ARDILA2006577,Vinzant::RealRadical,Tabera::Bases,Brandenburg::TropicalPositivity}, while the tropicalization of semi-algebraic sets has been investigated in \cite{Alessandrini::LogLimit,Allamigeon::TropSpecta,JellScheidererYu::RealTrop,Blekherman::Moments}. For a semi-algebraic set $X \subseteq \mathcal{R}^n$ we define its  \emph{positive tropicalization}
\begin{align}
\label{Def:PosTrop}
 \Trop^+(X) := \overline{ \val( X \cap \mathcal{R}_{>0}^n) },
 \end{align}
where the closure is taken in the Euclidean topology of $\mathbb{R}^n$.
Similar to \eqref{Eq::LogLimitBergman}, we define the \emph{positive logarithmic limit set} of $X \subseteq \mathbb{R}^n$ as $\LL^+(X ) := \LL(X \cap \mathbb{R}_{>0}^n)$. The following result is analogous to Theorem \ref{Thm::ComplexLogLimit}. 

\begin{theorem} 
\label{Thm::PosLogLimit}
\cite[Corollary 4.6]{Alessandrini::LogLimit} 
Let $X_\R \subseteq \mathbb{R}^n$ be a semi-algebraic set, and let $\mathcal{R}$ be a real closed  field  of rank one extending $\mathbb{R}$ with a non-trivial non-Archimedean valuation. Then
\[\Trop^+(X_\RR) = \LL^+(X_\R),\]
where $X_\RR \subseteq \mathcal{R}$ is the semi-algebraic set defined by the same polynomial inequalities as $X_\R$.
\end{theorem}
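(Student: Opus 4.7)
The plan is to mirror the proof strategy of Theorem~\ref{Thm::ComplexLogLimit}, replacing algebraic closures and amoebas with rank-one real closed extensions and positive log-limits. I would first reduce to the case $\RR = H(\mathbb{R}_{\text{an}^*})$, whose elements admit concrete realizations as germs of positive, locally convergent generalized Puiseux series. Since $\RR$ has rank one and extends $\mathbb{R}$, the image $\val(\RR^*)$ is a divisible (hence dense) subgroup of $\mathbb{R}$, and by embedding any two such fields into a common real closed valued field and applying model completeness of real closed valued fields, the Euclidean closure $\Trop^+(X_\RR)$ is independent of the choice of rank-one extension $\RR$. Thus one may freely work with the Hardy field.

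For the inclusion $\Trop^+(X_\RR) \subseteq \LL^+(X_\R)$, take $x \in X_\RR \cap \RR_{>0}^n$ with $\val(x) = w$ and realize each $x_i$ by a positive generalized Puiseux series with leading exponent $w_i$ as in \eqref{Eq::GenRealPuisSeries}. Since $X_\RR$ is cut out by the same first-order formula as $X_\R$, the definable curve $\tau \mapsto (x_1(\tau),\ldots,x_n(\tau))$ lies in $X_\R \cap \mathbb{R}_{>0}^n$ for all sufficiently small $\tau > 0$. A direct expansion gives $\log_\tau(x_i(\tau)) \to w_i$ as $\tau \to 0^+$; the substitution $s = 1/\tau$ rewrites this as $-\log_s(x_i(1/s)) \to w_i$ as $s \to \infty$, exhibiting $w$ in $\LL^+(X_\R)$. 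Passing to the Euclidean closure of $\val(X_\RR \cap \RR_{>0}^n)$ completes this direction.

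The opposite inclusion $\LL^+(X_\R) \subseteq \Trop^+(X_\RR)$ is the main obstacle. Given $w \in \LL^+(X_\R)$, one has a sequence $t_k \to \infty$ and $x^{(k)} \in X_\R \cap \mathbb{R}_{>0}^n$ with $-\Log_{t_k}(x^{(k)}) \to w$, and the task is to manufacture from this discrete data a single germ in $H(\mathbb{R}_{\text{an}^*})^n$ lying in $X_\RR \cap \RR_{>0}^n$ with valuation exactly $w$. My plan is to exploit the o-minimality of $\mathbb{R}_{\text{an}^*}$ in the same spirit as Proposition~5.9 of \cite{Coste::oMinimal} used to prove Theorem~\ref{Thm::NumOfSols}. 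The parametric family $\tau \mapsto -\Log_{1/\tau}(X_\R \cap \mathbb{R}_{>0}^n)$ is definable and has $w$ as an accumulation point at $\tau = 0^+$; applying definable choice and the curve selection lemma to a shrinking sequence of open boxes around $w$ yields a definable germ $\tau \mapsto x(\tau) \in X_\R \cap \mathbb{R}_{>0}^n$ whose equivalence class in $H(\mathbb{R}_{\text{an}^*})^n$ has valuation $w$. The subtle point is to secure convergence of all coordinates along a common curve, which forces one to work with boxes rather than coordinate-wise, but the o-minimal framework handles this cleanly.
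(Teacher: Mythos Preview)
The paper does not give its own proof of this statement: Theorem~\ref{Thm::PosLogLimit} is quoted directly as \cite[Corollary~4.6]{Alessandrini::LogLimit} with no argument supplied, so there is nothing in the paper to compare your proposal against. Your sketch is, in spirit, a reconstruction of Alessandrini's o-minimal argument, and the overall architecture (reduce to a concrete Hardy field, push germs forward for one inclusion, use o-minimal curve selection/definable choice for the other) is sound.

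That said, there is one concrete technical slip worth flagging. You assert that the family $\tau \mapsto -\Log_{1/\tau}(X_\R \cap \mathbb{R}_{>0}^n)$ is definable in $\mathbb{R}_{\text{an}^*}$ and then invoke definable choice. But $\mathbb{R}_{\text{an}^*}$ is polynomially bounded \cite{Dries::GenPowerSeries}, so the global logarithm is \emph{not} definable there, and that family is not definable as written. The fix is to avoid $\Log$ altogether and phrase everything via the power functions $\tau \mapsto \tau^{r}$, which \emph{are} definable in $\mathbb{R}_{\text{an}^*}$: for fixed $\epsilon>0$ the set
\[
\bigl\{\tau\in(0,1)\;:\;\exists\,x\in X_\R\cap\mathbb{R}_{>0}^n\ \forall i\ \ \tau^{\,w_i+\epsilon}<x_i<\tau^{\,w_i-\epsilon}\bigr\}
\]
is definable, and o-minimality forces it to contain an interval $(0,\delta)$ once you know it accumulates at $0$. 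Definable choice then yields a germ in $H(\mathbb{R}_{\text{an}^*})^n$ lying in $X_\RR\cap\RR_{>0}^n$ with each $\val(x_i)$ within $\epsilon$ of $w_i$; letting $\epsilon\to0$ gives $w\in\overline{\val(X_\RR\cap\RR_{>0}^n)}=\Trop^+(X_\RR)$. Your ``shrinking boxes plus curve selection'' paragraph is morally this argument, but as stated it leans on a non-definable map. A secondary point: your base-field--independence step via model completeness of real closed valued fields is plausible but terse; note that the paper itself invokes \cite[Theorem~6.9]{JellScheidererYu::RealTrop} for exactly this kind of invariance in the proof of Lemma~\ref{Lemma: Hensel lifting}, which would be a cleaner citation.
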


Justified by \Cref{Thm::PosLogLimit}, and with a slight abuse of notation, we define the positive tropicalization of a semialgebraic set $X_{\mathbb{R}} \subseteq \mathbb{R}^n$ as
\[ \Trop^+(X_\R) := \LL^+(X_\R).\]

Unlike tropicalization of algebraic varieties, positive tropicalization of an algebraic variety cannot be characterized using initial ideals. This makes positive tropicalization challenging to compute, even for algebraic varieties.
To overcome this difficulty, we 
rely on a relaxed notion that was introduced by Speyer and Williams in \cite{SpeyerWilliams::PosGrassmannian}
in order to compute the  tropicalization of the positive Grassmannian.
 For an algebraic variety $V \subseteq (\CC^*)^n$, its \emph{weakly positive tropicalization} is given by
\begin{align}
    \label{Def:WeakPosTrop}
    \Trop^+_\CC(V) := \overline{ \val( V \cap \mathcal{C}_{>0}^n) }.
\end{align}
For $\CC=  \mathbb{C}\{\!\{t \}\!\}$, the field of complex Puiseux series, $\Trop^+_\CC(V) $ has been called the positive part of the tropical variety in \cite{SpeyerWilliams::PosGrassmannian}. One can characterize the weakly positive tropicalization in terms of initial ideals as follows.

\begin{proposition}
\label{Prop::CharWeakSigned}
Let  $V \subseteq (\CC^*)^n$ be a very affine algebraic variety defined by an ideal $I \subseteq \mathcal{C}[x_1^\pm, \dots , x_n^\pm]$. For $w \in \mathbb{R}^n$ the following are equivalent
\begin{itemize}
    \item[(i)] $ w \in \Trop_\CC^+(V)$,
    \item[(ii)] $\exists u \in \mathbb{R}^n \colon V\big( \initial_u(\initial_w(I)) \big) \cap \widetilde{\mathcal{R}}^{n}_{>0} \neq \emptyset$,
    \item[(iii)] $\initial_w(I) \cap \widetilde{\mathcal{R}}_{\geq 0}[x_1, \dots , x_n] = \{ 0 \} $.
\end{itemize}
 Furthermore, the set of points $w \in \mathbb{R}^n$ with $V\big( \initial_w(I) \big) \cap \widetilde{\mathcal{R}}^{n}_{>0} \neq \emptyset $ forms a dense subset of $\Trop_\CC^+(V)$.
\end{proposition}
\begin{proof}
    The equivalence of $(i)$ and $(iii)$ holds by \cite[Proposition 2.2]{SpeyerWilliams::PosGrassmannian} over the Puiseux series $\CC = \mathbb{C}\{\!\{t \}\!\}$. The proof does however not rely on this particular choice of base-field.
    The equivalence $(ii) \iff (iii)$ follows from \cite{EinsiedlerTuncal::IdealPosPoly} (cf. \cite[Proposition 2.9]{Vinzant::RealRadical}), where again the only relevant property of the base-field is real-closedness.
    The density statement follows from the proof of \cite[Proposition 2.2]{SpeyerWilliams::PosGrassmannian}.
\end{proof}

The weakly positive tropicalization has an additional important property: there exists a polyhedral complex $\Sigma$ whose support is the complex tropicalization $\Trop(V(I))$, such that the weakly positive tropicalizaton $\Tropw(V(I))$ is the support of a closed subcomplex of $\Sigma$ \cite[Corollary 2.4]{SpeyerWilliams::PosGrassmannian}. This property might fail for the positive tropicalization $\Trop^+(V)$; see for example \cite[Figure 7]{Alessandrini::LogLimit}.

Since, by definition, positive elements are also weakly positive, it follows directly from \eqref{Def:PosTrop} and \eqref{Def:WeakPosTrop} that
\[\Trop^+(V) \subseteq \Trop_\CC^+(V).\]
This inclusion might be strict, as the following example demonstrates.

\begin{example}
\label{Ex:SingTrop}
We revisit the polynomial from Example~\ref{Ex::ComplexTrop}. Since 
\[ f(x) =  x_1^2-2x_1+1 + x_2 = (x_1 - 1)^2 + x_2 > 0,\]
for all $x \in \mathcal{R}_{>0}^2$, we have $\Trop^+(V(f)) = \emptyset$.

To determine the weakly positive tropicalization, we use Proposition~\ref{Prop::CharWeakSigned}. The initial form of $f$ with $w = (0,1)$  has been computed in \eqref{Ex:InitForm}. Since $V(\inw(f)) \cap \mathbb{R}_{>0} = V(x_1^2 - 2x_1 +x_1)  \cap \mathbb{R}_{>0} \neq \emptyset$, it follows that $ w \in \Trop_{\mathcal{C}}^+(V(f))$. In particular, the weakly positive tropicalization of $V(f)$ is non-empty. 
\end{example}

In Example~\ref{Ex:SingTrop}, the vanishing locus $V(\inw(f)) \cap \mathbb{R}^2_{>0}$ contained only singular points.  This led to $w = (0,1)$ being included in the weakly positive tropicalization, but not in the positive tropicalization of $V(f)$.

From \cite[Lemma 2.6]{Vinzant::RealRadical}, it follows that for an ideal $I$ defined by polynomials with coefficients in $\mathbb{R}$, the existence of a smooth point in $V(\inw(I)) \cap \mathbb{R}^n_{>0}$ implies $w \in \Trop^+(V(I))$. Using Hensel lifting, we generalize this result to the non-trivially valuated case.

\begin{lemma}
\label{Lemma: Hensel lifting}
    Let $X\subseteq (\CC^*)^n$ be a very affine variety and let $w \in \R^n$. If the scheme $\inw(X)$ contains a smooth point $\overline{x} \in \widetilde{\RR}^n_{>0}$, then $w$ lies in the positive tropicalization $\Trop^+(X)$.
\end{lemma}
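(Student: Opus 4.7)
The plan is to lift $\overline{x}$ via Hensel's lemma to a point $\tilde{x} \in X \cap \RR_{>0}^n$ with $\nu(\tilde{x}) = w$, thereby certifying $w \in \Trop^+(X)$. Without loss of generality I may assume $w \in \nu(\RR^*)^n$, so that a section $t^w \in \RR_{>0}^n$ exists; otherwise one approximates $w$ by such vectors and passes to the closure in the definition of $\Trop^+$.

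First, I would perform a weighted change of coordinates. Let $I \subseteq \RR[x_1^\pm, \ldots, x_n^\pm]$ be the defining ideal of $X$. For each $f = \sum c_\alpha x^\alpha \in I$, set $W_f = \min\{\nu(c_\alpha) + w\cdot\alpha\}$ and define $\tilde{f}(y) := t^{-W_f}\, f(t^{w_1} y_1, \ldots, t^{w_n} y_n) \in \RR^\circ[y_1^\pm, \ldots, y_n^\pm]$; its reduction modulo $\RR^{\circ\circ}$ equals $\inw(f)$. The ideal $\tilde{I} := \langle \tilde{f} : f \in I \rangle$ defines a flat model $\tilde{X}$ of $X$ over $\RR^\circ$ whose special fiber is $\inw(X)$ and whose generic fiber is $X$ under the substitution $y_i = t^{-w_i} x_i$.

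Second, I would apply a multivariate Hensel's lemma. The valuation ring $\RR^\circ$ is Henselian because $\RR$ is real closed: its algebraic closure $\CC$ is a degree-$2$ extension and the valuation extends uniquely. Since $\overline{x}$ is a smooth point of $\inw(X)$, one can pick $n-d$ elements $\overline{g}_1, \ldots, \overline{g}_{n-d} \in \inw(I)$ (with $d = \dim_{\overline{x}}\inw(X)$) whose Jacobian has an invertible $(n-d)\times(n-d)$ minor at $\overline{x}$. Taking preimages $g_1, \ldots, g_{n-d} \in I$ with $\tilde{g}_i$ reducing to $\overline{g}_i$, the same Jacobian minor of $\tilde{g}_1, \ldots, \tilde{g}_{n-d}$ evaluated at any lift of $\overline{x}$ to $(\RR^\circ)^n$ is a unit in $\RR^\circ$. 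Classical multivariate Hensel then yields $\tilde{y} \in (\RR^\circ)^n$ with $\overline{\tilde{y}} = \overline{x}$ and $\tilde{g}_i(\tilde{y}) = 0$ for all $i$. Because $\overline{x}_i > 0$ in $\widetilde{\RR}$, each $\tilde{y}_i$ is positive in $\RR$ with $\nu(\tilde{y}_i)=0$, so $\tilde{x}_i := t^{w_i}\tilde{y}_i$ lies in $\RR_{>0}$ with $\nu(\tilde{x}_i) = w_i$.

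The main obstacle is to verify that $\tilde{y}$ in fact lies in all of $\tilde{X}$, and not merely in the complete intersection $V(\tilde{g}_1, \ldots, \tilde{g}_{n-d})$, so that $\tilde{x}$ is actually a point of $X$. This requires a local argument at $\overline{x}$: the complete intersection is smooth of relative dimension $d$ over $\RR^\circ$ there, and its special fiber $V(\overline{g}_1, \ldots, \overline{g}_{n-d})$ coincides with $\inw(X)$ in a Zariski neighborhood of $\overline{x}$, since both are smooth of dimension $d$ through $\overline{x}$ with one containing the other. Via Nakayama's lemma applied to the local ring at $\overline{x}$, this equality of special fibers propagates to an equality of $\tilde{X}$ and $V(\tilde{g}_1, \ldots, \tilde{g}_{n-d})$ in a Zariski neighborhood of $\overline{x}$, placing $\tilde{y}$ in $\tilde{X}$ and hence $\tilde{x}$ in $X$, as desired.
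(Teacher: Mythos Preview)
Your approach is the same Hensel-lifting strategy as the paper's, packaged more explicitly: you pick $n-d$ equations with nonsingular Jacobian minor, lift via multivariate Hensel, and then argue via Nakayama that the lift already lies in all of $\tilde X$. The paper instead passes to the completion (invoking \cite[Theorem 6.9]{JellScheidererYu::RealTrop} to keep $\Trop^+$ unchanged) and applies the abstract section property of Henselian local rings to the smooth locus $U \subseteq \mathcal X$; since $U \to \operatorname{spec}(\RR^\circ)$ is smooth it factors \'etale over an affine space, and the Henselian section lands in $U \subseteq \mathcal X$ by construction, so no separate Nakayama step is needed. Your observation that $\RR^\circ$ is already Henselian (because $\RR$ is real closed and the valuation extends uniquely to the degree-two extension $\CC$) is correct and in fact would let the paper skip the passage to the completion for that purpose.

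There is, however, a real gap in your Nakayama step. For Nakayama to apply to the kernel $J = \ker\bigl(O_{V(\tilde g_1,\dots,\tilde g_{n-d}),\,\overline x} \twoheadrightarrow O_{\tilde X,\,\overline x}\bigr)$ you need $J$ to be a finitely generated module over the local ring, which in turn requires the ideal $\tilde I \subseteq \RR^\circ[y_1^\pm,\dots,y_n^\pm]$ to be finitely generated. Since $\RR^\circ$ is non-Noetherian this is not automatic, and without it the implication ``$J \otimes_{\RR^\circ} \widetilde\RR = 0 \Rightarrow J = 0$'' can fail. The paper confronts exactly this issue in \Cref{prop: technical result on initial degs}, where finite generation of $I \cap \CC^\circ[x_1^\pm,\dots,x_n^\pm]$ is deduced from a result of Raynaud and Gruson \cite[Corollary 3.4.7]{flatification} on flat finite-type algebras over a domain being finitely presented. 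You should invoke the same result; once finite generation is in hand, $J$ is finitely generated over the local ring, $J \subseteq \mathfrak m_{\RR^\circ} J \subseteq \mathfrak m_A J$, and your Nakayama argument goes through.
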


To prove \Cref{Lemma: Hensel lifting} we need a technical result on initial degenerations.
Let $X$ be a very affine variety with defining ideal $I \subseteq \CC[x_1^\pm, \dots, x_n^\pm]$.
We denote by $\GG^n_{\CC^\circ} = \operatorname{spec}(\CC^\circ[x_1^\pm, \dots, x_n^\pm])$ the algebraic torus over the valuation ring, and by $\mathcal{X}$ the closure of $X$ in $\GG^n_{\CC^\circ}$.
This is the vanishing locus of $I \cap \CC^\circ[x_1^\pm, \dots, x_n^\pm]$. Furthermore, we denote by $X_s = \mathcal{X}\times_{\CC^\circ} \operatorname{spec}(\widetilde \CC)$ the special fiber.

\begin{example}
\label{Ex::special fiber}
    Consider the polynomial $g = x_1^2-2x_1+1 + tx_2  \in 
\mathbb{R}\{\!\{t \}\!\}[x_1,x_2]$, and let $X$ be the very affine variety cut out by $g$. The special fiber $X_s$ is the very affine variety defined by the polynomial $\widetilde{g} = \initial_{(0,0)}(g) =  x_1^2 - 2 x_1 + 1 \in \mathbb{R}[x_1,x_2]$.
\end{example}

\begin{lemma}
\label{prop: technical result on initial degs}
Let $X\subseteq (\CC^*)^n$ be a very affine variety defined by the ideal $I \subseteq \CC[x_1^\pm, \dots, x_n^\pm]$.
For $w = 0 \in \mathbb{R}^n$,
    the initial degeneration $\operatorname{in}_w(X)$ is equal to the special fiber $X_s$. Furthermore, the defining ideal $ I \cap \CC^\circ[x_1^\pm, \dots, x_n^\pm]$ of $\mathcal{X}$ is finitely generated.
\end{lemma}
\begin{proof}
    The first statement follows from Proposition 2.1.1 in the PhD thesis of David Speyer \cite{Speyer_Thesis}. We now show that $I \cap \CC^\circ[x_1^\pm, \dots, x_n^\pm]$ is finitely generated.
    To this end note that the quotient $\CC^\circ[x_1, \dots, x_n]/\left( I\cap \CC^\circ[x_1, \dots, x_n] \right)$ of the polynomial ring
     is torsion free, and hence flat over $\CC^\circ$.
    By a result from Raynaud and Gruson \cite[Corollary 3.4.7.]{flatification} every flat algebra of finite type over a domain is finitely presented, showing that the ideal $I\cap \CC^\circ[x_1, \dots, x_n]$ is finitely generated. In particular, the defining ideal $I\cap \CC^\circ[x_1^\pm, \dots, x_n^\pm]$ of $\mathcal{X}$ in the Laurent polynomial ring is finitely generated.
\end{proof}

\begin{proof}[Proof of \Cref{Lemma: Hensel lifting}]
    
    By \cite[Theorem 6.9]{JellScheidererYu::RealTrop} 
    the positive tropicalization $\Trop^+(X)$ does not change when we replace $\RR$ with a real closed, complete non-Archimedean valued field extension that has valuation group $\R$.
    From now on we assume without loss of generality
    that $\RR$ and $\CC$ have valuation group $\R$.
    Further, by replacing $X$ with $t^{-w}X$ we may assume without loss of generality that $w = 0$.  
    Consider as above the closure $\mathcal{X}$ of $X$ in the algebraic torus
    $\GG_{\CC^\circ}^n$ defined over the valuation ring.
    By \Cref{prop: technical result on initial degs}
     the special fiber $X_s = \mathcal{X}\times_{\CC^\circ}  \operatorname{spec}(\widetilde \CC)$ is equal to the initial degeneration $\inw(X)$.
 
    Let $U \subseteq \mathcal{X}$ be the open dense subset of smooth points. More concretely,
    by \Cref{prop: technical result on initial degs} there exists a finite choice of ideal generators $\langle f_1, \dots, f_s\rangle= I \cap \CC^\circ[x_1^\pm, \dots, x_n^\pm]$.
    For any such choice, consider the Jacobian $\operatorname{Jac}(f_1, \dots, f_s)$. Then $U$ is the complement of the closed subscheme of $\mathcal{X}$ defined by the vanishing of the maximal minors.
    By assumption, the residue of $\operatorname{Jac}(f_1, \dots, f_s)$ has full rank at $\overline{x}$, showing $\overline{x} \in U(\widetilde{\mathcal{R}})$.
    By construction of $U$, the natural morphism $ U \longrightarrow \operatorname{spec}(\RR^\circ)$ is smooth, hence by
    \cite[Lemma 29.36.20]{stacks-project}
    it factors through an \'etale map $\varphi: U \longrightarrow \mathbb{A}_{\RR^\circ}^k$ for some $k \geq 0$.
    We now choose any point $z \in (\RR^\circ)^k$ with residue $\varphi(\overline{x})$, defining a morphism $\tau \colon \operatorname{spec}(\RR^\circ) \longrightarrow \mathbb{A}_{\RR^\circ}^k $. 
    Let $Y$ be the fiber product $U \times_{\mathbb{A}_{\RR^\circ}^k} \operatorname{spec}(\RR^{\circ})$ with natural projections $\pi_1 \colon Y \longrightarrow U$, $\pi_2 \colon Y \longrightarrow \operatorname{spec}(\RR^\circ)$.
    \[\begin{tikzcd}
	U & \mathbb{A}_{\RR^\circ}^k \\
	Y & {\operatorname{spec}(\RR^\circ) }
	\arrow["\varphi", from=1-1, to=1-2]
	\arrow["\pi_1", from=2-1, to=1-1]
	\arrow["\pi_2",from=2-1, to=2-2]
	\arrow["\tau",from=2-2, to=1-2]
\end{tikzcd}\]
By construction as a fiber product, $Y$ is \'etale over $\operatorname{spec}(\RR^\circ)$.
Since $\RR^\circ$ is a complete local ring it is Henselian.
We employ Theorem 4.2 from \cite{milneLEC}, obtaining
a section $ \kappa :  \operatorname{spec}(\RR^\circ) \longrightarrow Y$ of $\pi_2$.
By construction, the resulting point $y = \pi_1  \circ \kappa$ in $X(\RR^\circ)$ has residue $\overline{x}$. In particular, its coordinates are positive and have valuation zero, finishing the proof.
\end{proof}

\begin{example}
\label{Example::SignedvsWeakSigned}
Consider the polynomial $g = x_1^2 - 2.1x_1 +1 +x_2$, which is obtained from the polynomial in Example~\ref{Ex:SingTrop} by perturbing its coefficients. Since for $w = (0,1)$
\[V(\inw(g)) \cap \mathbb{R}_{>0}^2 =  V(x_1^2 - 2.1x_1 +1) \cap \mathbb{R}_{>0}^2 \]
contains smooth points, Lemma~\ref{Lemma: Hensel lifting} implies that $w \in \Trop^+(V(g))$. In fact, we have
\[\Trop^+(V(g)) = \Trop^+_\mathcal{C}(V(g)) = \Cone\big( (0,1)\big).\]
Note that $\Cone\big( (0,1)\big)$ is a subfan of the inner normal fan of the Newton polytope of $g$ (cf. Figure~\ref{FIG1}(b)).
\end{example}

Using the results from this section, we now revisit the Fundamental Theorem of Tropical Algebraic Geometry and prove an analogous statement for positive tropicalization.

\begin{theorem}
\label{thm: multiplicity one cones are positive}
        Let $I \subseteq \mathcal{R}[x_1^\pm, \dots, x_n^\pm]$ be a  prime ideal, $X = V(I)$, and $w\in \mathbb{R}^n$ a simple point in the complex tropicalization $\Trop(X)$. Then the following are equivalent
        \begin{itemize}
    \item[(i)] $ w \in \Trop^+(X)$,
    \item[(ii)] $V\big( \initial_w(I) \big) \cap \widetilde{\mathcal{R}}^{n}_{>0} \neq \emptyset$,
    \item[(iii)] $\initial_w(I) \cap \widetilde{\mathcal{R}}_{\geq 0}[x_1, \dots , x_n] = \{ 0 \} $.
        \end{itemize}
        Furthermore, the initial degeneration $V\big( \initial_w(I) \big) \cap \widetilde{\mathcal{R}}^{n}_{>0}$ contains a smooth point.
\end{theorem}

\begin{proof}
The implications $"(i) \Rightarrow (iii)"$ and $"(ii) \Rightarrow (iii)"$ follow directly from \Cref{Prop::CharWeakSigned}. 
Note that we do not use the assumption that $w$ is a simple point here.

Assume that (iii) holds. By \Cref{Prop::CharWeakSigned}, there exists a weight vector $u \in \R^n$ such that the ideal $\initial_u(\initial_w(I))$ has a positive solution $x \in \widetilde{\mathcal{R}}_{>0}^n$.
We have that $\initial_u(\initial_w(I)) = \initial_{w + \epsilon u}(I)$ for all $\epsilon > 0$ small enough by \cite[Corollary 2.4.10]{maclagan2015introduction}. 
   Thus, we have $V\big( \initial_{w+\epsilon u}(I) \big) \cap \widetilde{\mathcal{R}}^{n}_{>0} \neq \emptyset$, which implies by Theorem~\ref{Thm:FundamentalThm} that $w + \epsilon u \in \Trop(X)$ for all small enough $\epsilon > 0$.

Let $\Sigma$ be the polyhedral complex with support $\Trop(X)$ as introduced at the end of Section~\ref{SubSec:TropAlg}.
Since $w$ is a simple point, it lies in the relative interior of a maximal cell $\sigma \in \Sigma$. Thus, for small enough $\epsilon > 0$ we have $w,w + \epsilon u \in \relint(\sigma)$  and therefore $\inw(I) = \initial_{w + \epsilon u}(I)$.
 In particular,  $V\big( \initial_{w}(I) \big) \cap \widetilde{\mathcal{R}}^{n}_{>0} = V\big( \initial_{w+\epsilon u}(I) \big) \cap \widetilde{\mathcal{R}}^{n}_{>0} \neq \emptyset$, which shows (ii).

    To finish the proof, we show that (iii) also implies (i).
    The multiplicity of $\Trop(X)$ at $w$ counts the components of $\inw(X) = V(\inw I)$ with appropriate multiplicities. Since $w$ is a simple point, the scheme $\inw(X)$ is irreducible and generically reduced.
    Together with \cite[Lemma 2.7]{Bossinger2017}, we see that the reduced-induced scheme is a toric variety. Its intersection with the positive orthant $\widetilde{\mathcal{R}}^{n}_{>0}$ is a dense subset
    and hence there is a smooth positive point $\overline{x}$ in $V(\inw (I))$.
    By \Cref{Lemma: Hensel lifting} we have $w \in  \Trop^+(X)$.
    \end{proof}

    \begin{remark}
    Consider the polynomial $f$ from Example~\ref{Ex::ComplexTrop}. By Remark~\ref{Rmk:NotSimplePoint}, $(0,1) \in \Trop(V(f))$ but it is not a simple point. 
    In Example~\ref{Ex:SingTrop}, we computed that $\Trop^+(V(f)) = \emptyset$ and $V(\initial_{(0,1)}(f)) \cap \mathbb{R}^2_{>0} \neq \emptyset$. Thus, this example shows that the implication $(ii) \Rightarrow (i)$ in Theorem~\ref{thm: multiplicity one cones are positive}  might fail for points that are non-simple.

    By \Cref{Prop::CharWeakSigned},  $V(\initial_{(0,1)}(f)) \cap \mathbb{R}^2_{>0} \neq \emptyset$ implies that  $\initial_{(0,1)}(\langle f\rangle) \cap \mathbb{R}_{\geq 0}[x_1,x_2] = \{ 0 \} $. Thus, the implication $(iii) \Rightarrow (i)$ might also fail for non-simple points.

     To see that $(iii) \Rightarrow (ii)$ might fail, consider the point $w=(0,0)$ and the same $f$ as above. We have 
     \[ \initial_{(0,0)}(f) = x_1^2-2x_1+1+x_2, \quad \initial_{(0,1)}( \initial_{(0,0)}(f)) = x_1^2-2x_1+1.\]
      Thus, using Proposition~\ref{Prop::CharWeakSigned}  we conclude that $w=(0,0)$ satisfies (iii) but not (ii).
    \end{remark}

\section{Computing positive tropicalizations}
\label{Sec::Comp}
In this section, we provide methods to compute the positive tropicalization of certain algebraic varieties. Our focus lies on the positive tropicalization of a linear space intersected with a toric variety that we will use in Section~\ref{Sec:PosSolsTrop} and~\ref{Section::SparsePolys}. Moreover, we prove a real analogue of the Transverse Intersection Theorem in Theorem~\ref{thm: weakly positive transverse intersection}.
 
\subsection{Linear spaces}
\label{Sec::LinearSpaces}
First, we consider linear spaces $V$ defined over $\mathbb{R}$. Since by Theorem \ref{Thm::PosLogLimit}, $\Trop^+(V)$ is independent of the choice of the extension field $\mathcal{R}$, in this section we use the field of formal real Puisseux series $\mathbb{R}\{\!\{t \}\!\}$, and its algebraic closure $\mathbb{C}\{\!\{t \}\!\}$.

From Lemma~\ref{Lemma: Hensel lifting}, it directly follows that the notions of positive and weakly positive tropicalization agree for linear spaces. In Proposition \ref{Prop:PositiveVSWeakPositiveTrop}, we give an elementary proof of this statement. 

\begin{proposition}
\label{Prop:PositiveVSWeakPositiveTrop}
Let $V \subseteq (\mathbb{C}\{\!\{t \}\!\}^*)^n$ be an algebraic variety defined by a linear ideal $I \subseteq \mathbb{R}\{\!\{t \}\!\}[x_1^\pm, \dots , x_n^\pm]$. Then $\Trop^+(V) =\Trop^+_\CC(V)$.
\end{proposition}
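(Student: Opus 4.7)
The inclusion $\Trop^+(V) \subseteq \Trop^+_\CC(V)$ is immediate from the definitions and the containment $\mathcal{R}_{>0} \subseteq \mathcal{C}_{>0}$ recorded in~\eqref{Eq:DiffPosIncl}. For the reverse inclusion, the plan is to prove the stronger pre-closure statement $\nu(V \cap \mathcal{C}_{>0}^n) \subseteq \nu(V \cap \mathcal{R}_{>0}^n)$, since taking Euclidean closures then gives the desired containment of tropicalizations. The technical point that makes this work here, and which fails in general (compare \Cref{Ex:SingTrop}), is that $V$ is defined by polynomials with coefficients in the real base-field $\mathbb{R}\{\!\{t \}\!\}$.

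The key tool is the complex-conjugation automorphism $\sigma \colon \mathbb{C}\{\!\{t \}\!\} \to \mathbb{C}\{\!\{t \}\!\}$ acting coefficient-wise on Puiseux series, whose fixed field is exactly $\mathbb{R}\{\!\{t \}\!\}$. Given $w \in \nu(V \cap \mathcal{C}_{>0}^n)$, choose $x \in V \cap \mathcal{C}_{>0}^n$ with $\nu(x) = w$. Because $I \subseteq \mathbb{R}\{\!\{t \}\!\}[x_1, \dots, x_n]$, the ideal $I$ is $\sigma$-stable, hence $\sigma(x) \in V$, and therefore
\[ y := \tfrac{1}{2}\bigl(x + \sigma(x)\bigr) \in V. \]
By construction the coordinates of $y$ are real Puiseux series.

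It remains to verify that $y \in \mathcal{R}_{>0}^n$ and that $\nu(y) = w$. Writing $x_i = \lc(x_i)\, t^{w_i} + (\text{higher-order terms})$, the weak-positivity hypothesis $x \in \mathcal{C}_{>0}^n$ means each leading coefficient $\lc(x_i) \in \mathbb{R}_{>0}$. Applying the real-part operation componentwise preserves the leading term and makes all subsequent coefficients real, so $y_i$ is a positive real Puiseux series with $\nu(y_i) = w_i$. Thus $y \in V \cap \mathcal{R}_{>0}^n$ with $\nu(y) = w$, establishing $\nu(V \cap \mathcal{C}_{>0}^n) \subseteq \nu(V \cap \mathcal{R}_{>0}^n)$ and completing the proof after closure.

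I do not expect a genuine obstacle: linearity of $I$ is not used beyond the fact that $I$ has coefficients in $\mathbb{R}\{\!\{t \}\!\}$, and the only verification required is the elementary observation that taking real parts preserves both positivity of the leading coefficient and the valuation. The conceptual content is simply that complex conjugation, available because the defining ideal is real, converts a weakly positive point into a positive one without changing its valuation.
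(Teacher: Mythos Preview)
Your argument is essentially the paper's: take a weakly positive point, apply coefficient-wise complex conjugation, and average to obtain a genuinely positive real point with the same valuation (the paper uses $x+\overline{x}$ rather than $\tfrac12(x+\overline{x})$, an immaterial difference).

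However, your closing paragraph contains a real error of understanding. You assert that ``linearity of $I$ is not used beyond the fact that $I$ has coefficients in $\mathbb{R}\{\!\{t\}\!\}$,'' but linearity is exactly what justifies the unlabelled step ``$\sigma(x)\in V$, and therefore $y=\tfrac12(x+\sigma(x))\in V$.'' For an arbitrary variety the average of two points need not lie on it; the paper makes this explicit (``Since $I$ is a linear ideal, we have that $x(t)+\overline{x(t)}\in V$''). Moreover, \Cref{Ex:SingTrop} exhibits a hypersurface defined over $\mathbb{R}$ with $\Trop^+(V)=\emptyset$ yet $\Tropw(V)\neq\emptyset$, so the conclusion of the proposition genuinely fails without the linearity hypothesis. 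Your proof is correct once you acknowledge where linearity enters; the commentary should be deleted.
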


\begin{proof}
It suffices to show 
\[ \val \big( V \cap \mathbb{R}\{\!\{t \}\!\}^n_{>0} \big)  =  \val \big( V \cap \mathbb{C}\{\!\{t \}\!\}^n_{>0}\big).\]
 The inclusion $"\subseteq"$ follows immediately from \eqref{Eq:DiffPosIncl}. To prove the other inclusion, let $x(t) \in  V \cap \mathbb{C}\{\!\{t \}\!\}^n_{>0}$ and denote $\overline{ x(t)} = \big( \overline{x_1(t)} , \dots ,\overline{x_n(t)} \big)$ its complex conjugate, where each $\overline{x_i(t)}, \, i = 1 , \dots , n$ is obtained from $x_i(t)$ by taking the complex conjugate of its coefficients.
 
From $I \subseteq \mathbb{R}\{\!\{t \}\!\}[x_1^\pm, \dots , x_n^\pm]$, it follows that $\overline{ x(t)} \in V$. Since  $I$ is a linear ideal, we have that $x(t) + \overline{ x(t)} \in V $. For each $i = 1, \dots , n$, the leading coefficient $\lc(x_i(t))$ is real and positive. Thus, it follows that $x(t) + \overline{ x(t)} \in  \mathbb{R}\{\!\{t \}\!\}_{>0}^n$ and $\val(x(t)) = \val\big( x(t) + \overline{ x(t)} \big)$.
\end{proof}

It was shown by Sturmfels \cite{BerndSolving} that the logarithmic limit set $\LL(V)$ of a complex linear space only depends on the underlying matroid $\underline{M}$ of $V$, leading to the definition of the Bergman fan $\widetilde{B}(\underline{M})$ for any matroid $\underline{M}$.
The real case was studied in \cite{ARDILA2006577}, where the authors introduced the positive Bergman fan $\widetilde{B}^+(M)$ of an oriented matroid. If $M$ can be realized by a real linear space $V_\R\subseteq \R^n$, then
$\widetilde{B}^+(M)$ is shown to be equal to the weakly positive tropicalization $\Trop^+_\mathcal{C}(V_\R)$ \cite[Proposition 4.1]{ARDILA2006577}.

To state this result, we recall some notation and definitions from matroid theory.  We note that throughout we use the min-convention, deviating from \cite{ARDILA2006577}.
We will not recall the exact definition of an oriented matroid, we refer the reader to \cite[Chapter 6.3] {ziegler2012lectures}.

In our notation, an oriented matroid $M$ on the ground set $[n] = \{1, \dots, n\}$ is completely determined by a 
collection $\mathfrak{C}$ of pairs $C = (C^+, C^-)$, where $C^+, C^- \subseteq [n]$, $C^+ \cap C^- = \emptyset$, that satisfies certain axioms.
We call $C$ a signed circuit and $C^+$ and $ C^-$ its positive and negative part respectively.
The signed circuit $C$ has a support, denoted $\underline{C} = C^+ \cup C^-$.
The set of supports $\{\underline{C} \mid C \in \mathfrak{C}\}$ defines an unoriented matroid $\underline{M}$.
For every $w \in \R^n$ and circuit $C$ of $M$ we define the initial circuit
\begin{gather*}
    \inw(C) := (\inw(C)^+, \inw(C)^- ), \text{ where for } *\in \{+,-\} \text{ we denote }\\\inw(C)^* = \{ i \in C^* \mid w_i := \min_{j \in \underline{C}} w_j\}.
\end{gather*}
The set of circuits $\{\inw(C) \mid \ C \in \mathfrak{C}\}$ defines  an oriented matroid, denoted $M_w$ \cite[Proposition 2.3]{ARDILA2006577}.
The positive Bergman fan $\widetilde{B}^+(M)$ of $M$ is the set 
\begin{align*}
    \widetilde{B}^+(M) =\big\{ w \in \R^n \mid \ \forall C\in \mathfrak{C}\colon 
 \inw(C)^+ \neq \emptyset \text{ and } \inw(C)^- \neq \emptyset\big\}.
\end{align*}
The set $\widetilde{B}^+(M)$ can be given the structure of a polyhedral complex,  by identifying those $w, w' \in \widetilde{B}^+(M)$
for which the initial matroids $M_w, M_{w'}$ coincide.
This makes $\widetilde{B}^+(M)$ a polyhedral subcomplex of the Bergman fan
$\widetilde{B}(\underline{M})$, where the latter is endowed with its coarsest polyhedral structure. We call both polyhedral structures the \emph{coarse} structure on $\widetilde{B}^+(M)$ and $\widetilde{B}(\underline{M})$ respectively.

We now recall that 
to every linear space $V\subseteq \R^n$
one can associate an oriented matroid $M$ as follows. To every linear form $l = a_1x_{i_1}+\cdots + a_mx_{i_m}-\left( b_1x_{j_1}+\cdots+   b_kx_{j_k}
 \right) \in \mathbb{R}[x_1, \dots, x_n]$, vanishing on $V$, such that $a_i, b_j>0$ for all $i, j$, we associate the 
 pair $C = (C^+, C^-)$, where
 $C^+ = \{i_1, \dots, i_m\}, \ C^- = \{ j_1, \dots, j_k\}$.
The pairs $C$ for which the support $C^+ \cup C^-$ is minimal with respect to inclusion form the signed circuits of an oriented matroid $M$ and we say that $M$ is realized by the linear space $V$.
 \begin{example}
\label{Ex::BergmanFanofN} 
We consider the kernel $V$ of the matrix
  \[  N = \begin{pmatrix}
    -3 & 1 & -1 & -2 & 2 \\
    -1 & 1 & -1 & -1 & 1
\end{pmatrix}.\]
Then $V$ is spanned by the rows of the matrix
  \[
    \begin{pmatrix}
    v_1 & v_2 & v_3 & v_4 & v_5
    \end{pmatrix} =
  \begin{pmatrix}
    0 & 0 & 0 & 1 & 1 \\
    -1 & 1 & 0 & 2 & 0 \\
    0 & 1 & 1 & 0 & 0
\end{pmatrix},\]
and the inclusion minimal linearly dependent sets of column vectors are
\begin{align*}
    \big\{ &\{v_1,v_2,v_3\}, \phantom{-} \{v_1,v_4,v_5\}, \phantom{-} \{v_2,v_3,v_4,v_5\} \big\}.
\end{align*}
Each such set admits a unique (up to scaling) linear relation, for example
$0 = -v_2 +v_3 +2v_4 -2v_5$.
We associate with it a signed circuit $C$
with positive part $C^+ = \{3,4\}$ and negative part
$C^- = \{2,5\}$ and denote
$C = (\{3,4\}, \{2,5\})$.
Multiplying the linear relation by $-1$, we also have the signed circuit $(\{2,5\}, \{3,4\})$.
In total, the underlying oriented matroid $M$ of $V$ will comprise the following signed circuits:
  \begin{align*}
 \mathfrak{C} = \big\{ & (\{3\}, \{1,2\}),  \phantom{-}
 (\{5\}, \{1,4\}), \phantom{-}(\{2,5\}, \{3,4\}),
 \\ &(\{1,2\}, \{3\}),  \phantom{-}
 (\{1,4\}, \{5\}), \phantom{-}(\{3,4\}, \{2,5\}) \big\}.
 \end{align*}
For the weight vector $w = (0,2,0,2,0)$, we obtain the signed initial circuits
\begin{align*}
     \big\{ &(\{3\},\{1\}), \   (\{5\}, \{1\}), \ (\{5\}, \{3\}),   \\         
            &(\{1\}, \{3\}), \  (\{1\}, \{5\}), \ (\{3\}, \{5\}) \big\}.
\end{align*}
 In particular, $w$ lies in the positive Bergman fan $\widetilde{B}^+(M)$. 
\end{example}
 
The following proposition relates the positive Bergman fan $\widetilde{B}^+(M)$ to the logarithmic limit set $\LL^+(V)$ of $V$ and to the positive tropicalization $\Trop^+(V)$.
A related result was shown in \cite[Theorem 3.14]{Tabera::Bases}.
\begin{proposition}
Let $M$ be an oriented matroid that is realized by the linear space
$V\subseteq \R^n$. The following sets coincide:
\begin{itemize}
    \item[(i)] the positive tropicalization $\Trop^+(V)$,
     \item[(ii)] the logarithmic limit set $\LL^+(V)$,
    \item[(iii)] the positive Bergman fan $\widetilde{B}^+(M)$.
\end{itemize}
    \end{proposition}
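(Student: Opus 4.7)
The plan is to deduce the triple equality by chaining together results already appearing earlier in the excerpt together with the main theorem of Ardila--Klivans--Williams. Concretely, for a real linear space $V \subseteq \R^n$ realizing the oriented matroid $M$, I would establish the chain
\[
\Trop^+(V) \;=\; \Trop^+_\CC(V) \;=\; \widetilde{B}^+(M) \;=\; \LL^+(V),
\]
and read off the equalities (i) $=$ (ii) $=$ (iii) as consequences.

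First I would use Proposition~\ref{Prop:PositiveVSWeakPositiveTrop} to identify the positive and weakly positive tropicalizations; this step is what allows us to leave the semi-algebraic world and pass to a statement about the weakly positive tropicalization, which is accessible through initial ideals and signed circuits. Next I would invoke \cite[Proposition~4.1]{ARDILA2006577}, which identifies $\Trop^+_\CC(V)$ with the positive Bergman fan of the oriented matroid $M$ associated to $V$. The only care required here is a translation between the max-convention of \cite{ARDILA2006577} and the min-convention in force throughout our paper, which amounts to negating the weight vector in the definition of $\inw(C)$ and has already been built into the definition of $\widetilde{B}^+(M)$ recalled above. The verification that the oriented matroid associated to $V$ in our set-up matches the one of \cite{ARDILA2006577} reduces to comparing the two descriptions of signed circuits via the linear forms vanishing on $V$, and is immediate from the construction preceding the proposition.

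Finally, for $\Trop^+(V) = \LL^+(V)$ I would apply Theorem~\ref{Thm::PosLogLimit} to the semi-algebraic set $V \cap \R_{>0}^n$, choosing as extension field either the Hardy field $H(\R_{\text{an}^*})$ or the field of formal real Puiseux series $\R\{\!\{t\}\!\}$. Both are real closed fields of rank one extending $\R$ with a non-trivial, non-Archimedian valuation, and by the convention fixed after Theorem~\ref{Thm::PosLogLimit} the resulting positive tropicalization is precisely $\Trop^+(V)$.

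I do not anticipate a substantive obstacle, since the three equalities in the chain are essentially on record; the only delicate point is the min/max convention bookkeeping in the citation to \cite{ARDILA2006577}, which is routine once the definition of $\inw(C)$ is matched up.
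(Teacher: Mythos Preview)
Your proposal is correct and follows essentially the same route as the paper: both combine Proposition~\ref{Prop:PositiveVSWeakPositiveTrop}, \cite[Proposition~4.1]{ARDILA2006577}, and Theorem~\ref{Thm::PosLogLimit} to obtain the chain $\widetilde{B}^+(M) = \Trop^+_\CC(V) = \Trop^+(V) = \LL^+(V)$, differing only in the order of presentation. Your remark on the min/max convention is apt but already absorbed into the paper's definition of $\inw(C)$.
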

\begin{proof}
Let $V_{\mathbb{C}\{\!\{t \}\!\}} \subseteq \mathbb{C}\{\!\{t \}\!\}^n$ be the Zariski closure of $V$ in $\mathbb{C}\{\!\{t \}\!\}^n$. From \cite[Proposition 4.1]{ARDILA2006577} it follows that the positive Bergman fan $\widetilde{B}^+(M)$ agrees with the weakly positive tropicalization $\Trop_\CC^+(V_{\mathbb{C}\{\!\{t \}\!\}})$. By Proposition \ref{Prop:PositiveVSWeakPositiveTrop}, we have 
\[\Trop^+_\CC(V_{\mathbb{C}\{\!\{t \}\!\}}) = \Trop^+(V_{\mathbb{C}\{\!\{t \}\!\}}). \]
By Theorem \ref{Thm::PosLogLimit}, the positive tropicalization of a variety defined over $\mathbb{R}$ is independent of the choice of base extension, and is equal to the logarithmic limit set. It follows that the sets (i)-(iii) coincide.
\end{proof}

Let $M$ be an arbitrary oriented matroid.
We now describe a procedure to compute 
the positive Bergman fan $\widetilde{B}^+(M)$ from the Bergman fan $\widetilde{B}(\underline{M})$ associated with the matroid $\underline{M}$. \Cref{algorithm} works by filtering the maximal cones of $\widetilde{B}(\underline{M})$.
The correctness of this approach follows from the observation that $\widetilde{B}^+(M)$, endowed with the coarse polyhedral structure, is a subfan of $\widetilde{B}(\underline{M})$.
Moreover, \cite[Corollary 5.3]{ARDILA2006577}  implies that $\widetilde{B}^+(M)$ is a pure subfan of dimension rank of $M$ if $\widetilde{B}^+(M)$ is nonempty.
An algorithm to compute $\widetilde{B}(\underline{M})$
is described in 
\cite{RinconComputingTropicalLinearSpaces}.
An implementation can be found in the software package $\texttt{Polymake}$ \cite{POLYMAKE}. 
The output of the algorithm is a simplicial polyhedral complex supported on $\widetilde{B}(\underline{M})$, called the cyclic Bergman fan.

\begin{algorithm*}
\caption{Computing maximal cones of $\widetilde{B}^+(M)$}\label{algorithm}
\begin{algorithmic}[1]
\State $\mathfrak{C} \gets \textit{ signed circuits of M }$
\State $\underline{M} \gets \textit{ underlying matroid of } M$
\State $\Sigma \gets \textit{ maximal cones of }\widetilde{B}(\underline{M})$
\State $\Sigma^+ \gets \emptyset$
\For{$\sigma \in \Sigma$}
    \State $w \gets \textit{ point in the relative interior of } \sigma$
    \State $IsIn\Sigma^+ \gets \operatorname{True}$
    \For{$C \in \mathfrak{C}$}
    \If {$\textit{length}( \inw (C)^+ ) = 0 
    \textbf{ or } \textit{length}( \inw(C)^- ) = 0$}
        \State $IsIn\Sigma^+ \gets \operatorname{False}$
    \EndIf
    \EndFor
            \If {$IsIn\Sigma^+$}
        \State $\Sigma^+ \gets \Sigma^+ \cup \{\sigma\}$
\EndIf
\EndFor
\State \textbf{return} $\Sigma^+$
\end{algorithmic}
\end{algorithm*}

\begin{example}
We again consider \Cref{Ex::BergmanFanofN},
and finally compute the positive Bergman fan of the oriented matroid $M$. 
Using $\texttt{Polymake}$~\cite{POLYMAKE} we compute the Bergman fan $\widetilde{B}(\underline{M})$.
Its rays are the vectors
\[
\{\rho_1, \dots, \rho_7\} = 
\left\{
 \begin{pmatrix}0\\ 1\\ 0\\ 0\\ 0 \end{pmatrix},
 \begin{pmatrix}0\\ 0\\ 0\\ 1\\ 0 \end{pmatrix},
 \begin{pmatrix}0\\ 0\\ 0\\ -1\\ -1 \end{pmatrix},
 \begin{pmatrix}0\\ -1\\ -1\\ 0\\ 0 \end{pmatrix},
 \begin{pmatrix}0\\ -1\\ -1\\ -1\\ -1 \end{pmatrix},
 \begin{pmatrix}0\\ 0\\ 0\\ 0\\ 1 \end{pmatrix},
 \begin{pmatrix}0\\ 0\\ 1\\ 0\\ 0 \end{pmatrix}
 \right\}.
\]
For each pair of indices $1 \leq i, j \leq 7$, we denote by $\operatorname{cone}(i,j) = \R \cdot \mathbf{1} + \R_{\geq 0}(\rho_i, \rho_j)$ the Minkowski sum of the span of the all ones vector, and of the positive hull of $\rho_i$ and $\rho_j$. Then the maximal cones of $\widetilde{B}(\underline{M})$ are

\begin{equation*}
  \begin{aligned}
\{\sigma_1, \dots, \sigma_{10}\} = 
&\{
\operatorname{cone}(1, 2), \
\operatorname{cone}(1, 3), \
\operatorname{cone}(2, 4), \
\operatorname{cone}(3, 5), \
\operatorname{cone}(4, 5), \\
&\operatorname{cone}(1, 6), \
\operatorname{cone}(4, 6), \
\operatorname{cone}(2, 7), \
\operatorname{cone}(3, 7), \
\operatorname{cone}(6, 7) \}.
  \end{aligned}
\end{equation*}
\end{example}
The positive Bergman fan $\widetilde{B}^+(M)$ is equal to the pure subfan of $\widetilde{B}(\underline{M})$ with maximal cones $\sigma_1,\, \sigma_2 ,\, \sigma_3 ,\, \sigma_4  ,\, \sigma_5$.

\begin{remark}
Since the initial submission of this paper, a version of Algorithm~\ref{algorithm} has been implemented by Yue Ren in the computer algebra system \texttt{Oscar.jl} \cite{OSCARbook}. For a linear ideal $I \subseteq \mathbb{R}[x_1, \dots, x_n]$, the function \texttt{positive\_tropical\_variety()} computes the positive Bergman fan $\widetilde{B}^+(M) =  \Trop^+(V(I))$.

In this implementation, instead of checking signed circuits, the code tests whether  $\inw(I)$ contains polynomials with only positive coefficients. By Proposition~\ref{Prop::CharWeakSigned}, this characterization is equivalent to $w \in \Tropw(V(I))=\Trop^+(V(I))$.
\end{remark}

\color{black}

\subsection{Functoriality of monomial maps}
\label{section: functoriality}

We study some natural functoriality properties of the weakly positive and the  positive tropicalization with respect to a monomial map. Let $A = (a_1 | \dots| a_r) \in \mathbb{Z}^{n \times r}$ be an integer matrix of rank $n \leq r$ inducing a finite monomial map
\begin{align}
\label{Eq:MonoMap}
    \varphi_A\colon (\CC^*)^n \longrightarrow (\CC^*)^r, \quad x \longmapsto \varphi_A(x) = \left(x^{a_1}, \dots, x^{a_r}\right).
\end{align}
The Zariski closure of the image of $\varphi_A$ in $\mathcal{C}^r$ is called an \emph{affine toric variety}, denoted by~$Y_A$. The following result is a generalization of \cite[Proposition 2.3]{TelenPos} to the field $\mathcal{C}$.

\begin{proposition}
\label{prop: positive part of toric vars}
For $Y_A =  \varphi_A \left( (\CC^*)^n \right)  \subseteq (\mathcal{C}^*)^r$ a very affine toric variety defined by a matrix $A \in \mathbb{Z}^{n \times r}$ of rank $n \leq r$, we have
\[ Y_A \cap \RR^r_{>0} = \varphi_A(\RR^n_{>0}).\]
\end{proposition}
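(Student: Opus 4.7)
The first inclusion $\varphi_A(\mathcal{R}^n_{>0}) \subseteq Y_A \cap \mathcal{R}^m_{>0}$ is immediate, since each coordinate $x^{a_i}$ of $\varphi_A(x)$ is a product of positive elements of $\mathcal{R}$ and $\varphi_A(x) \in Y_A$ by construction. For the reverse inclusion, my plan is to combine the toric-ideal description of $Y_A$ with the fact that $\mathcal{R}_{>0}$ is a divisible torsion-free abelian group under multiplication. The divisibility follows from $\mathcal{R}$ being real closed: for any $y > 0$ in $\mathcal{R}$ and any $N \in \mathbb{Z}_{>0}$, the polynomial $z^N - y$ has a unique positive root by the intermediate value theorem and the strict monotonicity of $z \mapsto z^N$ on $\mathcal{R}_{>0}$. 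Consequently, rational powers $y^q$ with $q \in \mathbb{Q}$ are well-defined in $\mathcal{R}_{>0}$ and behave additively in the exponent.

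Set $L = \ker\bigl(A \colon \mathbb{Z}^m \to \mathbb{Z}^n\bigr)$. Since $Y_A$ is the Zariski closure of $\varphi_A((\mathcal{C}^*)^n)$, its defining ideal is the toric ideal generated by the binomials $y^{v^+} - y^{v^-}$ for $v \in L$, so every $y \in Y_A \cap (\mathcal{R}^*)^m$ satisfies $y^{v^+} = y^{v^-}$ for all $v \in L$. Now fix $y \in Y_A \cap \mathcal{R}^m_{>0}$. Because $A$ has rank $n$, I choose $n$ indices $i_1, \dots, i_n$ whose columns $a_{i_1}, \dots, a_{i_n}$ form a $\mathbb{Q}$-basis of $\mathbb{Q}^n$, and let $(q_{jk}) = (a_{i_1} \mid \dots \mid a_{i_n})^{-1} \in \mathbb{Q}^{n \times n}$. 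Define
\[ x_k := \prod_{j=1}^n y_{i_j}^{q_{jk}} \in \mathcal{R}_{>0}, \qquad k = 1, \dots, n. \]
By construction $x^{a_{i_j}} = y_{i_j}$ for every $j$.

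It remains to verify $x^{a_i} = y_i$ for the remaining indices $i$. I write $a_i = \sum_j c_j a_{i_j}$ with $c_j \in \mathbb{Q}$, clear denominators by some $N \in \mathbb{Z}_{>0}$, and observe that the vector $v := N e_i - \sum_j Nc_j\, e_{i_j}$ lies in $L$. The corresponding binomial relation gives $y_i^N = \prod_j y_{i_j}^{Nc_j} = (x^{a_i})^N$; uniqueness of positive $N$-th roots in $\mathcal{R}$ then forces $y_i = x^{a_i}$. The only non-routine input is the existence and uniqueness of positive $N$-th roots, which is exactly what real-closedness of $\mathcal{R}$ supplies, so no genuine obstacle arises beyond the generalization from $\mathbb{R}$ to $\mathcal{R}$.
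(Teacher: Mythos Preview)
Your proof is correct, but it proceeds along a genuinely different route from the paper's. The paper does not construct the preimage $x$ directly from $y$; instead it passes through the algebraic closure $\mathcal{C}$. It first observes that $Y_A \cap (\mathcal{C}^*)^m = \operatorname{im}(\varphi_A)$, so any $y \in Y_A \cap \mathcal{R}^m_{>0}$ is $\varphi_A(x)$ for some $x \in (\mathcal{C}^*)^n$. It then defines a norm $\|z\| = \sqrt{z\,\operatorname{conj}(z)} \in \mathcal{R}_{>0}$ on $\mathcal{C}^*$ and uses its multiplicativity to conclude $y = (\|y_1\|,\dots,\|y_m\|) = \varphi_A(\|x_1\|,\dots,\|x_n\|)$, producing a positive preimage in one stroke. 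Your argument, by contrast, stays entirely inside $\mathcal{R}$: you exploit the binomial description of the toric ideal together with the divisibility of the multiplicative group $\mathcal{R}_{>0}$ (via unique positive $N$-th roots) to write down $x$ explicitly from a chosen basis of columns. Your approach is more elementary and constructive, and it makes the role of real-closedness very transparent; the paper's approach is shorter and avoids any explicit manipulation of the lattice $\ker A$, at the cost of invoking the algebraic closure and the conjugation automorphism.
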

\begin{proof}
The inclucion,  $Y_A \cap \RR^r_{>0} \supseteq \varphi_A(\RR^n_{>0})$ follows immediately. For the other inlcusion it is to show that $\operatorname{im}(\varphi_A)\cap \RR^r_{>0} \subseteq \varphi_A(\RR^n_{>0})$. To this end, denote $\operatorname{conj}\colon \CC \longrightarrow \CC$ the unique non-trivial $\RR$-algebra automorphism of $\CC$. For each $z \in \mathcal{C}^*$, the roots of the univariate polynomial $X^2- z \operatorname{conj}(z) \in \mathcal{R}[X]$ lie in $\mathcal{R}$. Therefore, $z \operatorname{conj}(z) \in \RR_{> 0}$. Define the norm of $z$ as the positive root of $z \operatorname{conj}(z)$, i.e. 
\[\left \| z \right \| := \sqrt{z \operatorname{conj}(z)} \in \RR_{>0}.\]

Let $(y_1,\dots y_r) = \varphi(x_1, \dots, x_n) \in \RR^r_{>0}$, where $x_1, \dots, x_n \in \CC^{*}$. Since taking the norm commutes with multiplication, we have 
\[(y_1, \dots, y_r) = (\left \| y_1 \right \|,\dots , \left \| y_r \right \|) = \varphi(\left \| x_1 \right \|, \dots, \left \| x_n \right \|) \in \varphi_A(\RR^n_{>0}),\]
finishing the proof.
\end{proof}

For the rest of this section, let $I \subseteq \CC[x_1^\pm, \dots, x_n^\pm]$ be a prime ideal defining a variety $X \subseteq \mathbb{G}^n_\CC$ and let
$J \subseteq \CC[x_1^\pm, \dots, x_r^\pm]$ be a prime ideal defining $Y \subseteq \mathbb{G}^r_\CC$
with $\varphi_A(X) \subseteq Y$.

\begin{lemma}
\label{lemma: induced map of initial degs}
For every weight vector $w \in \mathbb{R}^n$,
the monomial map $\varphi_A$ induces a monomial map on the initial degenerations
\[
\overline{\varphi_A}\colon \inw (X) \longrightarrow \initial_{A^{\top}w} (Y).
\]
Furthermore, if $X = \varphi_A^{-1}(Y)$, then
the schemes $\inw X $ and 
$\overline{\varphi_A}^{-1}(\initial_{A^\top w} (Y))$ agree.
\end{lemma}
\begin{proof}
We first treat the case $w = 0$.
Let $\mathcal{X}$ be the closure of $X$ in the algebraic torus $\mathbb{G}^n_{\CC^\circ} = \operatorname{spec}(\CC^\circ[x_1^\pm, \dots, x_n^\pm])$ and 
let $\mathcal{Y}$ be the closure of $Y$ in $\mathbb{G}^r_{\CC^\circ}$.
The initial degenerations $\initial_0(X)$ and $\initial_0(Y)$ are equal to the special fibers $X_s = \mathcal{X} \times_{\CC^\circ} \operatorname{spec}(\widetilde{\CC})$ and $Y_s = \mathcal{Y} \times_{\CC^\circ} \operatorname{spec}(\widetilde{\CC})$ respectively.
Note that the map $\varphi_A$ naturally extends to a map
\[\varphi_A\colon \operatorname{spec}(\CC^\circ[x_1^\pm, \dots, x_n^\pm]) \longrightarrow \operatorname{spec}(\CC^\circ[x_1^\pm, \dots, x_r^\pm])\] over the valuation ring.
It induces a morphism
\[\overline{\varphi_A}\colon \operatorname{spec}(\widetilde{\CC}[x_1^\pm, \dots, x_n^\pm]) \longrightarrow \operatorname{spec}(\widetilde{\CC}[x_1^\pm, \dots, x_r^\pm])\] on the special fibers.
By continuity of $\varphi_A$, the image of $\initial_0(X)$ is contained in $\initial_0(Y)$, showing the first part of statement.

We now assume $X = \varphi_A^{-1}(Y)$ and show the second part of the Lemma.
To this end, by \Cref{prop: technical result on initial degs} it suffices to show show the equality $\mathcal{X} = \varphi_A^{-1}(\mathcal{Y})$.
This follows from \cite[\href{https://stacks.math.columbia.edu/tag/0CMK}{Lemma 101.38.5}]{stacks-project}.
Alternatively, by flatness over $\CC^\circ$, to see that the natural scheme-theoretic inclusion
$\mathcal{X} \subseteq \varphi_A^{-1}(\mathcal{Y})$ is an equality, 
we may restrict to the generic fiber where equality follows by construction.
In order to reduce the case for a general choice of $w$ to the case $w = 0$ we note that  $\initial_0(t^{-w}X) =  \inw(X)$ and 
$\initial_0(t^{-A^\top w}Y) = \initial_{A^\top w} (Y)$, finishing the proof.
\end{proof}

Both the positive and the weakly positive tropicalization are functorial with respect to monomial maps. We will use this property to prove Corollary~\ref{Cor::PosTropMono} and \Cref{thm: weakly positive transverse intersection}.
\begin{proposition}
\label{prop: functoriality of monomial maps}
let $I \subseteq \CC[x_1^\pm, \dots, x_n^\pm]$ be a prime ideal defining a variety $X \subseteq \mathbb{G}^n_\CC$ and let
$J \subseteq \CC[x_1^\pm, \dots, x_r^\pm]$ be a prime ideal defining $Y \subseteq \mathbb{G}^r_\CC$
with $\varphi_A(X) \subseteq Y$.

    The linear images of the positive and of the weakly positive tropicalization of $X$ are contained in the respective tropicalizations of $Y$:
    \begin{align*}
            A^\top \Trop^+(X) \subseteq \Trop^+(Y), \qquad A^\top\Tropw(X) \subseteq \Tropw(Y).
    \end{align*}
    If $\varphi_A(X \cap \RR^n_{> 0 }) = Y\cap\RR^r_{> 0}$,
    then the first inclusion is an equality.
    Furthermore, if both $Y$ is contained in the image of $\varphi_A$,
    and $X = \varphi_A^{-1}(Y)$,
    then the second inclusion is also an equality.
\end{proposition}
\begin{proof}
    First we show the inclusion $A^\top \Trop^+(X) \subseteq \Trop^+(Y)$. The proof for the inclusion $A^\top \Tropw(X) \subseteq \Tropw(Y)$ is analogous.
    Let $x$ be any point in $X \cap \mathcal{R}^n_{>0}$ and let $w = \nu(x) \in \Trop^+(X)$. 
    Direct computation shows $\nu(\varphi_A(x)) = A^\top w$, giving the inclusion.

    We now assume $\varphi_A(X \cap \RR^n_{> 0 }) = Y\cap\RR^r_{> 0}$ and show the inclusion  $\Trop^+(Y) \subseteq A^\top\Trop^+(X)$.
    Let $y$ be in $Y \cap \RR^r_{> 0}$. By assumption there exists a point $x \in X \cap \RR^n_{>0}$ with $\varphi_A(x) = y$. Then $A^\top \nu(x) = \nu(y)$, finishing this part of the proof.

    We now assume $\varphi_A(X \cap \RR^n_{> 0 }) = Y\cap\RR^r_{> 0}$ 
    and $X = \varphi_A^{-1}(Y)$
    in order to finally show the desired equality $A^\top\Tropw(X) = \Tropw(Y)$ for the weakly positive tropicalization.
    Let $y \in Y, \ u  = \nu(y)$ such that
    $\initial_{u} (Y) $ contains a real point with positive entries.
    By \Cref{Prop::CharWeakSigned} these points form a dense subset of $\Tropw(\varphi_A(Y))$.
     Let $x \in \varphi_A^{-1}(y)$ and let $w = \nu(x)$.
    By \Cref{lemma: induced map of initial degs} the initial degeneration $\inw(X)$ is the preimage of $\initial_{u} (Y)$ under a monomial map. By \Cref{prop: positive part of toric vars} $\inw(X)$ contains a positive point, showing $w \in \Tropw(X)$.
    This shows the inclusion $\Tropw(Y) \subseteq A^\top\Tropw(X)$ and finishes the proof.
\end{proof}

As an immediate consequence of \Cref{prop: functoriality of monomial maps}, we obtain a description of the positive tropicalization of  toric varieties.
Let $A = (a_1 | \cdots | a_r) \in \mathbb{Z}^{n \times r}$ be a matrix of rank $n \leq r$, defining the very affine toric variety 
$Y_A = \{x^A = (x^{a_1}, \dots, x^{a_r}) \mid \ x \in (\mathcal{C}^*)^n\} \subseteq (\mathcal{C}^*)^r$.
\begin{corollary}
\label{Cor::PosTropMono}
    The positive tropicalization of $Y_A$ is the row span of $A$, that is
     \[\Trop^+\big( 
 Y_A \big) = \rowspan(A).\]
\end{corollary}
\begin{proof}
    This follows from \Cref{prop: positive part of toric vars} and \Cref{prop: functoriality of monomial maps} for $X = (\CC^*)^n$, $Y = Y_A$.

    Alternatively, observe that
    \[ \rowspan(A) \subseteq \Trop^+(Y_A) \subseteq \Trop(Y_A) = \rowspan(A),\]
    where the first inclusion follows from $\nu(\varphi_A(x)) = A^\top \nu(x)$ for $x \in \mathcal{R}^n_{>0}$. The second inclusion holds by definition. For the last equality, we refer to  \cite[Theorem 5.5.1]{maclagan2015introduction}.
\end{proof}

\subsection{Transverse intersections}

In this section, we prove a real analogue of the Transverse Intersection Theorem, enabling the efficient decomposition and computation of positive tropicalizations.
    Many algebraic varieties of interest can be expressed as intersections of simpler
    constituent varieties. Ideally, we would like to compute the tropicalizations of these complicated varieties through the tropicalization of their simpler components.
     For instance, if the tropicalization of a linear space and a toric variety satisfy a certain transversality condition, then the positive tropicalization of their intersection can be computed by determining the positive tropicalization of the linear space and the positive tropicalization of the toric variety, as discussed in Sections~\ref{Sec::LinearSpaces} and~\ref{section: functoriality}, and then intersecting the resulting two objects. Specifically, we require the following transversality assumption.

\begin{definition}
        Let $I,J \subseteq \mathcal{C}[x_1^\pm, \dots, x_n^\pm]$ be prime ideals. We say that the weakly positive tropicalizations \emph{$\Tropw(V(I))$ and $ \Tropw(V(J))$ meet transversally at $w \in \R^n$} if there exist polyhedral complexes
     $\Sigma_1, \Sigma_2$ that are supported on $\Tropw(V(I))$ and $ \Tropw(V(J))$ respectively with the following property: The affine hull of the unique cones
     $\sigma_1 \in \Sigma_1$ and $\sigma_2 \in \Sigma_2$, that contain $w$ in their relative interior, is $\R^n$.
\end{definition}

\begin{theorem}
\label{thm: weakly positive transverse intersection}
    Let $I, J \subseteq \mathcal{C}[x_1^\pm, \dots, x_n^\pm]$ be prime ideals defining the varieties $X = V(I)$, $Y = V(J)$. If  the weakly positive tropicalizations $\Tropw(X)$ and $ \Tropw(Y)$  meet transversally at  $w \in \R^n$,
    then $w $ lies in the weakly positive tropicalization $ \Tropw(X \cap Y )$.
    If in addition $w$ is a simple point of both complex tropicalizations $\Trop(X)$ and $ \Trop(Y)$, then
    $w$ lies in the positive tropicalization $\Trop^+(X \cap Y)$.
\end{theorem}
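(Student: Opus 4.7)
The plan is to establish the weakly positive statement $w \in \Tropw(X\cap Y)$ first, by combining the ideal-theoretic characterization in \Cref{Prop::CharWeakSigned} with a simultaneous-perturbation argument enabled by transversality, and then to upgrade to $w \in \Trop^+(X\cap Y)$ via \Cref{thm: multiplicity one cones are positive}.

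For the weakly positive claim, \Cref{Prop::CharWeakSigned} furnishes weights $u_X, u_Y \in \R^n$ such that $V(\initial_{u_X}\initial_w(I))$ and $V(\initial_{u_Y}\initial_w(J))$ each admit a point in $\widetilde{\mathcal{R}}^n_{>0}$. Write $L_i := \operatorname{aff}(\sigma_i) - w$, so that transversality reads $L_1 + L_2 = \R^n$. Since moving $w$ within $L_i$ does not alter the corresponding initial ideal, we may replace $u_X$ by its component transverse to $L_1$ and $u_Y$ by its component transverse to $L_2$, and then set $u := u_X + u_Y$. The classical Transverse Intersection Theorem (as in \cite[Chapter~3]{maclagan2015introduction}) applied first at $w$ and then at $u$ yields the ideal identity
\[
\initial_u \initial_w(I+J) \;=\; \initial_u\initial_w(I) + \initial_u\initial_w(J),
\]
so that a common positive zero of the right-hand summands is a positive zero of $\initial_u\initial_w(I+J)$. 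The existence of such a common positive zero follows from \Cref{prop: positive part of toric vars}: after intersecting with $(\widetilde{\mathcal{R}}^*)^n$, the initial degenerations are toric schemes, and their positive parts are monomial images of positive orthants whose tropicalizations $L_1,L_2$ satisfy $L_1+L_2=\R^n$; transversality of these parametrized positive tori then forces their intersection to be nonempty. This exhibits $w \in \Tropw(V(I+J))$ via \Cref{Prop::CharWeakSigned}.

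For the positive statement, the assumption that $w$ is a simple point of both $\Trop(X)$ and $\Trop(Y)$ propagates to $X \cap Y$: the multiplicity formula accompanying the classical Transverse Intersection Theorem shows that the multiplicity at $w$ of $\Trop(X\cap Y)$ is the product of the two multiplicities at $w$, hence equals one. Applying \Cref{thm: multiplicity one cones are positive} to $X\cap Y$ at the simple point $w$, and using the weakly positive membership just established, yields $w \in \Trop^+(X \cap Y)$.

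I expect the main obstacle to be rigorously constructing the common positive zero in $V(\initial_u\initial_w(I)) \cap V(\initial_u\initial_w(J))$: translating transversality at the tropical level into a genuine positive-real intersection of the initial degenerations is the crux of the argument, and is where the toric description of \Cref{prop: positive part of toric vars} and the freedom given by $L_1+L_2=\R^n$ must be leveraged carefully. All other steps reduce either to classical tropical intersection theory or to previously established results in the paper.
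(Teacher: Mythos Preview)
Your overall strategy mirrors the paper's, but there are two genuine gaps.

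For the first (weakly positive) statement, your construction of a common positive zero of $V(\initial_u\initial_w I)$ and $V(\initial_u\initial_w J)$ appeals to \Cref{prop: positive part of toric vars} via the claim that these initial degenerations are toric. That claim requires multiplicity one (this is precisely how \cite[Lemma~2.7]{Bossinger2017} is used inside the proof of \Cref{thm: multiplicity one cones are positive}), an assumption \emph{not} available in the first part of the theorem. The paper circumvents this by performing a monomial change of coordinates $\varphi_A$ chosen so that, after the change, $\initial_w(I)$ has generators in the variables $x_{s+1},\dots,x_n$ while $\initial_w(J)$ has generators in the disjoint set $x_{r+1},\dots,x_s$. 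Once the two initial ideals live in disjoint sets of variables, one simply splices the coordinates of the separate positive zeros $z',z''$ to obtain a common positive zero $z=(z''_1,\dots,z''_s,z'_{s+1},\dots,z'_n)$, with no toric hypothesis needed. Your additive combination $u=u_X+u_Y$, without first separating variables, does not by itself produce this common positive zero; and incidentally, for $\initial_u\initial_w(I)=\initial_{u_X}\initial_w(I)$ you need $u_Y\in L_1$ (and $u_X\in L_2$), not merely ``transverse to $L_2$''.

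For the second statement, your claim that the multiplicity of $\Trop(X\cap Y)$ at $w$ equals $m_1 m_2 = 1$ is incorrect: the transverse (equivalently, stable) intersection multiplicity is $m_1 m_2 \cdot [\Z^n : N_{\sigma_1}+N_{\sigma_2}]$, and the lattice index can exceed one. For instance, with $X=V(x^2-y)$ and $Y=V(y-1)$ in $(\CC^*)^2$ the index is $2$, so $w=0$ is not a simple point of $\Trop(X\cap Y)$ even though it is simple for both $\Trop(X)$ and $\Trop(Y)$. Consequently \Cref{thm: multiplicity one cones are positive} cannot be applied to $X\cap Y$ directly. The paper instead argues that, still after the coordinate change, the separately smooth positive points of $\initial_u\initial_w X$ and $\initial_u\initial_w Y$ (supplied by \Cref{thm: multiplicity one cones are positive} applied to $X$ and to $Y$ individually) splice to a smooth positive point of $\initial_u\initial_w(X\cap Y)$, and then invokes \Cref{Lemma: Hensel lifting}; an extra step (via \Cref{lemma: induced map of initial degs}) is needed to check smoothness survives the monomial coordinate change.
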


\begin{proof}
    For the first part of the proof we follow the proof of the Transverse Intersection Theorem \cite[Theorem 3.4.12]{maclagan2015introduction}, using the functoriality properties from \Cref{section: functoriality}.
    We fix maximal cells $\sigma_1, \sigma_2$ of $\Tropw(X)$ and $\Tropw(Y)$ respectively, containing $w$ in the relative interior. Let $w+L_1$ and $w+L_2$ be the affine hulls of $\sigma_1$ and $\sigma_2$ respectively, where $L_1$ and $L_2$ are linear spaces.
    By assumption we have $L_1+L_2 = \R^n$.
    We fix a linear basis $a_1, \dots, a_r \in \mathbb{Z}^n$  of $L_1 \cap L_2$ with integer entries, and extend it to a basis 
    $a_1, \dots, a_r, a_{r+1}, \dots,  a_{s}$ of $L_1$ and a basis
    $a_1, \dots, a_r, a_{s+1}, \dots, a_n$ of $L_2$ respectively. In particular $r = \operatorname{dim}(L_1 \cap L_2), \ s = \operatorname{dim}(L_1).$
    Then the integer matrix $A = (a_1| \dots |a_n) \in \mathbb{Z}^{n \times n}$
    induces a monomial map $\varphi_A \colon\mathbb{G}^n_\RR \longrightarrow \mathbb{G}^n_\RR, \ x \longmapsto (x^{a_i})_i$.

    We now denote $\widetilde{X} = \varphi_A^{-1}(X)$ with defining ideal $\widetilde{I} = \varphi_A^*(I)$ and $\widetilde{Y} = \varphi_A^{-1}(Y)$, $\widetilde{J} = \varphi_A^*(J)$.
    By \Cref{prop: functoriality of monomial maps}, we have $\Tropw(\widetilde{X}) = (A^\top)^{-1} \Tropw(X)$ and $ \Tropw(\widetilde{Y}) = (A^\top)^{-1} \Tropw(Y)$. Now $\Tropw(\widetilde{X})$ and $\Tropw(\widetilde{Y})$ intersect transversally at $\widetilde{w} = (A^\top)^{-1}w$ and we are left with showing that $\widetilde{w}$ is contained in $\Tropw(\widetilde X \cap \widetilde Y)$. For the rest of the proof we replace $I$ with $\widetilde{I}$, $J$ with $\widetilde{J}$ and $w$ with $\widetilde{w}$.

    By the proof of \cite[Theorem 3.3.8]{maclagan2015introduction}, we can find generators for $\initial_{ w} ( I)$ in the variables $x_{s+1}, \dots, x_{n}$. Similarly, we can find generators for $\initial_{ w}  (J)$ in the variables $x_{r+1}, \dots, x_s$.
    Since ${w}$ lies in $\Tropw( X)$ and $\Tropw( Y)$, by \Cref{Prop::CharWeakSigned} there exist weight vectors $u', u''$ such that the positive vanishing loci
    $V(\initial_{u'} \left( \initial_{ w}  (I)) \right) \cap \widetilde{\mathcal{R}}^n_{>0}$ and $V(\initial_{u''}  \left( \initial_{ w} ( J)) \right) \cap \widetilde{\mathcal{R}}^n_{>0}$ contain elements $z'$ and $z''$ respectively. Combining the entries of $u'$ and $u''$ yields the new weight vector ${u} = (u''_{1}, \dots, u''_{s}, u'_{s+1}, \dots, u'_{n})$. By homogeneity we have $ \initial_{u'} \left( \initial_{ w}  (I) \right) = \initial_{{u}}  \left( \initial_{ w}  (I) \right)$ and similarly 
    $\initial_{u''} \left( \initial_{ w}  (J) \right) = \initial_{{u}} \left( \initial_{ w} ( J) \right)$.
    Using the homogeneity of $\initial_{{w}}  (I)$ and $\initial_{{w}} ( J)$ we may apply an affine version of \cite[Lemma 3.4.10]{maclagan2015introduction} to obtain
    \[
    \initial_{{u}} \left( \initial_{{w}} ( I +  J) \right)
     =     \initial_{{u}}\left(\initial_{{w}} ( I ) \right)+ 
        \initial_{{u}}\left( \initial_{{w}} ( J) \right).
    \]
    By construction, the point $z = (z_1'', \dots, z_s'', z_{s+1}', \dots, z_n')$ lies in $V(\initial_{{u}} \left( \initial_{ w}( I +  J)) \right) \cap \widetilde{\mathcal{R}}^n_{>0}$, showing
    that $w$ is contained in $\Tropw({X} \cap {Y})$. This finishes the first part.
    
    For the second part we point out that, by
    \Cref{thm: multiplicity one cones are positive}, we may assume the point $z = (z_1'', \dots, z_s'', z_{s+1}', \dots, z_n')$ to be a smooth point in $V(\initial_{{u}} \left( \initial_{ w}( I )) \right) \cap \widetilde{\mathcal{R}}^n_{>0}$ and $V(\initial_{{u}} \left( \initial_{ w}( J ))\right) \cap \widetilde{\mathcal{R}}^n_{>0}$.
    Applying \Cref{Lemma: Hensel lifting} then finishes the proof. 
    Note that we use here that $\sigma_1$ and $\sigma_2$ have multiplicity one. Since the multiplicities of $\sigma_1$ and $\sigma_2$ might change when replacing $X$ with $\widetilde{X}$ and $Y$ with $\widetilde{Y}$, we need to give a final argument to show that $V(\initial_{\widetilde{u}}( \initial_{\widetilde{w}}( \widetilde{I} ) ) ) \cap \widetilde{\mathcal{R}}^n_{>0}$ and $V(\initial_{\widetilde{u}} ( \initial_{ \widetilde{w}}( \widetilde{J} ))) \cap \widetilde{\mathcal{R}}^n_{>0}$ also contain a smooth point, where $\widetilde{u} = (A^\top)^{-1}(u)$.
    By \Cref{lemma: induced map of initial degs}, we have $\initial_{\widetilde w+ \epsilon\widetilde{u}} (\widetilde{X}) = \varphi_A^{-1}(\initial_{w+\epsilon u} (X))$ and $\initial_{\widetilde w+\epsilon\widetilde{ u}} (\widetilde{Y}) = \varphi_A^{-1}(\initial_{w+\epsilon u} (Y))$. Since $\varphi_A$ defines a smooth map on the algebraic torus, the preimage of the open, dense smooth subsets of $\initial_{w+\epsilon u} (X)$ and $\initial_{w+\epsilon u} (Y)$ are again smooth and contain a positive point. Here we again identify $\initial_{\widetilde w+\epsilon \widetilde u} (\widetilde X)$ with $\initial_{\widetilde u} ( \initial_{\widetilde w} (\widetilde X) )$ and $\initial_{\widetilde w+\epsilon \widetilde u} \widetilde (Y)$ with $\initial_{\widetilde u} (\initial_{\widetilde w} (\widetilde Y))$. This finishes the proof of the second part.
\end{proof}

\section{Positive solutions via tropical geometry}
\label{Sec:PosSolsTrop}

Building on the results from the previous sections, we describe a method for studying the positive solutions of a polynomial equation system over a real closed field $\RR$ with non-trivial, non-Archimedean valuation. To this end, consider polynomials
\[f_i = \sum_{j = 1}^r c_{i,j}x^{a_j} \in \RR[x_1^\pm,\dots,x_n^\pm], \qquad i = 1, \dots , m. \]
We call $F = (f_1, \dots, f_m)$ a \emph{polynomial system} in $\RR[x_1^\pm, \dots, x_n^\pm]$, and  $C = \big( c_{i,j}\big) \in \mathcal{R}^{m \times r}$ the \emph{coefficient matrix} of $F$. The matrix $A \in \mathbb{Z}^{n \times r}$, whose columns are given by the exponent vectors $a_1, \dots , a_r$, is called the \emph{exponent matrix} of $F$. For convenience, we write the equations $f_1(x) = \dots =  f_m(x) =0$ in matrix form as
\begin{align}
\label{Eq::GenericSystemPuiss}
    C x^A = 0.
\end{align}
In Lemma~\ref{lemma: monomial map is bijective on positive part}, we relate the solutions of~\eqref{Eq::GenericSystemPuiss} to points in $\ker(C)$ intersected with the affine toric variety $Y_A$.

\begin{lemma}
\label{lemma: monomial map is bijective on positive part}
\label{Cor::AtrBijective}
Consider a polynomial system $F=(f_1, \dots , f_m)$ with coefficient matrix $C \in \mathcal{R}^{m\times r}$ and exponent matrix $A \in \mathbb{Z}^{n\times r}$. If $A$ has full rank $n \leq r$, then
\begin{itemize}
\item[(i)] the restriction of the monomial map $\varphi_A$ from~\eqref{Eq:MonoMap} bijectively identifies the positive vanishing locus $V(f_1, \dots, f_m)\cap \RR^n_{>0}$ with $\ker(C) \cap Y_A \cap \RR^n_{>0}$.
\item[(ii)] the linear map given by $A^\top$ induces a bijection between $\Trop^+(V(f_1, \dots, f_m))$ and $\Trop^+(\ker(C)\cap Y_A)$. 
\end{itemize}
\end{lemma}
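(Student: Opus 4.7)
The plan is to reduce both statements to two ingredients: the elementary identity $C\cdot\varphi_A(x) = F(x)$ for $x \in (\CC^*)^n$, and the fact from \Cref{prop: positive part of toric vars} that every positive point of $Y_A$ is the image of some $x \in \RR^n_{>0}$ under $\varphi_A$.

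For part (i), I would first observe that $\varphi_A$ sends $\RR^n_{>0}$ into $Y_A \cap \RR^r_{>0}$ by construction, and that the $i$-th coordinate of $C\varphi_A(x)$ equals $f_i(x)$. Hence $\varphi_A$ restricts to a map $V(F) \cap \RR^n_{>0} \to \ker(C) \cap Y_A \cap \RR^r_{>0}$. Surjectivity of this restriction is immediate from \Cref{prop: positive part of toric vars}: any $y$ in the target can be written as $\varphi_A(x)$ for some $x \in \RR^n_{>0}$, and then $F(x) = Cy = 0$. For injectivity, I would use that $\varphi_A$ is a homomorphism of algebraic tori, so that $\varphi_A(x) = \varphi_A(x')$ with $x,x' \in \RR^n_{>0}$ is equivalent to $u := x/x' \in \RR^n_{>0}$ lying in $\ker \varphi_A$. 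Since $A$ has full rank $n$, the columns of $A$ generate a finite-index sublattice of $\mathbb{Z}^n$, so for each $i$ there is some $k_i \geq 1$ with $u_i^{k_i} = 1$; in the real closed field $\RR$, the only positive root of unity is $1$, forcing $u = 1$.

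For part (ii), I would combine part (i) with the identity $\nu(\varphi_A(x)) = A^\top \nu(x)$ to obtain the equality
\[
A^\top\, \nu\bigl(V(F) \cap \RR^n_{>0}\bigr) \;=\; \nu\bigl(\ker(C)\cap Y_A\cap \RR^r_{>0}\bigr)
\]
of subsets of $\mathbb{R}^r$. Passing to Euclidean closures would yield $A^\top \Trop^+(V(F)) = \Trop^+(\ker(C)\cap Y_A)$, provided closure commutes with $A^\top$. Since $A$ has rank $n$, the linear map $A^\top\colon \mathbb{R}^n \to \mathbb{R}^r$ is injective, hence a closed embedding onto its image $\rowspan(A)$, and the latter already contains $\Trop^+(\ker(C)\cap Y_A)$ by \Cref{Cor::PosTropMono}. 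Thus closures commute with $A^\top$, the displayed equality extends to closures, and the injectivity of $A^\top$ upgrades the resulting surjection to the desired bijection.

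The one genuinely non-trivial step is the injectivity of $\varphi_A$ on the positive part in (i); this is where the rank hypothesis on $A$ enters essentially, via the roots-of-unity argument in a real closed field. Everything else is bookkeeping: the identity $C\varphi_A = F$, the surjection from \Cref{prop: positive part of toric vars}, the functoriality of the valuation along monomial maps, and the elementary fact that an injective linear map between finite-dimensional real vector spaces is a closed topological embedding.
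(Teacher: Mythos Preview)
Your argument is correct and tracks the paper's proof closely: both use the identity $C\varphi_A = F$ together with \Cref{prop: positive part of toric vars} for well-definedness and surjectivity in (i), and both deduce (ii) from (i) via $\nu\circ\varphi_A = A^\top\circ\nu$ (the paper packages this as an appeal to \Cref{prop: functoriality of monomial maps}, whereas you spell out the closure argument directly). The only notable difference is your injectivity argument in (i): rather than your kernel/roots-of-unity reasoning, the paper constructs an explicit inverse $z \mapsto z^B$ using a rational right inverse $B \in \mathbb{Q}^{r\times n}$ of $A$ (with $AB = \Id_n$), noting that $z^B$ is well-defined on $\RR^r_{>0}$ because $\RR$ is real closed, and then checking $(x^A)^B = x^{AB} = x$. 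Both arguments hinge on the same rank hypothesis; the paper's has the minor advantage of exhibiting the inverse map concretely, while yours avoids introducing fractional exponents.
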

\begin{proof}
   By \Cref{prop: positive part of toric vars}, we have $\varphi_A(\RR_{>0}^n) = Y_A \cap \RR^r_{>0}$. By~\eqref{Eq::GenericSystemPuiss}, the map 
    \[V(f_1, \dots, f_m)\cap \RR^n_{>0}  \longrightarrow  \ker(C) \cap Y_A \cap \RR^r_{>0}, \quad x \mapsto x^A \]
    is well defined. Since $A$ has full rank, it has a right inverse $B \in \mathbb{Q}^{r \times n}$, that is $AB = \Id_{n\times n}$. Since $\mathcal{R}$ is a real closed field, $z^B$ is uniquely defined for each $z \in \mathcal{R}^r_{>0}$. Thus, the map 
    \[\ker(C) \cap Y_A \cap \RR^r_{>0} \to  V(f_1, \dots, f_m)\cap \RR^n_{>0}, \quad z \mapsto z^B \]
     is well defined. A direct computation shows that $(x^A)^B = x^{AB}=x$, which completes the proof of (i). Part (ii) follows from (i) and \Cref{prop: functoriality of monomial maps}.
\end{proof}

\begin{remark}
The idea of relating solutions of polynomial equations to the intersection of a linear space with a toric variety is commonly used to study sparse equation systems, cf. \cite[Section 3.1.1]{sottile2011real} or \cite[Section 3.4]{telen2022introductiontoricgeometry}. While the restriction of the monomial map $\varphi_A$ from Lemma \ref{Cor::AtrBijective} is bijective, this might not hold for its complex counterpart. For example, for $A = \begin{pmatrix}
    2 & 0
\end{pmatrix} \in \mathbb{Z}^{1 \times 2}$ and $C = \begin{pmatrix}
    1 & -1
\end{pmatrix}  \in \mathcal{R}^{1 \times 2}$, the map 
\begin{align*}
\big\{ x \in \mathcal{C}^* \mid  x^2 -1  = 0 \big\} \to \ker(C) \cap Y_A, \quad  x \mapsto (x^2,1)
\end{align*}
is not injective, since $\varphi_A(1) = \varphi_A(-1) = (1,1)$.
\end{remark}

To compute $\Trop^+(\ker(C) \cap Y_A)$, we need to make the following assumption.

  \begin{definition}
 A polynomial system with exponent matrix $A \in \mathbb{Z}^{n \times r}$
and coefficient matrix $C \in \mathbb{\RR}^{m \times r}$ is called \emph{tropically transverse} if $\Trop_\mathcal{C}^+(\ker(C) )$ and  $\Trop_\mathcal{C}^+(Y_A)$ intersect transversally at every point $w \in \Trop_\mathcal{C}^+(\ker(C) ) \cap \Trop_\mathcal{C}^+(Y_A ) $.
\end{definition}

The following is the main result of this section.
\begin{theorem}
\label{thm: tropical equation solving}
Consider a polynomial system $F=(f_1, \dots , f_m)$ with coefficient matrix $C \in \mathcal{R}^{m\times r}$ and exponent matrix $A \in \mathbb{Z}^{n\times r}$ such that both $A$ and $C$ have rank $n$. If $F$ is tropically transverse, then the linear map $A^\top$ induces a bijection
    \[
    \Trop^+(V(f_1, \dots, f_m)) \longrightarrow \Trop^+(\ker(C)) \cap \operatorname{rowspan}(A).
    \]
\end{theorem}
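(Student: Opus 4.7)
The plan is to combine \Cref{Cor::AtrBijective}(ii) with the real Transverse Intersection Theorem \Cref{thm: weakly positive transverse intersection}. Since \Cref{Cor::AtrBijective}(ii) already identifies $A^\top$ as a bijection $\Trop^+(V(f_1,\dots,f_m)) \to \Trop^+(\ker(C) \cap Y_A)$, it suffices to prove the tropical factorization
\[
\Trop^+(\ker(C) \cap Y_A) \;=\; \Trop^+(\ker(C)) \cap \operatorname{rowspan}(A).
\]
By \Cref{Cor::PosTropMono} the row span of $A$ equals $\Trop^+(Y_A)$, so the right-hand side reads $\Trop^+(\ker(C)) \cap \Trop^+(Y_A)$, and the inclusion $\subseteq$ is immediate from monotonicity of positive tropicalization.

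For the reverse inclusion, I would take $w \in \Trop^+(\ker(C)) \cap \operatorname{rowspan}(A)$ and first pass to the weakly positive setting. Since $\ker(C)$ is a real linear space, \Cref{Prop:PositiveVSWeakPositiveTrop} yields $\Trop^+(\ker(C)) = \Tropw(\ker(C))$; and since $\operatorname{rowspan}(A) = \Trop^+(Y_A) \subseteq \Tropw(Y_A)$, I obtain $w \in \Tropw(\ker(C)) \cap \Tropw(Y_A)$. The tropical transversality hypothesis then lets me apply the first part of \Cref{thm: weakly positive transverse intersection} at $w$, giving $w \in \Tropw(\ker(C) \cap Y_A)$.

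The main obstacle is the final upgrade from weakly positive to positive tropicalization, which I would obtain from the second part of \Cref{thm: weakly positive transverse intersection}. Its hypothesis demands that $w$ be a simple point of both $\Trop(\ker(C))$ and $\Trop(Y_A)$. Every top-dimensional cell of the Bergman fan of a linear space carries multiplicity one, handling $\ker(C)$. For the toric variety $Y_A$, a suitable monomial translation identifies the initial ideal $\operatorname{in}_w(I_A)$ with the prime toric ideal $I_A$ itself, so the unique top-dimensional cone $\operatorname{rowspan}(A)$ of $\Trop(Y_A)$ again has multiplicity one. Finally, the transversality assumption forces the cells of $\Tropw(\ker(C))$ and $\Tropw(Y_A)$ through $w$ to have complementary dimensions $r-n$ and $n$, hence to be top-dimensional within each complex tropicalization, so $w$ sits in the relative interior of a multiplicity-one maximal cone of both $\Trop(\ker(C))$ and $\Trop(Y_A)$. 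Applying the second part of \Cref{thm: weakly positive transverse intersection} then delivers $w \in \Trop^+(\ker(C) \cap Y_A)$, completing the proof.
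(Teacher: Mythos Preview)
Your argument is correct and follows essentially the same route as the paper: reduce via \Cref{Cor::AtrBijective}(ii) and \Cref{Cor::PosTropMono} to the equality $\Trop^+(\ker(C)\cap Y_A)=\Trop^+(\ker(C))\cap\Trop^+(Y_A)$, then verify that transversality together with the rank hypotheses forces $w$ to lie in maximal cells, observe that linear spaces and toric varieties have multiplicity-one maximal cells, and invoke the second part of \Cref{thm: weakly positive transverse intersection}. One small remark: your appeal to \Cref{Prop:PositiveVSWeakPositiveTrop} is both unnecessary and, as stated in the paper, only formulated over $\mathbb{R}\{\!\{t\}\!\}$ rather than a general $\mathcal{R}$; all you need there is the trivial inclusion $\Trop^+\subseteq\Tropw$, so you can simply drop that citation.
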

\begin{proof}
From \Cref{lemma: monomial map is bijective on positive part}, it follows that $A^\top$ induces a bijection between $\Trop^+(V(f_1, \dots, f_m))$ and $\Trop^+(\ker(C) \cap Y_A)$. By Corollary \ref{Cor::PosTropMono}, we have $\Trop^+\big( 
 Y_A\big) = \rowspan(A)$. Thus, it is enough to show that
\begin{align}
\label{Eq:ProofTropTransverse}
\Trop^+(\ker(C) \cap Y_A) =  \Trop^+(\ker(C)) \cap \Trop^+(Y_A). 
\end{align}
The inclusion $\subseteq$ always holds. To show equality, we fix a point $w\in \Trop^+\big( \ker(C) \big) \cap \Trop^+\big( Y_A\big)$. By the transversality assumption, there exist polyhedral complexes $\Sigma_1, \, \Sigma_2$ supported on $\Trop_\mathcal{C}^+(\ker(C))$ and $\Trop_\mathcal{C}^+(Y_A)$ respectively, such that the affine hull of the unique cells $\sigma_1 \in \Sigma_1, \, \sigma_2 \in \Sigma_2$, which contain $w$ in their relative interiors, is $\mathbb{R}^r$. Since $\rk(A) = n$ and $\dim(\ker(C)) = r -n$, it follows that $\sigma_1$ and $\sigma_2$ are maximal cells in $\Sigma_1$ and $\Sigma_2$. Furthermore, since $\ker(C)$ is a linear subspace and $Y_A$ is a toric variety, both cells $\sigma_1$ and $\sigma_2$ have multiplicity $1$. Thus, $w$ is a simple point of both $\Trop(\ker(C))$ and $\Trop(Y_A)$.  Now, Theorem~\ref{thm: weakly positive transverse intersection} implies that $w \in \Trop^+(\ker(C) \cap Y_A)$. This shows~\eqref{Eq:ProofTropTransverse} and completes the proof.
\end{proof}

We conclude this section with a corollary of Theorem~\ref{thm: tropical equation solving}, which allows us to provide lower bounds on the positive real solutions of a specific type of equation system.

\begin{corollary}
\label{cor: generalized Viro patchworking}
     Let $\RR = H(\R_{\text{an}^*})$ be the Hardy field and let $F = (f_1, \dots, f_m)$ be a polynomial system with coefficient matrix $C \in \mathcal{R}^{m\times r}$ and exponent matrix $A \in \mathbb{Z}^{n\times r}$ such that both $A$ and $C$ have rank $n$.

    There exists $\varepsilon \in \mathbb{R}_{>0}$ such that for all $\tau \in (0,\varepsilon)$ the number of positive real solutions of the induced system
    $f_{1,\tau}(x) = \dots = f_{n,\tau}(x) = 0$ is bounded from below by the cardinality of $\Trop^+\big( \ker(C)  \cap Y_A\big).$
    Furthermore, if $F$ is tropically transverse, this lower bound equals the cardinality of
    $\Trop^+(\ker(C))  \cap \operatorname{rowspan}(A).$
\end{corollary}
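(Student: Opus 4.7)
The plan is to combine the bijective correspondence from \Cref{lemma: monomial map is bijective on positive part} and \Cref{thm: tropical equation solving} with the o-minimal transfer result \Cref{Thm::NumOfSols} in order to pass between the tropical picture, positive solutions over the Hardy field $\RR = H(\R_{\text{an}^*})$, and positive real solutions of the induced system for small~$\tau$.

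First, I would recall that $\Trop^+(V(f_1, \dots, f_m))$ is by definition the Euclidean closure of $\nu\big(V(f_1, \dots, f_m) \cap \RR^n_{>0}\big)$ in $\R^n$. When this closure is finite of cardinality $N$, it coincides with $\nu\big(V(f_1, \dots, f_m) \cap \RR^n_{>0}\big)$ itself, since finite sets in $\R^n$ are closed; consequently, $V(f_1, \dots, f_m)$ admits at least $N$ positive solutions over $\RR$, one in each fiber of $\nu$. By \Cref{lemma: monomial map is bijective on positive part}(ii) the linear map induced by $A^\top$ is a bijection between $\Trop^+(V(f_1, \dots, f_m))$ and $\Trop^+(\ker(C) \cap Y_A)$, so the same lower bound can be stated as $|\Trop^+(\ker(C)\cap Y_A)|$.

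Next, I would transfer this lower bound from $\RR$ to $\R$ using the argument behind \Cref{Thm::NumOfSols}. The statement ``there exist at least $N$ distinct positive solutions of $f_1 = \dots = f_m = 0$'' is a first-order formula in a definable family, so \cite[Proposition 5.9]{Coste::oMinimal} applies in the same way as for the exact count in \Cref{Thm::NumOfSols}. This yields an $\varepsilon \in \R_{>0}$ such that, for every $\tau \in (0, \varepsilon)$, the induced real system inherits at least $N = |\Trop^+(\ker(C) \cap Y_A)|$ positive real solutions, which is the first claim. For the second claim, the tropical transversality hypothesis allows me to invoke \Cref{thm: tropical equation solving}: the map $A^\top$ gives a bijection $\Trop^+(V(f_1, \dots, f_m)) \to \Trop^+(\ker(C)) \cap \operatorname{rowspan}(A)$, so chaining this with the bijection from Step~1 identifies the lower bound with $|\Trop^+(\ker(C)) \cap \operatorname{rowspan}(A)|$.

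The principal subtlety I anticipate is the possibility that $\Trop^+(\ker(C)\cap Y_A)$ is infinite when no transversality is assumed, in which case the statement ``at least $N$ positive real solutions'' must hold for every finite $N$ bounded above by this cardinality. This is not a genuine obstacle, since the transfer argument applies uniformly to each such $N$, and under the additional tropical transversality hypothesis the dimension count $\dim \ker(C) + \dim Y_A = r$ forces the intersection to be zero-dimensional, so the bound is a genuine finite integer.
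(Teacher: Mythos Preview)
Your proposal is correct and follows the same route as the paper, which simply cites \Cref{Thm::NumOfSols}, \Cref{lemma: monomial map is bijective on positive part}, and \Cref{thm: tropical equation solving}. You supply the details the paper leaves implicit---in particular, the observation that a finite positive tropicalization forces at least that many positive $\RR$-points, and the need to invoke the ``at least $N$'' version of the o-minimal transfer (via \cite[Proposition 5.9]{Coste::oMinimal}) rather than the exact count in \Cref{Thm::NumOfSols}---which is a careful and welcome elaboration.
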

 \begin{proof}
     This follows from \Cref{Thm::NumOfSols}, \Cref{lemma: monomial map is bijective on positive part} and \Cref{thm: tropical equation solving}.
 \end{proof}

\begin{example}
    Consider the univariate polynomial  
    \[  f=  x^2 -(2+t)x+1+t = \big(x-1\big)\big(x-(1+t)\big). \]
    Its coefficient and exponent matrices are given by
      \begin{align*}
          C = \begin{pmatrix}
        1 & -(2+t) & 1+t
    \end{pmatrix}, \qquad  A = \begin{pmatrix}
        2 & 1 & 0
    \end{pmatrix}.
      \end{align*}
   
      There are two solutions of $f(x) = 0$ in $\mathcal{R}_{>0}$, namely $1$ and $1+t$. Consequently, by Theorem~\ref{Thm::NumOfSols}, there exists $\varepsilon \in \mathbb{R}_{>0}$ such that for all $\tau \in (0,\varepsilon)$ the real polynomial
          \[  f_\tau=  x^2 -(2+\tau)x+1+\tau = \big(x-1\big)\big(x-(1+\tau)\big). \]
      has two positive real solutions. Since $\nu(1) = \nu(1+t) = 0$, $\Trop^+(V(f))$ and consequently  $\Trop^+(\ker(C) \cap Y_A)$ contain exactly one point. This example demonstrates that in some cases the bound from \Cref{cor: generalized Viro patchworking} is not an equality.
\end{example}

\section{Applications of the tropical bound}
\label{Section::SparsePolys}

\subsection{Vertically parametrized polynomial systems}

In this section, we provide a lower bound on the number of positive real solutions, for sufficiently small values of $t > 0$, for an equation system of the form 
\begin{align}
\label{Eq::BihanViroSystem}
 N \diag(t^h) x^A = 0,
\end{align}
 where $A  \in \Z^{n \times r},\, N \in \mathbb{R}^{n \times r}$ are matrices of full rank $n$, and $h \in \mathbb{R}^r$. The coefficient matrix of the polynomial system in~\eqref{Eq::BihanViroSystem} equals $C = N \diag(t^h) $, that is, the $j$-th column of $N$ is scaled by $t^{h_j}$. These types of parametrized equation system are called \emph{vertically parametrized} in the literature \cite{RenHelminck,FeliuDim,HelminckHenrikssonRen}. They appear as the steady states equations of chemical reaction networks \cite{DickensteinInvitation,FeliuRoleofAlg}.
While in the previous sections $t$ always denoted a formal variable in the appropriate Puiseux series fields, in this section we consider $t$ mostly as a small positive real number.

\begin{theorem}
\label{Prop::LowerBoundVertic}
      For generic $h \in \mathbb{R}^r$, the polynomial system in \eqref{Eq::BihanViroSystem} is tropically transverse. If \eqref{Eq::BihanViroSystem} is tropically transverse, there exists $\varepsilon \in \mathbb{R}_{>0}$ such that for all $t \in (0,\varepsilon)$ the number of positive real solutions of 
  \[ N \diag(t^h) x^A = 0\]
  is at least the number of points in $\big(\Trop^+(\ker(N)) - h \big) \cap \rowspan(A)$.
\end{theorem}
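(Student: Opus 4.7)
The plan is to reduce the theorem to Corollary~\ref{cor: generalized Viro patchworking} by identifying the coefficient matrix as $C = N\diag(t^h)$ and verifying tropical transversality for generic $h$. The first step is a direct computation: since multiplying by $\diag(t^h)$ on the right amounts to rescaling each column, one has $\ker(C) = \diag(t^{-h})\ker(N)$, so coordinate-wise valuation yields
\[
  \Trop^+(\ker(C)) \;=\; \Trop^+(\ker(N)) - h.
\]
Combined with $\Trop^+(Y_A) = \rowspan(A)$ from Corollary~\ref{Cor::PosTropMono}, this already identifies the intersection appearing in the statement. By Proposition~\ref{Prop:PositiveVSWeakPositiveTrop} the positive and weakly positive tropicalizations of $\ker(C)$ agree (since $\ker(C)$ is a linear space), and the analogous equality for $Y_A$ follows from \Cref{prop: functoriality of monomial maps} applied to $X = (\CC^*)^n$, $Y = Y_A \cap (\CC^*)^r$. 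Hence it suffices to verify transversality with $\Trop^+$.

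The heart of the argument is a genericity computation. By Section~\ref{Sec::LinearSpaces}, $\Trop^+(\ker(N))$ carries the structure of a polyhedral fan $\Sigma$ whose maximal cones have dimension $r-n$, and $\rowspan(A)$ is a linear subspace of dimension $n$. For a cone $\sigma \in \Sigma$, the translated cone $\sigma - h$ meets $\rowspan(A)$ if and only if $h \in \sigma + \rowspan(A)$. I would then distinguish two bad cases: (i) $\dim\sigma < r-n$, in which case $\dim(\sigma + \rowspan(A)) < r$; (ii) $\sigma$ maximal with $\operatorname{span}(\sigma) + \rowspan(A) \subsetneq \R^r$. In both cases $\sigma + \rowspan(A)$ is a proper linear subspace. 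Since $\Sigma$ has only finitely many cones, the union of these proper subspaces is closed of measure zero, so for $h$ in the complement — a dense open subset of $\R^r$ — every point of $(\Trop^+(\ker(N)) - h) \cap \rowspan(A)$ lies in the relative interior of a maximal cone $\sigma - h$ satisfying $\operatorname{span}(\sigma) + \rowspan(A) = \R^r$. Since the affine hull of $-h+\sigma$ is the translate $-h + \operatorname{span}(\sigma)$, this is exactly the transversality condition in the definition of tropical transversality. In particular, the intersection is finite.

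Having established tropical transversality for generic $h$, the conclusion is immediate from Corollary~\ref{cor: generalized Viro patchworking}: there exists $\varepsilon > 0$ such that for all $t \in (0, \varepsilon)$ the number of positive real solutions of $N\diag(t^h)x^A = 0$ is at least $|\Trop^+(\ker(C)) \cap \rowspan(A)| = |(\Trop^+(\ker(N)) - h) \cap \rowspan(A)|$.

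The main obstacle is the bookkeeping in the genericity argument — making sure the relevant "bad locus" is genuinely a finite union of proper linear subspaces. This rests on the finiteness of $\Sigma$, on the dimension counts ($\dim \ker(N) = r-n$, $\dim \rowspan(A) = n$), and on the observation that transversality is characterized purely by the sum of linear spans, which is independent of the translation $h$ and hence behaves well under perturbation.
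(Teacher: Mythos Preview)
Your proposal is correct and follows essentially the same route as the paper: identify $C = N\diag(t^h)$, compute $\Trop^+(\ker(C)) = \Trop^+(\ker(N)) - h$, establish tropical transversality for generic $h$, and invoke Corollary~\ref{cor: generalized Viro patchworking}. The paper simply cites the genericity step as ``a similar argument as in \cite[Proposition 3.6.12]{maclagan2015introduction}'', whereas you spell it out explicitly; your added remark that $\Trop^+ = \Tropw$ for both $\ker(C)$ and $Y_A$ is a helpful clarification the paper leaves implicit. One small wording slip: in cases (i) and (ii) the set $\sigma + \rowspan(A)$ is a cone, not a linear subspace---what you mean (and what makes the argument work) is that it is \emph{contained in} the proper linear subspace $\operatorname{span}(\sigma) + \rowspan(A)$.
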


\begin{proof}First, we view $t^{h_j}, \, j = 1, \dots, r$ as generalized real Puiseux series, and consider the coefficient matrix $N \diag(t^h)$ as a matrix with entries in the Hardy field $H(\mathbb{R}_{\text{an}^*})$. A direct computation shows that 
    \[\Trop^+\big(\ker(N\diag(t^h)) \big)  = \Trop^+(\ker(N)) - h.\]
    By a similar argument as in \cite[Proposition 3.6.12]{maclagan2015introduction}, it follows that for a generic choice of $h$, the intersection $\big(\Trop_\mathcal{C}^+(\ker(N)) - h \big) \cap \rowspan(A)$ is transverse. Now, the result follows directly from Corollary~\ref{cor: generalized Viro patchworking}.
\end{proof}

\begin{example}
\label{Ex::VertParamBound}
Consider the following vertically parametrized polynomial system
\begin{equation}
\label{Eq::ExSystem}
    \begin{aligned}
    -3t^{h_1}  + t^{h_2} x_1^2  - t^{h_3} x_2^2 - 2t^{h_4} x_1^2 x_2^2+2t^{h_5} x_1 x_2 = 0, \\
     -\phantom{3}t^{h_1}  + t^{h_2} x_1^2  - t^{h_3} x_2^2 -  \phantom{2}t^{h_4} x_1^2 x_2^2+\phantom{2}t^{h_5}x_1 x_2 = 0,
\end{aligned}
\end{equation}
where $x_1,x_2$ are the variables and $t$ is a positive real parameter. The corresponding coefficient and exponent matrices are
  \[  N = \begin{pmatrix}
    -3 & 1 & -1 & -2 & 2 \\
    -1 & 1 & -1 & -1 & 1
\end{pmatrix}, \quad   A =  \begin{pmatrix}
         0 & 2 & 0 & 2 & 1\\
         0 & 0 & 2 & 2 & 1
    \end{pmatrix}. \]

    As in Example~\ref{Ex::BergmanFanofN}, one computes that the positive tropicalization $\Trop^+(\ker(N))$ contains the vectors 
    \begin{align}
    \label{Eq:ExVector}
    (0,2,0,2,0), \quad \text{and} \quad (0,-1,-1,-2,-2).
    \end{align}
    For $h = (0,0,0,0,-1)$, the intersection $\big(\Trop_\mathcal{C}^+(\ker(N)) - h \big) \cap \rowspan(A)$ is transverse. Thus, $N\diag(t^h)x^A=0$ is a tropically transverse system and Theorem~\ref{Prop::LowerBoundVertic} applies.

   The translation of the vectors in \eqref{Eq:ExVector} by the vector $-h = (0,0,0,0,1)$ lie in the row span of $A$. Using \texttt{Polymake}~\cite{POLYMAKE}, one can compute that these are the only points in the intersection $\big(\Trop^+(\ker(N)) - h \big) \cap \rowspan(A)$. Thus, for small enough $t > 0$ the equation system~\eqref{Eq::ExSystem} for $h = (0,0,0,0,-1)$ has at least two positive real solutions by Theorem~\ref{Prop::LowerBoundVertic}.
\end{example}

\begin{remark}
\label{Remark:ViroforVertParam}
Theorem~\ref{Prop::LowerBoundVertic} is closely related to the classical Viro's method~\cite{Viro_Dissertation,Bernd::Patchworking}. However, Viro's method for complete intersections \cite[Theorem 4]{Bernd::Patchworking} cannot be directly applied to vertically parametrized systems~\eqref{Eq::BihanViroSystem}. In a vertically parametrized system, a monomial~$x^{a_j}$ must be multiplied by the same $t^{h_j}$ in each polynomial of the system, which can result in the height function not being sufficiently generic in the sense of \cite{Bernd::Patchworking}. For example, the height functions for the two polynomials in~\eqref{Eq::ExSystem} are identical.
\end{remark}

\subsection{Positively decorated simplices}

In the following, we compare Theorem~\ref{Prop::LowerBoundVertic} to a method based on \emph{positively decorated simplices} from \cite{PolyhedralMethodSparsePositive}, which also provides a lower bound on the number of positive real solutions of~\eqref{Eq::BihanViroSystem} for sufficiently small values of $t>0$. Throughout this section, we assume that the columns of the exponent matrix $A$ are pairwise distinct. We denote these columns by $\alpha_1, \dots, \alpha_r$. Each $h \in \mathbb{R}^r$ induces a regular subdivision $\Gamma_h$ of $\alpha_1, \dots, \alpha_r$, whose \emph{cells} $\Delta \in  \Gamma_h$ are given by
\begin{align}
\label{Eq::DefSimplexMin}
     \Delta = \big\{ \alpha_k \mid \exists v \in \mathbb{R}^n\colon (v,1) \cdot (\alpha_k,h_k) = \min_{j \in [r]} (v,1) \cdot (\alpha_j,h_j) \big\}. 
\end{align}
If the cardinality of each $\Delta \in \Gamma_{h}$ is exactly $\dim \Conv(\Delta ) +1$, then $\Gamma_h$  is a triangulation. Geometrically, we can think about the elements of a cell $\Delta \in \Gamma_h$ as follows. Consider the convex hull $\hat{P}$ of the \emph{lifted points} $\hat{\mathcal{A}} = \{ \hat{\alpha}_1, \dots ,\hat{\alpha}_r\}= \{(\alpha_1,h_1), \dots ,(\alpha_r,h_r) \}$. A face of $\hat{P}$ is called a \emph{lower face} if the last coordinate of one of its inner normal vectors is positive. Thus, $\{\alpha_{i_1}, \dots , \alpha_{i_k}\}$ is a cell in $\Gamma_h$ if and only if there exists a lower face $\hat{F}$ of $\hat{P}$ with inner normal vector $(v,1) \in \mathbb{R}^{n+1}$ such that $\hat{F} \cap \hat{\mathcal{A}} = \{\hat{\alpha}_{i_1}, \dots , \hat{\alpha}_{i_k} \}$.

\begin{example}
\label{Ex::SubDivision}
    Consider the exponent matrix $A$ from Example~\ref{Ex::VertParamBound}. For $h = (0,0,0,0,-1)$, the regular subdivision $\Gamma_h$ has four  $2$-dimensional cells
    \begin{align*}
\Delta_1 = \Big\{ \begin{pmatrix}
        0\\
        0
    \end{pmatrix} ,\begin{pmatrix}
        2\\
        0
    \end{pmatrix}, \begin{pmatrix}
        1\\
        1
    \end{pmatrix}\Big\}, \quad \Delta_2 = \Big\{ \begin{pmatrix}
        0\\
        0
    \end{pmatrix} ,\begin{pmatrix}
        0\\
        2
    \end{pmatrix}, \begin{pmatrix}
        1\\
        1
    \end{pmatrix}\Big\}, \\
    \Delta_3 = \Big\{ \begin{pmatrix}
        2\\
        0
    \end{pmatrix} ,\begin{pmatrix}
        2\\
        2
    \end{pmatrix}, \begin{pmatrix}
        1\\
        1
    \end{pmatrix}\Big\}, \quad \Delta_4 = \Big\{ \begin{pmatrix}
        0\\
        2
    \end{pmatrix} ,\begin{pmatrix}
        2\\
        2
    \end{pmatrix}, \begin{pmatrix}
        1\\
        1
    \end{pmatrix}\Big\},
    \end{align*}
    which are depicted in Figure~\ref{FIG2}(a). In this example, the lifted points are
    \[ \hat{\alpha}_1 = \begin{pmatrix}0\\0\\0\end{pmatrix}, \quad \hat{\alpha}_2 = \begin{pmatrix}2\\0\\0\end{pmatrix}, \quad \hat{\alpha}_3 = \begin{pmatrix}0\\2\\0\end{pmatrix}, \quad \hat{\alpha}_4 = \begin{pmatrix}2\\2\\0\end{pmatrix}, \quad \hat{\alpha}_5 = \begin{pmatrix}1\\1\\-1\end{pmatrix}.\]
    For the cell $\Delta_2$, an inner normal vector of the corresponding lower face of $\hat{P}$ is $(v,1) = (1,0,1)$. For an illustration, we refer to Figure~\ref{FIG2}(b).
\end{example}

\begin{figure}[t]
\centering
\begin{minipage}[h]{0.3\textwidth}
\centering
\includegraphics[scale=0.4]{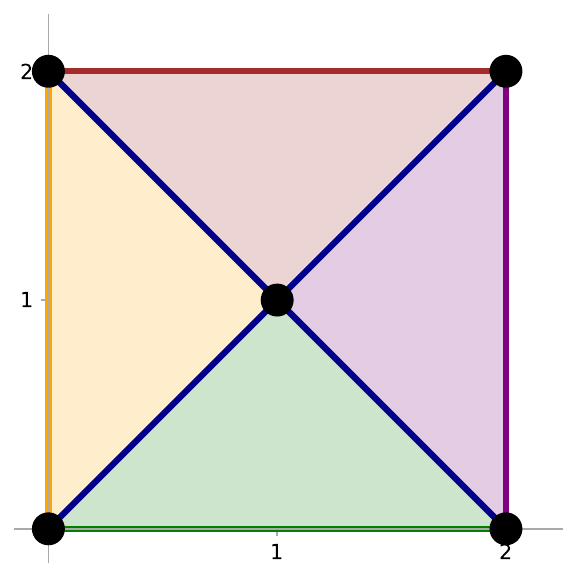}
{\small (a)}
\end{minipage}
\hspace{40pt}
\begin{minipage}[h]{0.3\textwidth}
\centering
\includegraphics[scale=0.4]{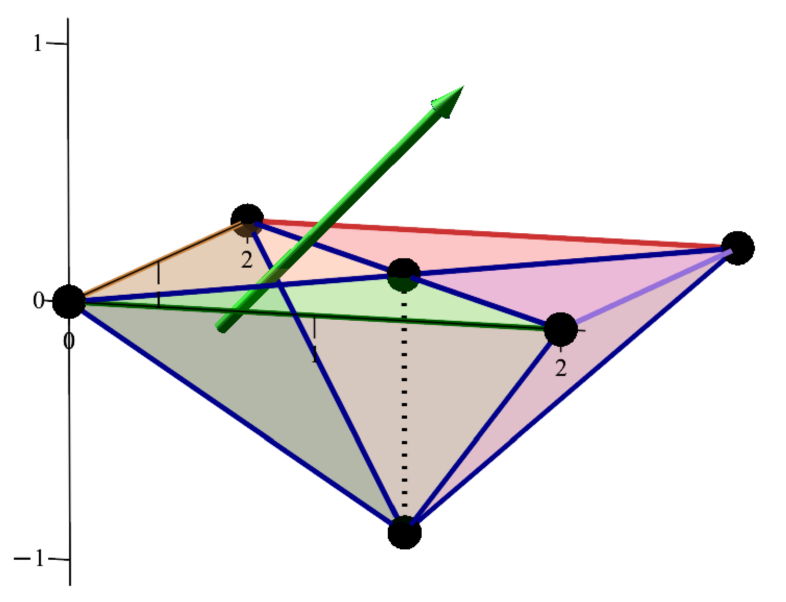}
{\small (b)}
\end{minipage}

\caption{{\small  Regular subdivision and the lifted points from Example~\ref{Ex::SubDivision} }}\label{FIG2}
\end{figure}

Now, recall the definition of a positively decorated simplex from \cite{PolyhedralMethodSparsePositive}. For an $n$-dimensional cell $\Delta = \{\alpha_{i_1}, \dots \alpha_{i_{n+1}}\} \in \Gamma_h$ with $i_1 < \dots < i_{n+1}$, we denote by $N_\Delta$ the matrix which is obtained from $N$ by selecting the columns with indices $i_1, \dots ,i_{n+1}$. We say that the matrix $N$ \emph{positively decorates the simplex} $\Delta \in \Gamma_h$ if the kernel of the submatrix $N_\Delta$ is one dimensional and intersects the positive orthant $\mathbb{R}^{n+1}_{>0}$. The importance of this notion is due to the following lemma.

\begin{lemma} \cite[Proposition 3.3]{PolyhedralMethodSparsePositive}
\label{Lemma::SubSolution}
Let  $A  \in \Z^{n \times r},\, N \in \mathbb{R}^{n \times r}$ be matrices of rank $n$ and $h \in \mathbb{R}^r$. If $N$ positively decorates the $n$-simplex $\Delta \in \Gamma_h$, then $N_\Delta x^{A_\Delta} = 0$ has exactly one positive real solution. Moreover, this unique positive solution is non-degenerate.  
\end{lemma}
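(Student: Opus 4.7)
The plan is to reduce the problem to solving a single monomial system, which becomes linear after taking logarithms, and then to verify non-degeneracy via a direct Jacobian computation.

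First, I would reformulate the problem. Since $\ker(N_\Delta)$ is one-dimensional and meets $\mathbb{R}^{n+1}_{>0}$, pick a positive spanning vector $v \in \mathbb{R}^{n+1}_{>0}$. Any positive solution $x \in \mathbb{R}^n_{>0}$ of $N_\Delta x^{A_\Delta}=0$ has $x^{A_\Delta} \in \mathbb{R}^{n+1}_{>0}$, so it must equal $\lambda v$ for some $\lambda > 0$, and conversely any such $(x,\lambda)$ gives a positive solution. The question is thus the existence and uniqueness of $(x,\lambda) \in \mathbb{R}^n_{>0}\times \mathbb{R}_{>0}$ satisfying $x^{A_\Delta} = \lambda v$.

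Second, taking componentwise logarithms turns this into the linear system
\[
 \bigl(A_\Delta^\top \ \big| \ -\mathbf{1}\bigr) \begin{pmatrix}\log(x) \\ \log(\lambda)\end{pmatrix} = \log(v),
\]
where $(A_\Delta^\top\mid -\mathbf{1})$ is the $(n+1)\times(n+1)$ matrix whose $j$-th row is $(\alpha_{i_j}^\top, -1)$. This matrix is invertible precisely because the columns $\alpha_{i_1}, \dots, \alpha_{i_{n+1}}$ of $A_\Delta$ are affinely independent, which is the defining property of the $n$-simplex $\Delta$. Hence $(\log x, \log \lambda)$, and therefore $(x,\lambda)$, is uniquely determined, proving the existence and uniqueness of the positive solution.

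Third, for non-degeneracy I would compute the Jacobian of $F(x) = N_\Delta x^{A_\Delta}$ directly. Using $\partial x^{\alpha}/\partial x_k = \alpha_k x^{\alpha}/x_k$ and substituting $x^{A_\Delta} = \lambda v$ at the solution yields
\[
 J(x) \;=\; \lambda \cdot \operatorname{diag}(1/x) \cdot N_\Delta \cdot \operatorname{diag}(v) \cdot A_\Delta^\top.
\]
The key step is to show that the $n\times n$ matrix $N_\Delta\,\operatorname{diag}(v)\,A_\Delta^\top$ is invertible. Right-multiplying $N_\Delta\,\operatorname{diag}(v)$ by the invertible augmented matrix $(A_\Delta^\top \mid \mathbf{1})$ (invertible by the same affine-independence argument), the last column becomes $N_\Delta\,\operatorname{diag}(v)\,\mathbf{1} = N_\Delta v = 0$. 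Since $N_\Delta\,\operatorname{diag}(v)$ has full row rank $n$, the product $(N_\Delta\,\operatorname{diag}(v)\,A_\Delta^\top \mid 0)$ also has rank $n$, forcing the left $n\times n$ block to be invertible. Hence $J$ is non-singular and the positive solution is non-degenerate.

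The main obstacle is the non-degeneracy step: existence and uniqueness are essentially the standard fact that a unisolvent monomial system has a unique positive solution, but the Jacobian computation requires recognizing the right factorization and simultaneously exploiting both the positivity of $v$ and the affine independence of $\Delta$ through the augmented-matrix trick above.
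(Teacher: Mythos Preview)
Your argument is correct and self-contained. Note that the paper does not actually prove this lemma; it is simply quoted from \cite[Proposition 3.3]{PolyhedralMethodSparsePositive}, so there is no in-paper proof to compare against. Your approach---reducing to a monomial system via the spanning vector $v$ of $\ker(N_\Delta)$, linearizing by logarithms, and using affine independence of the simplex vertices to get invertibility of $(A_\Delta^\top\mid -\mathbf{1})$---is exactly the standard one, and your augmented-matrix trick for the Jacobian is clean.

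One small correction: in your Jacobian factorization the factor $\operatorname{diag}(1/x)$ belongs on the right, not the left. Differentiating $x\mapsto x^{A_\Delta}$ gives the $(n{+}1)\times n$ matrix $\operatorname{diag}(x^{A_\Delta})\,A_\Delta^\top\,\operatorname{diag}(1/x)$, so at the solution
\[
J(x)\;=\;\lambda\,N_\Delta\,\operatorname{diag}(v)\,A_\Delta^\top\,\operatorname{diag}(1/x).
\]
Since $\operatorname{diag}(1/x)$ is an invertible $n\times n$ matrix either way, this does not affect your conclusion that invertibility of $J$ reduces to invertibility of $N_\Delta\,\operatorname{diag}(v)\,A_\Delta^\top$, and your rank argument for the latter is fine.
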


Using Lemma~\ref{Lemma::SubSolution}, the authors in \cite{PolyhedralMethodSparsePositive} gave the following lower bound on the positive real solutions of the system \eqref{Eq::BihanViroSystem}.

\begin{theorem}
\label{Thm:Bihan}
Let  $A  \in \Z^{n \times r},\, N \in \mathbb{R}^{n \times r}$be matrices of rank $n$ and $h \in \mathbb{R}^r$.
Then there exists $\varepsilon \in \mathbb{R}_{>0}$ such that for all $t \in (0,\varepsilon)$ the number of positive real solutions of the system \[N \diag(t^h) x^A = 0\]
   is at least the number of positively decorated simplices of $\Gamma_h$.
\end{theorem}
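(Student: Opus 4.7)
The plan is to adapt Viro's patchworking strategy to the vertically parametrized setting by producing, for each positively decorated simplex $\Delta \in \Gamma_h$, one positive real solution of the deformed system, and then showing that distinct simplices yield distinct solutions for all sufficiently small $t > 0$.

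First I would fix a positively decorated $n$-simplex $\Delta = \{\alpha_{i_1}, \dots, \alpha_{i_{n+1}}\} \in \Gamma_h$ and let $(v,1) \in \mathbb{R}^{n+1}$ be the inner normal of the associated lower face of the lifted polytope, with common value $c = v \cdot \alpha_{i_k} + h_{i_k}$ for $k = 1, \dots, n+1$ and $v \cdot \alpha_j + h_j > c$ for $j \notin \{i_1, \dots, i_{n+1}\}$. Substituting $x = t^v y$ in $N \diag(t^h)\, x^A = 0$ and dividing each equation by $t^c$ produces the rescaled family
\begin{align*}
\sum_{j=1}^r N_{ij}\, t^{h_j + v \cdot \alpha_j - c}\, y^{\alpha_j} = 0, \qquad i = 1,\dots,n.
\end{align*}
All exponents of $t$ appearing here are non-negative, and those indexed by $j \in \{i_1,\dots,i_{n+1}\}$ are zero, so the family extends continuously to $t = 0$ and specializes there to the system $N_\Delta\, y^{A_\Delta} = 0$.

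By Lemma~\ref{Lemma::SubSolution}, this limit system has a unique non-degenerate positive solution $y^\ast \in \mathbb{R}^n_{>0}$. Applying the implicit function theorem to the rescaled family, viewed as a continuous map from $[0,\infty) \times \mathbb{R}^n_{>0}$ to $\mathbb{R}^n$ whose Jacobian in $y$ at $(0,y^\ast)$ is invertible, yields a continuous family of positive real solutions $y(t)$ for all sufficiently small $t \geq 0$ with $y(0) = y^\ast$. Setting $x_\Delta(t) = t^v y(t)$ then gives a positive real solution of the original vertically parametrized system for small $t > 0$.

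The main obstacle is verifying that the solutions $x_{\Delta_1}(t)$ and $x_{\Delta_2}(t)$ arising from distinct positively decorated simplices $\Delta_1 \neq \Delta_2$ are distinct for small $t > 0$. Because each $\Delta_i$ is an $n$-dimensional cell of $\Gamma_h$, the associated lower faces of the lifted polytope are distinct facets and therefore have distinct inner normals $(v_i, 1)$; in particular $v_1 \neq v_2$. Since both $y_i(t)$ remain bounded and bounded away from zero as $t \to 0^+$, the coordinatewise logarithmic order of $x_{\Delta_i}(t) = t^{v_i} y_i(t)$ in $t$ is prescribed by the vector $v_i$, so $v_1 \neq v_2$ forces $x_{\Delta_1}(t) \neq x_{\Delta_2}(t)$ for sufficiently small $t$. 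A secondary technicality is that the rescaled family has real (not integer) exponents in $t$, so one must invoke a version of the implicit function theorem valid for maps that are only continuous in the parameter; this is standard given the invertibility of the Jacobian in $y$.
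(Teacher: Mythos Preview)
Your argument is correct and is essentially the standard proof from \cite{PolyhedralMethodSparsePositive}, which the paper simply cites for this theorem rather than reproving. The same rescaling-and-IFT strategy reappears in the paper's own proof of Proposition~\ref{Prop:InjMapPosDec}, with one cosmetic difference: there the authors first multiply $h$ and $v$ by an integer $\lambda$ (assuming $h\in\mathbb{Q}^r$) so that all exponents of $t$ become integers and the rescaled family is genuinely polynomial in $t$, after which they can invoke the real-analytic implicit function theorem. Your version, allowing arbitrary $h\in\mathbb{R}^r$, trades this for the continuous-parameter IFT you mention; both routes are valid, and yours is slightly more general since it does not require $h$ to be rational. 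The only point you leave implicit is that a single $\varepsilon$ works simultaneously for all positively decorated simplices, but this is immediate since there are finitely many of them.
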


\begin{example}
\label{Ex:BetterBound}
    We revisit the equation system from Example~\ref{Ex::VertParamBound} and~\ref{Ex::SubDivision}. The kernel of the matrix 
    \[N_{\Delta_2} = \begin{pmatrix}
        -3 & -1 & 2\\
        -1 & -1 & 1
    \end{pmatrix}\]
    is spanned by the vector $(1,1,2)$. Thus, $\Delta_2$ is a positively decorated $2$-simplex. A simple computation shows that this is the only positively decorated $2$-simplex in $\Gamma_h$. Thus, Theorem~\ref{Thm:Bihan} gives that the equation system  \eqref{Eq::BihanViroSystem}, for this choice of $N,A$ and $h$, has at least one positive real solution for small values of $t > 0$. 
    
    Note that using Theorem~\ref{Prop::LowerBoundVertic}, we showed  in Example~\ref{Ex::VertParamBound} that this system has at least two positive real solutions for sufficiently small $t > 0$.
\end{example}

In Example \ref{Ex:BetterBound}, we presented a case where Theorem~\ref{Prop::LowerBoundVertic} offers a strictly larger lower bound than Theorem~\ref{Thm:Bihan}. We conclude this section with a result showing that the bound given by Theorem~\ref{Prop::LowerBoundVertic} is always at least as good as the one provided by Theorem~\ref{Thm:Bihan}.
For technical reasons, we assume that $h$ has rational coordinates in order to prove this statement.

\begin{proposition}
\label{Prop:InjMapPosDec}
Let  $A  \in \Z^{n \times r},\, N \in \mathbb{R}^{n \times r}$ be matrices of rank $n$ and $h \in \mathbb{Q}^r$. Let $\mathcal{R} = H(\mathbb{R}_{\text{an}^*})$ denote the Hardy field and consider $N \diag(t^h) \in \mathcal{R}^{n \times r}$. The map 
      \begin{align*}
          \Big\{ \text{positively decorated } n\text{-simplex in } \Gamma_h \Big\} &\to \Trop^+\Big(\ker\big( N \diag(t^h)  \big)\cap Y_A \Big) \\
        \Delta \quad &\mapsto \quad A^\top v,
      \end{align*}
     is injective where $v \in \mathbb{R}^n$ is the unique vector satisfying
      \[
\Delta = \big\{ \alpha_k \mid  (v,1) \cdot (\alpha_k,h_k) = \min_{j \in [r]} (v,1) \cdot (\alpha_j,h_j) \big\} .\]
\end{proposition}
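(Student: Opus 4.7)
The plan is to prove two things: that the assignment $\Delta \mapsto A^\top v$ is well-defined (the image lies in $\Trop^+(\ker(N\diag(t^h)) \cap Y_A)$) and that it is injective. The well-definedness step is the nontrivial one, and will be obtained via a Hensel-type lifting of the non-degenerate positive solution supplied by the positive decoration assumption.

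For well-definedness, let $\Delta$ be a positively decorated $n$-simplex with unique witness $v \in \mathbb{R}^n$, and let $F = (f_1, \dots, f_n)$ denote the polynomial system with $f_i = \sum_j n_{ij}\, t^{h_j} x^{\alpha_j}$. Since $A^\top$ induces a bijection between $\Trop^+(V(F))$ and $\Trop^+(\ker(N\diag(t^h)) \cap Y_A)$ by \Cref{lemma: monomial map is bijective on positive part}(ii), it suffices to show $v \in \Trop^+(V(F))$. The coefficient of $x^{\alpha_j}$ in $f_i$ is $n_{ij}\, t^{h_j}$ and has valuation $h_j$, and by the defining property of the witness $v$, the quantity $h_j + v\cdot \alpha_j$ attains its minimum precisely over $\alpha_j \in \Delta$. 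Thus
\[\initial_v(f_i) \;=\; \sum_{\alpha_j \in \Delta} n_{ij}\, x^{\alpha_j}, \qquad i = 1, \dots, n,\]
so the initial system at $v$ coincides with $N_\Delta x^{A_\Delta} = 0$. Applying \Cref{Lemma::SubSolution} gives a unique non-degenerate positive solution $x^* \in \mathbb{R}^n_{>0}$; the non-degeneracy ensures that the Jacobian of the initial system has full rank $n$ at $x^*$, so $x^*$ is a smooth isolated positive point of $V(\initial_v(f_1), \dots, \initial_v(f_n))$. The main technical point will be to verify that $x^*$ is likewise a smooth point of the scheme-theoretic initial degeneration $\initial_v V(F)$; a priori the latter could be a proper closed subscheme of the former. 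To handle this I would substitute $x = t^v u$ and rescale by $t^{-W}$, where $W$ is the common minimum of $h_j + v \cdot \alpha_j$ over $\alpha_j \in \Delta$. The resulting system has coefficients in the valuation ring and reduces modulo the maximal ideal exactly to $N_\Delta u^{A_\Delta} = 0$, so Hensel's lemma over the complete local ring lifts $x^*$ to a genuine solution of the rescaled system, proving $x^* \in \initial_v V(F)$. Invoking \Cref{Lemma: Hensel lifting} then yields $v \in \Trop^+(V(F))$, and hence $A^\top v \in \Trop^+(\ker(N\diag(t^h)) \cap Y_A)$.

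For injectivity, since $A$ has rank $n$ the linear map $A^\top \colon \mathbb{R}^n \to \mathbb{R}^r$ is injective, so it is enough to show that distinct positively decorated $n$-simplices carry distinct witnesses. Given an $n$-simplex $\Delta = \{\alpha_{i_1}, \dots, \alpha_{i_{n+1}}\}$ with $n+1$ affinely independent elements, the conditions $(v, 1) \cdot (\alpha_{i_k} - \alpha_{i_1}, h_{i_k} - h_{i_1}) = 0$ for $k = 2, \dots, n+1$ form $n$ linearly independent equations in $v$ and uniquely determine it; conversely this $v$ recovers $\Delta$ as the set of $\alpha_j$ minimizing $h_j + v \cdot \alpha_j$. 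Therefore $\Delta \mapsto v_\Delta \mapsto A^\top v_\Delta$ is injective, completing the proof.
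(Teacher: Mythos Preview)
Your argument is correct in outline and reaches the conclusion, but it follows a genuinely different route from the paper's own proof.

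The paper does not use \Cref{Lemma: Hensel lifting} here at all. Instead it exploits the hypothesis $h \in \mathbb{Q}^r$ explicitly: since the lifted points $(\alpha_j, h_j)$ are rational, the witness $v$ lies in $\mathbb{Q}^n$, and after scaling by a suitable integer $\lambda$ the substituted and rescaled system $F(x,t)$ becomes \emph{polynomial} in $t$. The paper then applies the Real Analytic Implicit Function Theorem at $t=0$ to produce an analytic solution curve $g(t)$, observes that $g(t^{1/\lambda})$ is a locally convergent generalized Puiseux series and hence a genuine element of the Hardy field, and concludes that $t^v \ast g(t^{1/\lambda})$ is a point of $V(F)\cap \mathcal{R}^n_{>0}$ with valuation $v$. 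The construction is explicit and analytic rather than algebraic.

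Your approach replaces this analytic construction by an abstract Hensel lift of the system $(\tilde f_1,\dots,\tilde f_n)$ over the valuation ring, which is shorter and, notably, never uses the rationality of $h$. Two small remarks are in order. First, the valuation ring $\mathcal{R}^\circ$ of the Hardy field is not complete but only Henselian (this holds for any convex valuation on a real closed field, and is exactly how the paper's proof of \Cref{Lemma: Hensel lifting} proceeds after passing to the completion); you should say ``Henselian'' rather than ``complete'', or else justify the passage to completion. Second, once Hensel's lemma hands you a lift $u^* \in (\mathcal{R}^\circ)^n$ with $\overline{u^*} = x^* \in \mathbb{R}^n_{>0}$, the point $t^v \ast u^*$ is already a positive solution of the original system with valuation $v$, so $v \in \Trop^+(V(F))$ directly; the subsequent appeal to \Cref{Lemma: Hensel lifting} is redundant. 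With these two adjustments your argument is complete, and arguably cleaner than the paper's since it sidesteps both the rationality assumption and the analytic machinery.
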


\begin{proof}
   We follow the proof of \cite[Theorem 3.4]{PolyhedralMethodSparsePositive}. Since $\Conv( (\alpha_k, h_k) \mid \alpha_k \in \Delta )$ is a polytope with rational vertices, the inner normal vector $(v,1)$ satisfies $v \in \mathbb{Q}^n$. Choose $\lambda \in \mathbb{Z}_{>0}$ such that $\lambda \big( ( v,1)\cdot (\alpha_j,  h_j) \big)\in \mathbb{Z}$ for all $j \in [r]$ and consider the polynomial 
    \[ f_{i,t}(x) := \sum_{j\in [r]} c_{ij}t^{\lambda  h_j}x^{\alpha_j}.\]
    Define $\beta :=  \min_{j \in [r]}  (\lambda v,\lambda) \cdot (\alpha_j, h_j) $. From \eqref{Eq::DefSimplexMin}, it follows that
    \[F_i(x,t) := t^{-\beta} f_{i,t}(t^{\lambda v} \ast x) = \sum_{j \in \Delta } c_{ij}x^{\alpha_j} + \sum_{j \in [r] \setminus \Delta } c_{ij}t^{(\lambda v,\lambda)\cdot (\alpha_j,  h_j) - \beta} x^{\alpha_j}. \]
Since the exponent of $t$ in the second sum is a strictly positive integer, the function
\[ F\colon \mathbb{R}^n_{>0} \times \mathbb{R} \to \mathbb{R}^n, \quad (x,t) \mapsto \big(F_1(x,t),\dots,F_n(x,t)\big)\]
is polynomial in $t$. In particular, $F$ is a real-analytic function on $\mathbb{R}^n_{>0} \times \mathbb{R}$. Since $\Delta$ is positively decorated, from Lemma~\ref{Lemma::SubSolution}, it follows that $F(x,0) = 0$ has a unique positive solution $z \in \mathbb{R}^n_{>0}$ such that the Jacobian matrix $\big(\tfrac{\partial F_j}{\partial x_i}(z,0)\big)_{i,j}$ is invertible. By the Real Analytic Implicit Function Theorem \cite[Theorem 1.8.3]{krantz2002primer}, there exists an open interval $(-\epsilon,\epsilon )\subseteq \mathbb{R}$ and a real analytic function $g\colon (-\epsilon,\epsilon ) \to \mathbb{R}^n$ such that $g(0) = z$, and $F(g(t),t) = 0$ holds for all $t \in (-\epsilon,\epsilon )$.  Consider the coordinate change
\[ \psi\colon [0,\epsilon^{\lambda}) \to [0,\epsilon), \quad t \mapsto t^{\tfrac{1}{\lambda}}. \]
Thus, for  $t \in (0,\epsilon^{\tfrac{1}{\lambda}})$, we have $F(g(\psi(t)),\psi(t)) = 0$. Since $g$ is real analytic, for all $i \in [n]$, we have
    \[ g_i(\psi(s)) = g_i(0) + \tfrac{g_i'(0)}{1!} s^{\tfrac{1}{\lambda}} + \tfrac{g_i''(0)}{2!}s^{\tfrac{2}{\lambda}}+ \dots ,\]
    cf. \cite[Corollary 1.1.10]{krantz2002primer}, which is an absolutely locally convergent generalized real Puiseux series around $0$. Since $g_i(\psi(0)) = z_i > 0$, it follows that $[t^{ v_i} g_i(\psi(t)) ]>0$ and $\val([t^{ v_i} g_i(\psi(t) ]) = v_i$ for $i \in[n]$.  
    
From \cite[Proposition 5.9]{Coste::oMinimal}, it follows that $[t^{v} \ast g(\psi(t))]$ is a solution of $N \diag( t^{ h}) x^A = 0$ in $\mathcal{R}_{>0}^n$. In particular
  \[  v = \val([t^{v } \ast g(\psi(t))]) \in   \Trop^+\big( \big\{ x \in \mathcal{R}_{>0}^n \mid N \diag( t^{ h}) x^A = 0 \big\} \big) \]
    Now, the statement follows from Lemma \ref{Cor::AtrBijective}(ii). 
\end{proof}

\begin{example}
\label{Eq:StircExample}
Consider again our running example. For $h=(0,0,0,0,-1)$, we computed in Example~\ref{Ex::SubDivision} and \ref{Ex:BetterBound} that $\Delta_2$ is the only positively decorated simplex and the corresponding inner normal vector is given by $(v,1)=(1,0,1)$. From Proposition~\ref{Prop:InjMapPosDec}, it follows that  $A^\top(1,0) = (0,2,0,2,1) \in \big(\Trop^+(\ker(N)) - h \big) \cap \rowspan(A)$. Note that in Example~\ref{Ex::VertParamBound} we showed that this tropical intersection contains an additional point  $(0,-1,-1,-2,-1)$.
\end{example}

\subsection{Chemical reaction networks}
\label{Sec:CRNT}
In this section, we demonstrate how \Cref{cor: generalized Viro patchworking} can be used to analyze the number of positive steady states of a reaction network. For a comprehensive overview of reaction networks, we refer to~\cite{DickensteinInvitation,FeliuRoleofAlg}. Here, we consider the hybrid histidine kinase network, which has been studied in~\cite{HHK,PLOS_IdParaRegions}.
\begin{align*}
\mathrm{HK}_{00} \xrightarrow{\kappa_{1}} \mathrm{HK}_{{\rm p} 0} &\xrightarrow{\kappa_{2}} \mathrm{HK}_{0{\rm p} } \xrightarrow {\kappa_{3}} \mathrm{HK}_{{\rm p} {\rm p} } &
\mathrm{HK}_{0{\rm p} } + \mathrm{RR} &\xrightarrow{\kappa_{4}} \mathrm{HK}_{00} + \mathrm{RR}_{{\rm p} } \\
\mathrm{RR}_{{\rm p} } &\xrightarrow{\kappa_{6}} \mathrm{RR}
 & \mathrm{HK}_{{\rm p} {\rm p} } + \mathrm{RR} &\xrightarrow{\kappa_{5}} \mathrm{HK}_{{\rm p} 0} + \mathrm{RR}_{{\rm p} }\end{align*}
This network describes the phosphorylation of a histidine kinase $\mathrm{HK}$ with two ordered phosphorylation sites and a  response regulator $\mathrm{RR}$. There are in total 6  \emph{species} in the network, $\mathrm{HK}_{00}, \mathrm{HK}_{{\rm p}0},\mathrm{HK}_{0{\rm p}},\mathrm{HK}_{{\rm pp}}, \mathrm{RR}, \mathrm{RR}_{{\rm p }}$. The subscript ${\rm p}$ indicates the phosphorylation status of the sites. Under the assumption of \emph{mass-action kinetics}, the evolution of the species concentrations is modeled by the following system of ordinary differential equations:
        \begin{equation}
        \label{ODE}
                    \begin{aligned}
      &\dot{x}_1 = -\kappa_1 x_1 + \kappa_4 x_3x_5, &\dot{x}_2 = \kappa_1 x_1 - \kappa_2 x_2 + \kappa_5 x_4x_5,   \\ 
  &\dot{x}_3 = \kappa_2 x_2 - \kappa_3 x_3 -\kappa_4 x_3x_5,  &\dot{x}_4 = \kappa_3 x_3 - \kappa_5 x_4x_5,    \\
 &\dot{x}_5 = \kappa_6 x_6 - \kappa_4x_3x_5  - \kappa_5 x_4x_5,   &\dot{x}_6 = -\kappa_6 x_6 + \kappa_4 x_3x_5 +\kappa_5 x_4x_5,  
    \end{aligned}
        \end{equation}
where $x_1 = [\mathrm{HK}_{00}], x_2 =  [\mathrm{HK}_{{\rm p}0}], x_3 = [\mathrm{HK}_{0{\rm p}}], x_4 = [\mathrm{HK}_{{\rm pp}}], x_5 =  [\mathrm{RR}], x_6 =  [\mathrm{RR}_{{\rm p }}]$ denote the concentrations and $\kappa_1, \dots , \kappa_6$ are positive real parameters, called \emph{reaction rate constants}. The set of \emph{positive steady states} of the ODE system~\eqref{ODE} is given by the solutions in $\mathbb{R}^6_{>0}$ of the polynomial equation system obtained by setting the left-hand side of~\eqref{ODE} to zero. This polynomial system can be written as
\[ N \diag(\kappa) x^B = 0,\]
where \begin{align*}
        N = \begin{pNiceArray}{cccccc}
        -1  &  \phantom{-} 0 &   \phantom{-} 0 &   \phantom{-} 1 &   \phantom{-} 0 &  \phantom{-} 0\\
 \phantom{-} 1  &-1&    \phantom{-} 0&    \phantom{-} 0&    \phantom{-} 1&   \phantom{-}  0\\
   \phantom{-} 0 &  \phantom{-}  1 & -1 & -1&   \phantom{-}  0 &  \phantom{-} 0\\
  \phantom{-}  0&   \phantom{-}  0&    \phantom{-} 1 &   \phantom{-} 0&  -1  &  \phantom{-} 0\\
  \phantom{-}  0 &   \phantom{-} 0  & \phantom{-}  0  &-1  &-1 &  \phantom{-}  1\\
   \phantom{-} 0  &  \phantom{-} 0  &  \phantom{-} 0 &   \phantom{-} 1 &   \phantom{-} 1 & -1
     \end{pNiceArray} \in \mathbb{Z}^{6\times 6},   B = \begin{pNiceArray}{cccccc}
     1&  0&  0&  0&  0&  0\\
 0 & 1&  0&  0 & 0 & 0\\
 0 & 0 & 1 & 1 & 0 & 0\\
 0 & 0 & 0 & 0 & 1 & 0\\
 0 & 0 & 0 & 1 & 1 & 0\\
 0 & 0&  0 & 0 & 0 & 1
     \end{pNiceArray} \in \mathbb{Z}^{6\times 6}.
    \end{align*}
The matrix $N$ is called the \emph{stoichiometric matrix}, $B$ is called the \emph{reactant matrix}.

The ODE system~\eqref{ODE} is forward invariant on affine translates of the image of $N$ intersected with $\mathbb{R}^n_{\geq0}$, called \emph{stoichiometric compatibility classes} \cite{Sonntag:ForwardInv,ForInv_Volpert}. To describe these sets, we choose a full rank matrix 
\begin{align*}
      W =  \begin{pNiceArray}{cccccc}
       1& 1& 1& 1& 0& 0\\
     0& 0& 0& 0& 1& 1
     \end{pNiceArray} \in \mathbb{R}^{2\times 6},
    \end{align*}
whose rows form a basis of the left-kernel of $N$. Thus, we can write every stoichiometric compatibility class as
\[  \{ x \in \mathbb{R}^6_{\geq0} \mid Wx = T \},\]
where $T \in \mathbb{R}^2$ is an additional parameter, called \emph{total concentration vector}. Under mild assumptions on the reaction network, for generic choices of the parameters $\kappa,T$, the equation system
\begin{align}
\label{Eq:SteadyStatecConv}
N\diag(\kappa) x^B = 0, \qquad Wx = T
\end{align}
has a finite number of solutions in $(\mathbb{C}^*)^6$  \cite{FeliuDim}. If, for some choice of the parameters, \eqref{Eq:SteadyStatecConv} has at least two positive real solutions, the network is said to be \emph{multistationary}. It is known that for our example, the hybrid histidine kinase network, there are choices of the parameters where \eqref{Eq:SteadyStatecConv} has three positive solutions~\cite{HHK,PLOS_IdParaRegions}. In general, determining whether a reaction network is multistationary and identifying parameters for which \eqref{Eq:SteadyStatecConv} has many positive solutions is an active area of research in reaction network theory \cite{shiu:survey, PLOS_IdParaRegions, bihan:regions2,bihan:regions3,bihan:regions,MultStrucRN,conradi:cones,MultiSite_Phosph_Dicken,markev}.

In the following, we discuss how this problem can be addressed using tropical geometry. We consider the matrices
\begin{align*}
 {\footnotesize   C := \begin{pNiceArray}{c|c|c}
     N & {\bf 0}_{6 \times 6} & {\bf 0}_{ 6\times 1} \\
     \hdottedline
    {\bf 0}_{2\times 6} & W & -T
 \end{pNiceArray} \in \mathbb{R}^{8\times 13},  \quad A := \begin{pNiceArray}{c|c|c}
     B & \Id_{6\times 6} & {\bf 0}_{6\times 1} \\
 \end{pNiceArray} \in \mathbb{R}^{6\times 13}, \quad K = \begin{pmatrix}
     \kappa\\
     \mathds{1}
 \end{pmatrix} \in \mathbb{R}^{13}}.
\end{align*}
Here, we denote by ${\bf 0}_{k \times \ell}$ matrices whose entries are zero and by  $\mathds{1}$ the vector of size $7$ whose entries are equal to one. Using this notation, we rewrite~\eqref{Eq:SteadyStatecConv} as 
\begin{align}
\label{Eq:RewrittenSystem}
 C \diag(K) x^A = 0
\end{align}
We fix $T = (10,20)$ and set $\kappa = t^h$, for $h = (7, -6, -2, -3, -3, 3)$ and $t > 0$. 
The equation system \eqref{Eq:RewrittenSystem} is tropically transverse and we have that
\begin{align}
\label{Eq:IntersGivingLowerBound}
 \big(\Trop^+\big( \ker (C) \big) - (h,{\bf 0}_{1 \times 7}) \big) \cap \rowspan(A)
\end{align}
consists of three points. Thus, by  \Cref{Prop::LowerBoundVertic}, for sufficiently small values of $t > 0$, the hybrid histidine kinase network has at least three positive steady states for the parameters $\kappa = t^h$ and $T=(10,20)$. For the computation, we used the following \texttt{Oscar.jl} \cite{OSCARbook} code:

\begin{verbatim}
using Oscar

R, x = polynomial_ring(QQ, "x"=>1:13)
TropL_positive = positive_tropical_variety(ideal(C*x),tropical_semiring_map(QQ))
rowspan_A = convex_hull(Vector{Int}(zeros(13)), nothing, [ A[i,:] for i in 1:6])

h = Vector{Int}([7, -6, -2, -3, -3, 3, 0, 0, 0, 0, 0, 0, 0])
pos_intersection_points = []
for max_cone in maximal_polyhedra(TropL_positive)
    intersection_point = intersect(max_cone + convex_hull(-h),rowspan_A)
    if dim(intersection_point) == 0
        push!(pos_intersection_points,intersection_point)
    end
end
\end{verbatim}
The above code simply iterates through all maximal cones of $\Trop^+\big( \ker (C) \big) - (h,{\bf 0}_{1 \times 7})$ and computes their intersections with  $\rowspan(A)$. If the intersection is zero-dimensional, then the two tropical varieties intersect transversally at that point.

\begin{remark}
    We are not aware of any effective method for finding the $h$ that yields the highest possible lower bound. In our experiments, we picked $h$ randomly and computed the number of intersection points in~\eqref{Eq:IntersGivingLowerBound}. Developing an effective algorithm to maximize the number of intersection points is an interesting problem, but it lies beyond the scope of this paper.
\end{remark}

\color{black}

\paragraph{\textbf{Acknowledgements. }}
The authors thank Yue Ren for useful discussions and for pointing us to \cite[Lemma 2.7]{Bossinger2017}. MLT thanks Elisenda Feliu for the inspiring discussions and comments on the manuscript. MLT was supported by the European Union under the Grant Agreement no. 101044561, POSALG. Views and opinions expressed are those of the author(s) only and do not necessarily reflect those of the European Union or European Research Council (ERC). Neither the European Union nor ERC can be held responsible for them.

{\small

}

\appendix
\section{o-minimal geometry}
\label{AppendixA}
In this appendix, we provide more detail on the construction of the Hardy field. Our exposition closely follows that in \cite{Alessandrini::LogLimit,Ebbinghaus::Logic}.
 
A set of \emph{symbols} $S$ consists of three types of symbols: \emph{$n$-ary relation symbols}, \emph{$n$-ary function symbols}, and \emph{constants} \cite[Chapter II, Definition 2.1]{Ebbinghaus::Logic}. We denote by $L_S$ the corresponding \emph{first order language} \cite[Chapter II, Definition 3.2]{Ebbinghaus::Logic}. An $S$-structure $(R,\mathfrak{a})$ is a pair such that $R$ is a non-empty set and $\mathfrak{a}$ is a map on $S$ such that for every $n$-ary relation symbol $r \in S$, $\mathfrak{a}(r)$ is an $n$-ary relation on $R$, for every $n$-ary function symbol $f \in S$, $\mathfrak{a}(f)\colon R^n \to R$ is function, and for every constants $c \in S$, $\mathfrak{a}(c) \in R$. An \emph{expansion} of an $S$-structure $(R,\mathfrak{a})$ is an  $S'$-structure $(R,\mathfrak{a}')$  such that $S \subset S'$ and $\mathfrak{a}'(s) = \mathfrak{a}(s)$ for all $s \in S$ \cite[Chapter III, Definition 4.7]{Ebbinghaus::Logic}.

For our purposes, the most important example of an $S$-structure is $\overline{\mathbb{R}} = (\mathbb{R},\mathfrak{a})$, where $S = \{ <,+,-,\cdot,0,1\}$ and $\mathfrak{a}$ is the usual interpretation of the symbols in $S$. We are also interested in the expansion $\mathbb{R}_{\text{an}^*}$ of  $\overline{\mathbb{R}}$ where the set of symbols $S'$ contains additionally function symbols given by locally convergent generalized real Puiseaux series~\cite{Dries::GenPowerSeries}.

Given an $S$-structure $(R,\mathfrak{a})$, a subset of $R^n$ is \emph{definable} if it is given by an $L_S$-formula $\varphi(x_1,\dots,x_n,y_1,\dots,y_m)$ and by parameters $a_1,\dots,a_m \in R$. A function is \emph{definable} if its graph is definable. An  $S$-structure $(R,\mathfrak{a})$ is \emph{o-minimal} if the definable subsets of $R$ are the finite unions of points and intervals \cite[Definition 1.4]{Coste::oMinimal}. For example, for $S = \{ <,+,-,\cdot,0,1\}$ the $S$-structure $\overline{\mathbb{R}}$ is o-minimal, the definable sets (resp. functions) are exactly the semi-algebraic sets (resp. functions).

In  \cite[Theorem A]{Dries::GenPowerSeries}, it has been showed that $\mathbb{R}_{\text{an}^*}$ is o-minimal, moreover the definable functions have the following form. 

\begin{theorem}
    \cite[Theorem B]{Dries::GenPowerSeries} Let $\epsilon >0$. A function $f\colon (0,\epsilon) \to \mathbb{R}$ is definable in $\mathbb{R}_{\text{an}^*}$ if and only if there exists $\delta \in (0,\epsilon)$ and a locally convergent generalized real Puiseaux series $x(t)$ as in \eqref{Eq::GenRealPuisSeries} such that $f(t) = x(t)$ for all $t \in (0,\delta)$.
\end{theorem}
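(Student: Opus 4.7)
The plan is to prove the two implications separately, since their flavors are very different. The backward direction is essentially a definitional check: by construction, the language of $\mathbb{R}_{\text{an}^*}$ is obtained from $\{<,+,-,\cdot,0,1\}$ by adjoining a function symbol for every locally convergent generalized real Puiseux series (interpreted in the natural way). If $f(t) = x(t)$ on $(0,\delta)$ for such a series $x(t)$, then $f|_{(0,\delta)}$ is given by a single $L_{S'}$-term, and the set $(0,\delta)$ together with its complement in $(0,\epsilon)$ are semi-algebraic, hence definable. Extending $f$ by an arbitrary definable value on $[\delta,\epsilon)$ yields an explicit $L_{S'}$-definition of the whole function.

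The forward direction is the substantive content, and is exactly Theorem B of \cite{Dries::GenPowerSeries}. The strategy rests on three ingredients: (a) model completeness (or quantifier elimination in an appropriately expanded language) for $\mathbb{R}_{\text{an}^*}$, as established in \cite{Dries::GenPowerSeries}; (b) the cell decomposition theorem for o-minimal structures, which allows me to replace the given definable $f\colon(0,\epsilon)\to\mathbb{R}$ by a single term representation on a sufficiently small interval $(0,\delta)$; and (c) a structural analysis of such terms, showing that their germ at $0^+$ lies in the class of locally convergent generalized real Puiseux series. Concretely, I would argue by induction on the complexity of the term $t(x)$. The base cases (constants, the variable $x$, and the Puiseux-series function symbols) are immediate. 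For sums, products, and the order relation, closure of the class of locally convergent generalized real Puiseux series under these operations has to be invoked.

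The main obstacle is the step involving composition and inversion of generalized power series. Composition of two locally convergent series can fail to be literally of the form \eqref{Eq::GenRealPuisSeries}, and division requires extracting the leading monomial and inverting in the local ring of series with support in $\mathbb{R}_{\geq 0}$. The technical heart of \cite{Dries::GenPowerSeries} consists of Weierstrass-preparation-type lemmas that keep one inside the class of convergent generalized series and control the convergence radius after each operation; I would import those lemmas rather than re-derive them. Together with uniqueness of the asymptotic expansion of an o-minimally definable germ at $0^+$, these lemmas pin down a single $x(t)$ with $f(t) = x(t)$ on some $(0,\delta)$, completing the proof.
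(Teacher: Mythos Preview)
The paper does not give a proof of this statement at all: it is quoted verbatim as \cite[Theorem B]{Dries::GenPowerSeries} and used as a black box to describe the elements of the Hardy field. There is therefore nothing to compare your proposal against; anything you write is already more than the paper provides, and your sketch of the forward direction (quantifier elimination / model completeness, cell decomposition down to a term on a small interval, then the Weierstrass-type closure lemmas of \cite{Dries::GenPowerSeries}) is a fair summary of what the cited reference actually does.

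One genuine issue with your backward direction: you argue that $f|_{(0,\delta)}$ is definable and then say ``extending $f$ by an arbitrary definable value on $[\delta,\epsilon)$ yields an explicit $L_{S'}$-definition of the whole function.'' But $f$ is already given on all of $(0,\epsilon)$; you are not free to redefine it on $[\delta,\epsilon)$. As literally stated in the paper the ``if'' direction is false (take $f$ to agree with a convergent series near $0$ and be non-definable elsewhere). The intended statement is about germs at $0^+$, which is exactly what is needed for Definition~\ref{Def:HardyField}, and your argument is correct at that level; just be aware that what you wrote proves definability of a representative of the germ, not of the original $f$.
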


Thus, we can think of definable functions in $\mathbb{R}_{\text{an}^*}$ as locally convergent generalized real Puiseaux. Now, we recall the definition of the Hardy field \cite[Section 4.1]{Alessandrini::LogLimit}, \cite[Section 5.3]{Coste::oMinimal}.

\begin{definition}
\label{Def:HardyField}
    Two functions $f,g\colon (0,\epsilon) \to \mathbb{R}$ which are definable in $\mathbb{R}_{\text{an}^*}$ are equivalent if and only if there exists $\delta \in (0,\epsilon)$ such that $f(t) = g(t)$ for all $t \in (0,\delta)$. The Hardy field $H( \mathbb{R}_{\text{an}^*})$ is defined as the set of equivalence classes under this equivalence relation.
\end{definition}

\end{document}